\DeclareOldFontCommand{\sl}{\normalfont\slshape}{\@nomath\sl}
\DeclareOldFontCommand{\rm}{\normalfont\rmfamily}{\mathrm}
\DeclarePairedDelimiter{\ceil}{\lceil}{\rceil}
\DeclarePairedDelimiter{\floor}{\lfloor}{\rfloor}
\DeclarePairedDelimiter{\inn}{\langle}{\rangle}
\theoremstyle{definition}
\newcounter{counter}
\numberwithin{counter}{section}
\newtheorem{theorem}[counter]{Theorem}
\newtheorem{proposition}[counter]{Proposition}
\newtheorem{definition}[counter]{Definition}
\newtheorem{lemma}[counter]{Lemma}
\newtheorem{corollary}[counter]{Corollary}
\newtheorem{example}[counter]{Example}
\newtheorem*{theorem*}{Theorem}
\newtheorem*{lemma*}{Lemma}
\newcommand{\R}{\mathbb{R}}
\newcommand{\N}{\mathbb{N}}
\newcommand{\cl}[1]{\overline{#1}}
\newcommand{\id}{\text{id}}
\newcommand{\sa}{\text{sa}}
\newcommand{\mult}{\circ}
\newcommand{\commu}{\lvert}
\begin{document}

\title{Three characterisations of the sequential product}
\author{John van de Wetering \\
Radboud University Nijmegen, Netherlands \\
\texttt{wetering@cs.ru.nl}
}

\date{\today}


\maketitle

\begin{abstract}
\noindent It has already been established that the properties required of an abstract sequential product as introduced by Gudder and Greechie are not enough to characterise the standard sequential product $a\mult b = \sqrt{a}b\sqrt{a}$ on an operator algebra. We give three additional properties, each of which characterises the standard sequential product on either a von Neumann algebra or a Euclidean Jordan algebra. These properties are (1) invariance under application of unital order isomorphisms, (2) symmetry of the sequential product with respect to a certain inner product, and (3) preservation of invertibility of the effects. To give these characterisations we first have to study \emph{convex $\sigma$-sequential effect algebras}. We show that these objects correspond to unit intervals of spectral order unit spaces with a homogeneous positive cone.
\end{abstract}

\section{Introduction}
Measuring a state in quantum theory can change it in a non-trivial way. As a result, the order of measurements done on the same system is relevant. We will study \emph{effects}, a type of measurement that has a binary outcome denoting the success of the measurement. Supposing we have two effects $a$ and $b$, we can consider the combined effect $a\mult b$ that is successful if and only if first $a$ is measured and successful and afterwards $b$ is measured and successful. We will call this measurement the \emph{sequential product} of $a$ and $b$. In general $a\mult b\neq b\mult a$. In quantum theory, effects are represented by certain elements of a C$^*$-algebra and the sequential product is defined as $a\mult b:=\sqrt{a}b\sqrt{a}$. We have $a\mult b = a\mult b$ if and only if $a$ and $b$ commute \cite{gudder2002sequentially}.

An abstract formalism for studying spaces of effects with a sequential product was given by Gudder and Greechie in the form of a \emph{sequential effect algebra} \cite{gudder2002sequential}. This is a set equipped with a partial addition representing fuzzy conjunction, a complement representing negation, and a binary operation representing the sequential product. A sequential effect algebra has been shown to exhibit some of the same properties present in the set of quantum effects \cite{gudder2002sequential,gudder2018convex}.

The goal of this paper is twofold. First to develop the theory of a particular class of sequential effect algebras known as \emph{convex $\sigma$-effect algebras}, which we dub \emph{sequential effect spaces} to stress their similarity to vector spaces, that in addition to the aforementioned structure also allow convex combinations and countable infima of effects. Second, to use these results to give three new characterisations of the standard sequential product on a variety of operator algebras. In particular our results hold for the set of bounded operators on a Hilbert space and for \emph{Euclidean Jordan algebras}, a class of operator algebras containing for instances spaces of operators on a real or quaternionic Hilbert space \cite{jordan1933}.

It was shown in \cite{weihua2009uniqueness} that the properties required of an abstract sequential product as defined by Gudder and Greechie are not enough to characterise the standard sequential product $a\mult b= \sqrt{a}b\sqrt{a}$. There do however exist characterisations based on a related set of axioms \cite{gudder2008characterization,westerbaan2016universal}. We will add to these efforts by giving characterisations based on a variety of different ideas, including an order-theoretic property, symmetry with respect to an inner product, and an algebraic property. Informally stated:

\begin{quote}
	An abstract sequential product $a\mult b$ is the standard sequential product if any of the following additional conditions is satisfied:
	\begin{itemize}
		\item For all unital order isomorphisms $\Phi$ we have $\Phi(a\mult b) = \Phi(a)\mult\Phi(b)$.
		\item When $a$ and $b$ are invertible their product $a\mult b$ is as well and $(a\mult b)^{-1} = a^{-1}\mult b^{-1}$.
		\item The space has an inner product such that $\inn{a\mult b,c} = \inn{b,a\mult c}$.
	\end{itemize}
\end{quote}
\noindent \textbf{Note:} The setting of each of these characterisations is slightly different. For details we refer to the theorems stated in the body of the paper (\ref{theor:uniqueinvariantproduct}, \ref{theor:uniquesymmetricproduct}, and \ref{theor:uniquequadraticproduct}).

To achieve these results we will develop the theory of convex $\sigma$-sequential effect algebras. In particular we will show that they correspond to unit intervals of order unit spaces which allow spectral decompositions in terms of the sharp effects (i.e.\ effects satisfying $a\mult a =a$, like the projections in quantum theory), and that have a positive cone that is \emph{homogeneous} (see theorem \ref{theor:homogen}).

The next section will give the basic definitions concerning sequential effect algebras and will introduce the main examples. In section \ref{sec:basicprops} some basic results concerning sequential effect algebras are summarized and proved. In section \ref{sec:structureofSES} we look into the structure of such spaces in more depth. We in particular give a characterisation of commutative sequential effect space that in turn helps us to prove a spectral theorem for general noncommutative sequential effect spaces. Finally in section \ref{sec:uniqueness} we use these results to give the three new characterisations of the sequential product on von Neumann algebras and Euclidean Jordan algebras.

\section{Definitions}
We will work towards defining \emph{convex $\sigma$-sequential effect algebras}. In the following definitions it should be kept in mind that the spaces under consideration act as generalisations of the unit interval of a von Neumann algebra or Euclidean Jordan algebra.

\begin{definition}
	An \emph{effect algebra} \cite{foulis1994effect} $(E,+,1,0,\perp)$ is a set $E$ equipped with a partially defined `addition' operation $+$ such that for all $x,y,z \in E$:
	\begin{itemize}
		\item Commutativity: when $x+y$ is defined, $y+x$ is also defined and $x+y=y+x$.
		\item Associativity: when $y+z$ and $x+(y+z)$ are defined then $x+y$ and $(x+y)+z$ are defined and $x+(y+z)=(x+y)+z$.
		\item Zero: the sum $x+0$ is always defined and $x+0=x$.
		\item The zero-one law: when $x+1$ is defined, then $x=0$.
		\item Complementation: for each $x$ there is a unique $y$ such that $x+y=1$. This $y$ is denoted by $x^\perp$ or $1-x$.
	\end{itemize}
\end{definition}
\textbf{Note:} In an effect algebra $(a^\perp)^\perp = a$ and $0^\perp = 1$. An effect algebra is automatically cancellative: if $x+y=x+z$ then $y=z$. As a consequence the relation $x\leq y \iff \exists z: x+z=y$ turns the set into a poset. We have $x\leq y \iff y^\perp\leq x^\perp$ \cite{foulis1994effect}.

On an effect algebra we want to define a sequential product. This should be a binary operation that satisfies properties that would be reasonable to expect from an operation modelling the measurement of one thing after another. The properties set out by Gudder \cite{gudder2002sequential} particularly consider when effects should be \emph{compatible}, that is, when their order of measurement doesn't matter.

\begin{definition}
	A \emph{sequential effect algebra} (SEA) \cite{gudder2002sequential} $(E,+,1,0,\perp,\mult)$ is an effect algebra with an additional (total) \emph{sequential product} operation $\mult$. We denote $a\commu b$ when $a\mult b = b\mult a$ (i.e.\ when $a$ and $b$ are compatible). The sequential product is required to satisfy the following axioms.
	\begin{enumerate}
			\item $a\mult (b+c) = a\mult b + a \mult c$.
			\item $1\mult a = a$.
			\item $a\mult b = 0 \implies b\mult a =0$. In this case we call the effects \emph{orthogonal}.
			\item If $a\commu b$ then $a\commu b^\perp$ and $a\mult (b\mult c) = (a\mult b)\mult c$ for all $c$.
			\item If $c\commu a$ and $c\commu b$ then also $c\commu (a\mult b)$ and if $a+b$ is defined $c\commu (a+b)$.
	\end{enumerate}
	If $a\commu b$ for all $a$ and $b$ we call the algebra \emph{commutative}. We let $L_a:E\rightarrow E$ denote the operator $L_a(b) = a\mult b$.
\end{definition}
\noindent \textbf{Note}: The multiplication operation is in general only additive in the second argument. However, if the algebra is commutative then the operation will of course be additive in both arguments.

For our purposes we require some additional structure on the order of the effect algebras we consider.
\begin{definition}
	A \emph{$\sigma$-effect algebra} \cite{gudder2002sequential} is an effect algebra where each descending sequence of effects has an infinum, i.e.\ $a_1\geq a_2\geq \ldots$ implies that the infimum $\wedge a_i$ exists. A \emph{$\sigma$-SEA} is a $\sigma$-effect algebra that is also a sequential effect algebra such that if $a_1\geq a_2\geq \ldots$ then $b\mult \wedge a_i = \wedge (b\mult a_i)$ and if $b\commu a_i$ for all $i$ then $b\commu \wedge a_i$.
\end{definition}
\noindent\textbf{Note}: Due to the presence of the complement in an effect algebra, a $\sigma$-effect algebra will also have suprema of increasing sequences: whenever $a_1\leq a_2\leq \ldots$ there exists a supremum $\vee a_i$. In this paper we only consider suprema and infima in a $\sigma$-SEA of mutually commuting effects. The definition of a $\sigma$-SEA could therefore be weakened to only requiring the existence of suprema and infima of commuting effects without changing the results of this paper.

When considering measurements of a physical system we can also consider probabilistic combinations of the measurements, e.g.\ by flipping a coin and deciding on the basis of the outcome which measurement to perform. We model this possibility by requiring our effect algebra to have a \emph{convex} structure.

\begin{definition}
	\cite{gudder1999convex} A \emph{convex effect algebra} $E$ is an effect algebra that is also a `module' over the real unit interval $[0,1]$, i.e.\ there is an action $[0,1]\times E\rightarrow E$ denoted by $(r,a)\mapsto ra$ satisfying the following conditions for $r,s\in [0,1]$ and $a,b\in E$:
	\begin{itemize}
		\item If $r+s\leq 1$ then $ra + sa$ is defined and $(r+s)a = ra + sa$.
		\item $(rs)a = r(sa)$.
		\item $0a = 0$ and $1a = a$.
		\item If $a+b$ is defined, then $ra + rb$ is defined and $r(a+b) = ra + rb$.
	\end{itemize}
\end{definition}

\begin{example}
	Let $V$ be a von Neumann algebra and let $E=[0,1]_V$ denote its set of effects, i.e.\ the self-adjoint elements between $0$ and $1$. The unit interval $E$ is a convex effect algebra where addition of $a,b\in E$ is defined whenever $a+b\leq 1$ and it is then equal to the standard addition. The complement of $a$ is given by $a^\perp = 1-a$. It is a sequential effect algebra with the operation $a\mult b = \sqrt{a}b\sqrt{a}$. We will refer to this product as the \emph{standard sequential product} on the space. Since von Neumann algebras are bounded monotone complete it is also a $\sigma$-SEA.
\end{example}

More general examples can be found if we look further than algebras over complex numbers. As an example, let $H$ be a real, complex or quaternionic finite-dimensional Hilbert space. In the same way as above we can then define a sequential product on its space of self-adjoint bounded operators $V=B(H)^\sa$. These spaces are examples of \emph{Euclidean Jordan algebras}.

\begin{definition}
	A vector space $V$ is a \emph{Jordan algebra} if it has a bilinear commutative operation $*$ with unit such that it satisfies the \emph{Jordan identity} $a^2*(b*a)=(a^2*b)*a$ for all $a,b \in V$. We call an element \emph{positive} if it can be written as a square: $a\geq 0$ iff there is a $b$ such that $a=b*b$. If $V$ is also a Hilbert space with $\inn{a*b,c}=\inn{b,a*c}$ and where $a\geq 0$ if and only if $\inn{a,b}\geq 0$ for all $b\geq 0$, then we call $V$ a \emph{Euclidean Jordan algebra} (EJA)\footnote{In the literature, Euclidean Jordan algebras are often required to be finite-dimensional. We do not specify this extra condition as some of our results can be more elegantly stated without this requirement of finite-dimensionality.}. The unit interval of an EJA is a convex effect algebra, and since an EJA is monotone complete it is also a $\sigma$-effect algebra.
\end{definition}
The Jordan multiplication on the space $B(H)^{sa}$ is given by $a*b:=\frac{1}{2}(ab+ba)$ and the inner product is $\inn{a,b}:=\tr(ab)$. It is then straightforward to check that $B(H)^\sa$ is indeed a Euclidean Jordan algebra.

\begin{definition}
	Let $a,b\in V$ be elements of an EJA. The \emph{quadratic representation} of $a$ is the operator $Q_a:V\rightarrow V$ given by $Q_a(b):= 2a*(a*b)-a^2*b$.
\end{definition}
For $V=B(H)^{sa}$ the quadratic representation is simply $Q_a(b) = aba$, it should therefore not be too surprising that the quadratic representation allows us to define a sequential product on a general Euclidean Jordan algebra.
\begin{example}
	Let $E=[0,1]_V$ be the unit interval of an EJA $V$ equipped with the product $a\mult b:= Q_{\sqrt{a}}(b)$, then $E$ is a convex $\sigma$-SEA. We will refer to this product as the \emph{standard sequential product} on the space.
\end{example}
To the author's knowledge, Euclidean Jordan algebras have not been considered before as a setting for sequential effect algebras. A proof showing that the unit intervals of EJA's are indeed sequential effect algebras is therefore included in the appendix.
	
\section{Basic propositions}\label{sec:basicprops}
This section discusses the basic structure inherent in (convex $\sigma$-)sequential effect algebras, concerning especially linearity, sharpness and commutative subalgebras.

\begin{proposition}
	Let $E$ be a sequential effect algebra and let $a,b,c \in E$.
	\begin{itemize}
		\item $a\mult 0 = 0\mult a = 0$ and $a\mult 1=1\mult a = a$.
		\item $a\mult b \leq a$.
		\item If $a\leq b$ then $c\mult a\leq c\mult b$.
	\end{itemize}
\end{proposition}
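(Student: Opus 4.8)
The plan is to dispatch the three bullets in a convenient logical order rather than the order stated, since the monotonicity bullet and the identity $a\mult 1 = a$ are the workhorses on which the remaining claims rest. First I would prove the third bullet, which is essentially immediate from additivity (axiom 1). If $a\le b$ then by definition of the induced order there is a $z$ with $a+z=b$; applying $L_c$ and using axiom 1 gives $c\mult b = c\mult(a+z) = c\mult a + c\mult z$, and the relation $c\mult a + (c\mult z) = c\mult b$ is exactly a witness for $c\mult a \le c\mult b$.

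Next I would clear away the two easy identities of the first bullet. For $a\mult 0$, note that $0+0=0$ is defined (the zero axiom), so additivity yields $a\mult 0 = a\mult(0+0) = a\mult 0 + a\mult 0$; since also $a\mult 0 = a\mult 0 + 0$, cancellativity (available in any effect algebra, per the note following the definition) forces $a\mult 0 = 0$. The orthogonality axiom 3 then immediately upgrades this to $0\mult a = 0$. The identity $1\mult a = a$ is just axiom 2.

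The one genuinely nontrivial step is $a\mult 1 = a$, which is not among the axioms and is where the compatibility machinery must be used. My approach is to establish $a\commu 1$ and then invoke axiom 2. Trivially $a\commu a$, so axiom 4 gives $a\commu a^\perp$. Since the sum $a+a^\perp = 1$ is defined and $a$ is compatible with both summands, axiom 5 yields $a\commu(a+a^\perp)$, that is $a\commu 1$; hence $a\mult 1 = 1\mult a = a$.

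Finally, the second bullet $a\mult b\le a$ follows by combining the pieces: every effect satisfies $b\le 1$, witnessed by $b + b^\perp = 1$, so the monotonicity of $L_a$ proved above together with $a\mult 1 = a$ gives $a\mult b \le a\mult 1 = a$. I expect the compatibility argument for $a\mult 1 = a$ to be the main obstacle, as it is the only point invoking axioms 4 and 5 rather than mere additivity and cancellation; the care needed there is to confirm that both hypotheses $a\commu a$ and $a\commu a^\perp$ are genuinely in hand before applying axiom 5 to the sum $a+a^\perp$.
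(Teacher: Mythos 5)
Your proof is correct, and it follows essentially the standard argument from Gudder--Greechie to which the paper defers (the paper itself only cites \cite{gudder2002sequential} here): monotonicity from additivity, $a\mult 0=0$ by cancellation plus axiom 3, and $a\mult 1=a$ via $a\commu a\Rightarrow a\commu a^\perp\Rightarrow a\commu(a+a^\perp)=1$. No gaps.
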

\begin{proof}
	See \cite{gudder2002sequential}.
\end{proof}

\begin{definition}
	We call a convex effect algebra \emph{Archimedian} when $\frac{1}{2}a\leq \frac{1}{2}(b+\frac{1}{n})$ for all $n\in \N_{>0}$ implies that $a\leq b$.
\end{definition}

\begin{proposition}\label{prop:normalsum} ~
	\begin{itemize}
		\item Let $E$ be an effect algebra and suppose $\wedge a_i$ exists and that $b+a_i$ is defined for all $i$, then $b+\wedge a_i = \wedge (b+a_i)$ and similarly for suprema.
		\item A convex $\sigma$-effect algebra is Archimedian.
	\end{itemize}
\end{proposition}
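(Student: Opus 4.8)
The plan is to derive both bullet points from a single observation about translation by a fixed effect. For the first bullet I would fix $b$ and study the map $\phi_b(x) = b + x$, which I claim is an order isomorphism from the interval $[0,b^\perp]$ onto $[b,1]$. That it is a well-defined bijection follows from the complementation and cancellation laws recorded after the definition of an effect algebra (its inverse sends $y \in [b,1]$ to the unique $x$ with $b + x = y$), and both $\phi_b$ and $\phi_b^{-1}$ are order preserving: from $x \leq x'$ one gets $b + x \leq b + x'$ by associativity, while conversely $b + x \leq b + x'$ yields $x \leq x'$ by cancelling $b$. Since $b + a_i$ is defined exactly when $a_i \leq b^\perp$, each $a_i$, as well as $\wedge a_i$ and $\vee a_i$, lies in the domain $[0,b^\perp]$.

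For suprema the argument is then immediate and self-contained: $b + \vee a_i$ is an upper bound of the $b + a_i$ by monotonicity, and any upper bound $c$ automatically satisfies $c \geq b + a_i \geq b$, so I may write $c = b + c'$ with $c' = c - b$; cancelling $b$ in $b + a_i \leq b + c'$ gives $a_i \leq c'$ for all $i$, hence $\vee a_i \leq c'$ and $b + \vee a_i \leq c$. The infimum direction is where the real work lies, and I expect it to be the main obstacle. The interval $[b,1]$ is the up-set $\{y : y \geq b\}$, so suprema computed in it agree with those in $E$, but infima need not, because a lower bound of the $b + a_i$ need not dominate $b$. The device that saves the argument is to work with the infimum itself rather than with an arbitrary lower bound. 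The element $b + \wedge a_i$ is certainly a lower bound of $\{b + a_i\}$; writing $I$ for the actual infimum (which exists for the descending families to which the result is applied in a $\sigma$-effect algebra, so that the only delicate point of the general statement is this existence), note that $b$ is also a lower bound, whence the greatest lower bound satisfies $I \geq b$. Thus $I = b + \epsilon$ with $\epsilon = I - b$, and cancelling $b$ in $b + \epsilon = I \leq b + a_i$ gives $\epsilon \leq a_i$ for every $i$, so $\epsilon \leq \wedge a_i$ and $I \leq b + \wedge a_i$; together with monotonicity this forces $I = b + \wedge a_i$. The subtlety to flag is exactly that one may cancel $b$ from $I$ even though one could not cancel it from a general lower bound.

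For the second bullet I would first record the auxiliary fact that $\wedge_n \tfrac{1}{n}1 = 0$. Here I would use that multiplication by $\tfrac12$ is itself an order isomorphism of $E$ onto $[0,\tfrac12 1]$, with inverse $y \mapsto 2y = y + y$ (defined precisely on $[0,\tfrac12 1]$), so it commutes with infima of descending sequences. Setting $e = \wedge_n \tfrac1n 1$, the sequence $\tfrac{1}{2n}1$ is cofinal in $\tfrac1n 1$ and hence has the same infimum $e$, giving $\tfrac12 e = \wedge_n \tfrac12 \cdot \tfrac1n 1 = \wedge_n \tfrac{1}{2n}1 = e$; but $\tfrac12 e = e$ means $e + e = \tfrac12 e + \tfrac12 e = e = e + 0$, so $e = 0$ by cancellation. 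With this in hand, Archimedianity follows quickly: assuming $\tfrac12 a \leq \tfrac12 b + \tfrac{1}{2n}1$ for all $n$, the descending sequence $\tfrac12 b + \tfrac{1}{2n}1$ has an infimum which, by the first bullet (or by repeating the one-line cancellation argument with $\tfrac12 b$ in place of $b$ and using $\wedge_n \tfrac{1}{2n}1 = 0$), equals $\tfrac12 b$. Since $\tfrac12 a$ is a lower bound of that sequence, $\tfrac12 a \leq \tfrac12 b$, and applying the order isomorphism $y \mapsto 2y$ yields $a \leq b$, which is exactly the Archimedian property.
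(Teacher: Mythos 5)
The paper does not actually prove this proposition---it only cites \cite{jacobs2017distances}---so there is no in-paper argument to compare yours against, and I can only judge the proposal on its own terms. On those terms it is correct. The supremum half of the first bullet and all of the second bullet are complete: translation by $b$ and halving really are order isomorphisms onto $[b,1]$ and $[0,\tfrac12 1]$ respectively, every cancellation you invoke is legitimate (each time you cancel $b$ you have first exhibited the element as a sum with $b$ as a summand), and the derivation of $\bigwedge_n \tfrac1n 1=0$ and of Archimedeanness from it is sound. You are also right to read the paper's $\tfrac12(b+\tfrac1n)$ as $\tfrac12 b+\tfrac1{2n}1$, which is always defined even when $b+\tfrac1n 1$ is not.

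The one point where you prove less than the literal statement is the infimum half of the first bullet: you establish $b+\bigwedge a_i=\bigwedge(b+a_i)$ only under the extra hypothesis that the right-hand infimum exists, via the observation that the \emph{greatest} lower bound---unlike an arbitrary lower bound---must dominate $b$ and can therefore be cancelled against. You flag this as the delicate point, and your caution is in fact necessary rather than a gap: for arbitrary families the translated infimum can genuinely fail to exist. In the orthomodular poset obtained by pasting four three-atom Boolean blocks $\{a,x,b\}$, $\{b,y,c\}$, $\{c,z,d\}$, $\{d,w,a\}$ in a loop (a standard non-lattice effect algebra), one has $x\wedge w=0$ with $a\perp x$ and $a\perp w$, yet $a+x=b^\perp$ and $a+w=d^\perp$ have the two incomparable lower bounds $a$ and $c$, so the infimum of $\{a+x,\,a+w\}$ does not exist and in particular is not $a+(x\wedge w)=a$. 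So the unconditional statement is false for general families, your conditional version is the strongest true form, and it suffices for every use in the paper, where the $a_i$ are descending sequences in a $\sigma$-effect algebra and the translated sequence automatically has an infimum. The only thing I would ask you to add is a sentence making the conditional reading of the first bullet explicit in the statement you are proving, since as written the existence of $\bigwedge(b+a_i)$ is part of the claim.
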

\begin{proof}
	See \cite{jacobs2017distances}.
\end{proof}

\begin{definition}
	Let $C\subset V$ be a subset of a real vector space $V$. It is called a \emph{cone} if it is closed under addition and multiplication by a positive scalar. It is \emph{proper} if $C\cap (-C) = \{0\}$ and span$(C)= V$. An \emph{order unit space} $(V,C,1)$ is a vector space $V$ with proper cone $C$ which is ordered by $v\leq w \iff w-v \in C$ such that $1$ is a \emph{strong Archimedian unit}: for every $v\in V$ there exists $n\in \N$ such that $-n1 \leq v \leq n1$ and if $v\leq \frac{1}{n} 1$ for all $n$ then $v\leq 0$.
\end{definition}

\begin{proposition}\label{prop:archconvous}
	An Archimedian convex effect algebra is isomorphic to the unit interval of an order unit space.
\end{proposition}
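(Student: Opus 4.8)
The plan is to realise $E$ concretely as the unit interval of an ordered vector space built directly out of $E$ itself, in the spirit of the Grothendieck group construction, with the convex module structure supplying scalar multiplication. Writing $(V,C,1)$ for the target, I will first build a cone $C$ of ``scalar multiples of effects,'' then form $V$ as its group of formal differences, and finally verify that $1 \in E$ is a strong Archimedian order unit whose unit interval recovers $E$ via the identity embedding $a \mapsto a$.

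First I would record that in any convex effect algebra every convex combination $ra + (1-r)b$ with $r \in [0,1]$ is defined, so that $E$ is a genuine convex set; this follows from the module axioms together with cancellativity. With this in hand, define
\[
C \;=\; \{\, (\lambda, a) : \lambda \geq 0,\ a \in E \,\}/\!\sim, \qquad (\lambda,a)\sim(\mu,b) \iff (s\lambda)\,a = (s\mu)\,b \text{ for all small } s>0,
\]
where $(\lambda, a)$ is meant to represent the formal product $\lambda a$. Addition is forced by the module structure: for $\lambda + \mu > 0$ set
\[
(\lambda, a) + (\mu, b) \;:=\; \Bigl(\lambda+\mu,\ \tfrac{\lambda}{\lambda+\mu}\,a + \tfrac{\mu}{\lambda+\mu}\,b\Bigr),
\]
the inner convex combination being defined by the previous step. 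I would then check that this addition is independent of the chosen representatives and is commutative, associative, and cancellative, the final point inheriting cancellativity directly from the effect algebra.

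Next I would form $V$ as the group of formal differences of the cancellative commutative monoid $(C,+)$, i.e.\ pairs in $C \times C$ modulo $(x,y)\sim(x',y')$ iff $x+y' = x'+y$; real scalar multiplication extends the $[0,1]$-action and sends negatives to the opposite difference, making $V$ a real vector space, and we order it by declaring $v \geq 0$ iff $v$ lies in the image of $C$. It then remains to verify the order unit axioms for $1 \in E \subseteq C$: domination of $V$ by integer multiples of $1$ is immediate from $a \leq 1$ in $E$, and the Archimedian clause ``$v \leq \tfrac1n 1$ for all $n$ implies $v \leq 0$'' is precisely the translation of the Archimedian hypothesis on $E$, which is exactly where that assumption is spent.

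The main obstacle is showing that $C$ is a \emph{proper} cone and that the embedding is both faithful and surjective onto $[0,1]_V$. Properness, $C \cap (-C) = \{0\}$, and injectivity of $a \mapsto a$ both reduce to cancellativity, whereas verifying that every $v$ with $0 \leq v \leq 1$ genuinely arises from some effect requires unwinding the difference representation and using that such a $v$ is bounded above by $1$. The bookkeeping that all the operations respect the equivalence relations is routine but slightly delicate; once it is in place, the order isomorphism $E \cong [0,1]_V$ holds by construction.
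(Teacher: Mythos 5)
The paper does not prove this proposition itself but cites the literature, where the argument is exactly the construction you outline: the Gudder--Pulmannov\'a-style cone of formal scalar multiples of effects, its Grothendieck group of differences, and the verification that $1$ is a strong Archimedian unit whose interval recovers $E$. Your sketch is sound, and the one step that genuinely deserves the care you flag is translating the effect-algebra Archimedian condition (stated with factors of $\tfrac{1}{2}$ and scalars in $[0,1]$) into the order-unit-space clause, which requires rescaling formal differences back into $E$ before the hypothesis can be applied.
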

\begin{proof}
	See \cite{jacobs2016expectation} 
\end{proof}

\noindent An order unit space has a norm defined by $\norm{v} = \inf\{r\in \R_{>0} ~;~ -r1\leq v\leq r1\}$. As a result, Archimedian convex effect algebras also have a norm given in the same way.

We introduce a different name for  `convex $\sigma$-sequential effect algebra' to stress its nature as a vector space in disguise.
\begin{definition}
	We call an effect algebra $E$ a \emph{sequential effect space} (SES) when $E$ is a convex $\sigma$-SEA. We call the order unit space $V$ such that $E\cong [0,1]_V$ its \emph{associated order unit space}.
\end{definition}
If there is no risk of confusion we will also refer to the associated order unit space of a convex $\sigma$-SEA as a sequential effect space or SES.

\begin{lemma}
	Let $E$ be a convex sequential effect algebra, then for any rational number $0\leq q\leq 1$ we have $a\mult (qb) = q(a\mult b)$.
\end{lemma}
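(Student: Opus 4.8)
The plan is to lean on the one thing we already know about $\mult$ in its second slot --- additivity (axiom~1) --- and combine it with the module structure of a convex effect algebra, bootstrapping from integer reciprocals to all rationals in $[0,1]$. As a preliminary I would note that additivity of $a\mult(-)$ extends by an easy induction to all finite sums: whenever $x_1 + \cdots + x_k$ is defined, $a\mult(x_1 + \cdots + x_k) = (a\mult x_1) + \cdots + (a\mult x_k)$.

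The heart of the argument is the unit-fraction case $q = \tfrac1n$. Since the module axiom gives $(r+s)a = ra + sa$ whenever $r+s\leq 1$, adding $\tfrac1n b$ to itself $n$ times is defined and yields $b = \tfrac1n b + \cdots + \tfrac1n b$. Writing $c := a\mult(\tfrac1n b)$ and applying the extended additivity, we get $a\mult b = c + \cdots + c$ (an $n$-fold sum). Now I would ``divide by $n$'': using $\tfrac1n(x+y) = \tfrac1n x + \tfrac1n y$ repeatedly, $\tfrac1n(a\mult b) = \tfrac1n c + \cdots + \tfrac1n c$, and then collapsing this $n$-fold sum with $(r+s)a = ra + sa$ gives $\tfrac{n}{n}c = 1\cdot c = c$. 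Hence $a\mult(\tfrac1n b) = \tfrac1n(a\mult b)$. Note that this is a direct computation of $\tfrac1n(a\mult b)$, so no cancellation or Archimedian argument is needed.

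Finally, for a general $q = \tfrac{m}{n}$ with $0\leq m\leq n$, I would write $\tfrac{m}{n}b = \tfrac1n b + \cdots + \tfrac1n b$ ($m$ summands, defined because $\tfrac{m}{n}\leq 1$), apply additivity and the unit-fraction case to obtain $a\mult(\tfrac{m}{n}b) = \tfrac1n(a\mult b) + \cdots + \tfrac1n(a\mult b)$, and then recombine with the module axiom to reach $\tfrac{m}{n}(a\mult b)$. The only real obstacle is bookkeeping: one must check at each stage that the sums invoked are actually defined, which holds precisely because the scalars involved never exceed $1$, and one must keep the repeated-addition operations distinct from the scalar multiplications throughout.
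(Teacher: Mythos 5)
Your proof is correct and follows essentially the same route as the paper's: establish the unit-fraction case $a\mult(\tfrac1n b)=\tfrac1n(a\mult b)$ by expressing $b$ as the $n$-fold sum of $\tfrac1n b$, applying additivity in the second argument, and then dividing the resulting $n$-fold sum by $\tfrac1n$ via the module axioms, before extending to general rationals by addition. You are merely more explicit than the paper about which module axioms justify the ``division by $n$'' step and about checking definedness of the intermediate sums, which is fine.
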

\begin{proof}
	We first note that the $n$-fold sum of $\frac{1}{n}b$ is equal to $b$ and similarly if the $n$-fold sum of $b$ is defined then $\frac{1}{n}(b+\ldots + b) = b$. Now if we take the $n$-fold sum of $a\circ (\frac{1}{n}b)$ with itself then by the additivity in the second argument we get $a\circ b$. Multiplying both sides by $\frac{1}{n}$ then shows that $a\circ (\frac{1}{n}b) = \frac{1}{n} (a\circ b)$. By simple addition we see that the same must hold for any rational number $0\leq q\leq 1$.
\end{proof}

\begin{proposition}
	Let $E$ be a sequential effect space, then for any $\lambda\in [0,1]$ we have $a\mult (\lambda b) = \lambda (a\mult b)$.
\end{proposition}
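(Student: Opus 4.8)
The plan is to bootstrap the previous lemma, which handles rational scalars, to an arbitrary $\lambda\in[0,1]$ by an order-theoretic squeeze carried out inside the associated order unit space. Write $c:=a\mult b$ and $d:=a\mult(\lambda b)$; the goal is to show $d=\lambda c$. The cases $\lambda=0$ and $\lambda=1$ are immediate, so I would assume $0<\lambda<1$. Two monotonicity facts will do the bulk of the work. First, scalar multiplication on $E$ is order preserving: if $r\leq s$ in $[0,1]$ then $sb=rb+(s-r)b$ by the module axioms, so $sb-rb=(s-r)b\geq 0$ and hence $rb\leq sb$. Second, the map $a\mult(-)$ is monotone by the first proposition of this section, which also records $c=a\mult b\leq a\leq 1$.

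Next I would approximate $\lambda$ from both sides by rationals, choosing $q_n,p_n\in[0,1]$ with $\lambda-\tfrac1n\leq q_n<\lambda<p_n\leq\lambda+\tfrac1n$. Monotonicity of scalar multiplication gives $q_n b\leq \lambda b\leq p_n b$, and applying the monotone map $a\mult(-)$ yields $a\mult(q_n b)\leq d\leq a\mult(p_n b)$. By the previous lemma the outer terms are rational multiples of $c$, so one obtains the sandwich
\[
q_n\,c \;\leq\; d \;\leq\; p_n\,c \qquad\text{for all }n.
\]

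It remains to pass from this sandwich to the identity $d=\lambda c$, and this is the only genuinely non-formal step, hence the main obstacle. Working in the associated order unit space, which exists and is Archimedian by Propositions \ref{prop:archconvous} and \ref{prop:normalsum}, I would rewrite the two inequalities as $d-\lambda c\leq (p_n-\lambda)c$ and $\lambda c-d\leq(\lambda-q_n)c$. Since $0\leq c\leq 1$ and $p_n-\lambda,\ \lambda-q_n\leq\tfrac1n$, both right-hand sides are bounded above by $\tfrac1n\,1$. Thus $d-\lambda c\leq\tfrac1n 1$ and $\lambda c-d\leq\tfrac1n 1$ hold for every $n$, and the strong Archimedian property of the unit forces $d-\lambda c\leq 0$ and $\lambda c-d\leq 0$, whence $d=\lambda c$ as required. (Alternatively, one can note that $\lambda b=\wedge_n p_n b$ and invoke the $\sigma$-SEA continuity $a\mult\wedge_n(p_n b)=\wedge_n\big(a\mult(p_n b)\big)$ together with $\wedge_n p_n c=\lambda c$; but the Archimedian squeeze above already uses only the convex and order structure, so I would present that route.)
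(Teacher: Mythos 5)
Your proof is correct, and while it shares the paper's overall strategy (reduce to the rational case via the preceding lemma, then exploit Archimedeanity), the mechanics are genuinely different. The paper approximates $\lambda$ from one side only, passes to the associated order unit space, and shows that the \emph{norm} of $\lambda(a\mult b)-a\mult(\lambda b)$ tends to zero; that route needs the intermediate estimate $\norm{a\mult((\lambda-q_i)b)}\leq\norm{a}\norm{(\lambda-q_i)b}$, which itself has to be extracted from monotonicity and is stated somewhat tersely. Your two-sided squeeze $q_n c\leq d\leq p_n c$ followed by $d-\lambda c\leq\tfrac1n 1$ and $\lambda c-d\leq\tfrac1n 1$ appeals directly to the strong Archimedean axiom of the order unit space and uses only monotonicity of $L_a$ and of scalar multiplication, so it sidesteps any norm estimate on the product; the one point worth making explicit is that for $n$ with $\lambda+\tfrac1n>1$ you must still choose $p_n$ rational in $(\lambda,1]$, which is always possible since $\lambda<1$. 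Your parenthetical alternative via $a\mult\wedge_n(p_nb)=\wedge_n(a\mult(p_nb))$ is also valid and is in fact closer in spirit to the infimum argument the paper uses in the subsequent Proposition \ref{prop:linearity}; presenting the Archimedean squeeze is the better choice here, as you say, because it uses only the convex and order structure and not the $\sigma$-completeness.
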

\begin{proof}
	We know that a convex $\sigma$-SEA is the unit interval of some order unit space by proposition \ref{prop:archconvous} so that there is a norm $\norm{\cdot}$ on $E$. Let $q_i$ be an increasing sequence of rational numbers that converge to $\lambda$. Using the previous lemma we compute 
	\[\norm{\lambda (a\mult b) - a\mult (\lambda b)} = \norm{(\lambda - q_i)(a\mult b) + q_i(a\mult b) - a\mult (\lambda b)} = \norm{(\lambda-q_i)(a\mult b) - a\mult ((\lambda - q_i)b)}.\]
	Since $(\lambda - q_i)b\leq (\lambda-q_i)\norm{b}1$ and the sequential product preserves the order we get $\norm{a\mult ((\lambda - q_i)b)} \leq \norm{a}\norm{(\lambda -q_i)b} = (\lambda - q_i)\norm{a}\norm{b}$. Then $\norm{\lambda (a\mult b) - a\mult (\lambda b)} \leq (\lambda - q_i)2\norm{a}\norm{b}$. This expression obviously goes to zero so that indeed $\lambda (a\mult b) = a\mult (\lambda b)$.
\end{proof}

\begin{proposition}\label{prop:linearity}
	Let $E$ be a sequential effect space. For $a,b\in E$ and $\lambda\in [0,1]$ the following holds.
	\begin{itemize}
		\item $(\lambda a)\mult b = a\mult (\lambda b) = \lambda (a\mult b)$.
		\item If $a\commu b$ then $a\commu \lambda b$.
	\end{itemize}
\end{proposition}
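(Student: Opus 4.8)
The plan is to treat the two bullet points together, noting first that the equality $a\mult(\lambda b) = \lambda(a\mult b)$ is exactly the preceding proposition, so the only genuinely new content in the first bullet is the \emph{first-argument} homogeneity $(\lambda a)\mult b = \lambda(a\mult b)$. Once this is in hand the second bullet is immediate: if $a\commu b$ then $a\mult(\lambda b) = \lambda(a\mult b) = \lambda(b\mult a) = (\lambda b)\mult a$, where the outer equalities are second- and first-argument homogeneity and the middle one is the hypothesis $a\mult b = b\mult a$; hence $a\commu\lambda b$. So everything reduces to proving $(\lambda a)\mult b = \lambda(a\mult b)$ for all $b$.

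First I would build up the compatibility backbone $a\commu\lambda a$ for every $\lambda\in[0,1]$, since this is what makes the associativity clause of axiom 4 available along the ray $\{\lambda a\}$. As $a\commu a$ holds trivially, axiom 5 applied with $c=\frac1n a$ and the compatible pair $\frac1n a,\frac1n a$ gives $\frac1n a\commu\frac2n a$, and inductively $\frac1n a\commu\frac kn a$; taking $k=n$ yields $\frac1n a\commu a$. Feeding this back into axiom 5 with $c=a$ gives $a\commu qa$ for every rational $q\in[0,1]$. To reach real $\lambda$ I would pick rationals $q_i\nearrow\lambda$, so that $q_i a$ increases to $\lambda a$ (the supremum is $\lambda a$ by the Archimedean property of Proposition \ref{prop:normalsum}); from $a\commu q_i a$ and axiom 4 we get $a\commu(q_i a)^\perp$, the complements $(q_i a)^\perp$ decrease to $(\lambda a)^\perp$, and the $\sigma$-SEA compatibility-under-infima clause gives $a\commu(\lambda a)^\perp$, whence $a\commu\lambda a$ by axiom 4 once more.

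With the backbone in place I would prove the homogeneity in two stages, mirroring the previous two results. For the rational case $(qa)\mult b = q(a\mult b)$ I would use $a\commu\frac1n a$ together with the associativity of axiom 4 to relate the first-argument scaling to the second-argument scaling, where additivity and the rational lemma do apply. For general $\lambda$ I would then pass to the limit: writing $\lambda a = \vee q_i a$ with $q_i\nearrow\lambda$ and using the order bound $0\le(\lambda a)\mult b\le\lambda a$, so that $\norm{(\lambda a)\mult b}\le\lambda\norm a$, the residual $\norm{\lambda(a\mult b)-(\lambda a)\mult b}$ should be controlled by a multiple of $(\lambda-q_i)$ and vanish, exactly in the spirit of the previous proposition.

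The hard part will be the rational first-argument homogeneity against an \emph{arbitrary} second argument $b$ that need not be compatible with $a$. The axioms supply additivity only in the second slot, and this cannot simply be transposed: first-argument additivity genuinely fails for general compatible pairs, as the complementary projections $a,a^\perp$ show, where $b = a\mult b + a^\perp\mult b$ is false. So the argument must exploit the special ``parallel'' structure of the ray $\{\lambda a\}$ — that its members share, in the motivating examples, the single square root $\sqrt a$ — which abstractly is encoded in $a\commu\lambda a$ and the resulting commutation $L_a L_{\lambda a} = L_{\lambda a}L_a$ from axiom 4. Purely algebraic manipulation only relates the ``additivity defect'' at base $a$ to the same defect at base $a\mult a$, a self-similar dead end; closing it, and likewise justifying the passage from rational to real $\lambda$ in the first argument, is where I expect to need the Archimedean and $\sigma$-completeness of the sequential effect space in an essential way.
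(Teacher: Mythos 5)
Your reduction of the second bullet to first-argument homogeneity is fine, and your compatibility backbone $a\commu qa$ is a correct use of axioms 4 and 5. But the proposal never actually proves the one thing everything hinges on, namely $(qa)\mult b = q(a\mult b)$ for an arbitrary, possibly incompatible $b$, and your closing paragraph concedes exactly this: starting from $a\commu\lambda a$, the associativity clause of axiom 4 only yields $a\mult((\lambda a)\mult c) = (a\mult(\lambda a))\mult c = (\lambda a^2)\mult c$, which relates the scaling defect at base $a$ to the same defect at base $a^2$ --- the ``self-similar dead end'' you describe. So as it stands this is a genuine gap, not a detail to be filled in; no amount of Archimedean or $\sigma$-completeness applied along the ray $\{\lambda a\}$ closes it, because you have no handle on $(\lambda a)\mult b$ at all.

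The missing idea (and the paper's route) is to commute $a$ with the scalar $\lambda 1$ rather than with $\lambda a$. From $qa\commu a$ and $qa^\perp\commu a^\perp$ (hence $qa^\perp\commu a$ by axiom 4) one gets $a\commu(qa+qa^\perp)=q1$ by axiom 5, so $(q1)\mult a = a\mult(q1) = q(a\mult 1) = qa$; taking a decreasing rational sequence $q_i\to\lambda$ and using the $\sigma$-SEA clauses (preservation of infima in the second slot and of compatibility under infima) extends this to $a\commu\lambda 1$ and $(\lambda 1)\mult a=\lambda a$ for \emph{every} effect $a$. The factorisation $\lambda a = a\mult(\lambda 1)$ together with associativity along the compatible pair $a\commu\lambda 1$ then does in one line what the ray $\{\lambda a\}$ cannot: $(\lambda a)\mult b = (a\mult(\lambda 1))\mult b = a\mult((\lambda 1)\mult b) = a\mult(\lambda b) = \lambda(a\mult b)$, the last step being the preceding proposition. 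The point is that $\lambda 1$ is compatible with everything, so the scalar is shuttled from the first slot onto $b$ without ever invoking first-argument additivity.
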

\begin{proof}
	Clearly $\frac{1}{n}a\commu \frac{1}{n}a$ so that by the axioms of a SEA we get $\frac{1}{n}a\commu a$. In the same way we also see that $qa\commu a$ and $qa^\perp \commu a^\perp$ for any rational $0\leq q\leq 1$. By the axioms of a SEA we then also have $qa^\perp \commu a$ and then also $a\commu (qa+qa^\perp)=q1$ so that $(q1)\mult a = a\mult (q1) = q(a\mult 1) = qa$. Now let $\lambda\in[0,1]$ be a real number and let $q_i$ be a decreasing set of rational numbers that converges to $\lambda$, then it is straightforward to show that $\wedge q_i 1 = \lambda 1$ and because $a\commu q_i 1$ we indeed have $a\commu \wedge q_i1=\lambda 1$. As a result $(\lambda a)\mult b = (a\mult (\lambda 1))\mult b = a\mult ((\lambda 1)\mult b) = a \mult (\lambda b) = \lambda (a\mult b)$. Now if $a\commu b$ then we get $a\mult (\lambda b) = \lambda (a\mult b) = \lambda (b\mult a) = (\lambda b)\mult a$ so that indeed $a\commu \lambda b$. \footnote{In the sequel we will also consider sequential products that do not necessarily preserve infima, but are instead norm-continuous in the first argument. This proposition does then still hold since $q_ia\rightarrow \lambda a$ when $q_i\rightarrow \lambda$ and we would then have $q_i(a\mult b) = (q_ia)\mult b\rightarrow (\lambda a)\mult b$.}
\end{proof}

Because $(\lambda a)\mult b = \lambda (a\mult b)$ we can also define the sequential product for any $v> 0$ in the associated OUS of $E$ by $v\mult w:= \norm{v} (\norm{v}^{-1}v)\mult w$. In the sequel we will therefore assume that the sequential product is defined for all elements in the positive cone of the order unit space associated to a sequential effect space.

\subsection{Commutative subalgebras}
\begin{definition}
	Let $S\subseteq E$ be a subset of elements of a sequential effect algebra $E$. We define the \emph{commutant} of $S$ to be $S^\prime := \{a\in E~;~ \forall s\in S: s\commu a\}$, the set of elements in $E$ that commute with every element in $S$. Similarly the \emph{bicommutant} $S^{\prime\prime}:=(S^\prime)^\prime$ of $S$ is the set of elements in $E$ that commutes with every element in $S^\prime$.
\end{definition}

\begin{lemma}
	Let $S\subseteq E$ be a subset of elements of a sequential effect algebra $E$, then $S^\prime$ is an effect sub-algebra. If $E$ is a $\sigma$-SEA, then so is $S^\prime$ and if $E$ is a convex $\sigma$-SEA then so is $S^\prime$.
\end{lemma}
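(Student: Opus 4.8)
The plan is to verify the required closure properties directly from the SEA axioms, treating each layer of structure (effect algebra, $\sigma$-completeness, convexity) in turn. Throughout I use that commutation is symmetric, so $a\in S'$ is equivalent to saying $a\commu s$ for every $s\in S$.

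First I would check that $S'$ is an effect sub-algebra. The elements $0$ and $1$ lie in $S'$ because $1\mult a = a = a\mult 1$ and $0\mult a = 0 = a\mult 0$ for all $a$, so they commute with every $s$. Closure under complementation is the first half of axiom (4): from $s\commu a$ we obtain $s\commu a^\perp$, hence $a^\perp\in S'$. Closure under the partial addition is axiom (5): if $a,b\in S'$ and $a+b$ is defined, then $s\commu a$ and $s\commu b$ give $s\commu (a+b)$ for each $s\in S$. The same axiom also gives $s\commu (a\mult b)$, so in fact $S'$ is closed under the sequential product and is therefore a sub-SEA, not merely a sub-effect-algebra.

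Next, assuming $E$ is a $\sigma$-SEA, I would show $S'$ inherits $\sigma$-completeness. Given a descending sequence $a_1\geq a_2\geq\ldots$ in $S'$, the infimum $\wedge a_i$ exists in $E$. For each $s\in S$ we have $s\commu a_i$ for all $i$, so the commutation-preservation clause of the $\sigma$-SEA axioms yields $s\commu \wedge a_i$; hence $\wedge a_i\in S'$. Since $S'$ carries the restricted order and this infimum already lies in $S'$, it is also the infimum computed in $S'$. The two defining $\sigma$-SEA identities, $b\mult \wedge a_i = \wedge(b\mult a_i)$ and the preservation of commutation, hold in $E$ and, because the relevant infima again land in $S'$, restrict to $S'$ unchanged. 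Finally, if $E$ is additionally convex, closure under the $[0,1]$-action is exactly Proposition \ref{prop:linearity}: from $s\commu a$ and $\lambda\in[0,1]$ we get $s\commu \lambda a$, so $\lambda a\in S'$, and the module axioms are inherited verbatim from $E$.

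The argument is essentially bookkeeping, with each closure property matched to a single axiom. The only point requiring a moment's care is the $\sigma$-completeness step, where one must confirm both that the infimum computed in the ambient algebra $E$ stays inside $S'$ (handled by the commutation-preservation axiom) and that it genuinely serves as the infimum of the sequence within the subalgebra; I expect no deeper obstacle.
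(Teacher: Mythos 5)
Your proof is correct and follows essentially the same route as the paper's: match each closure property of $S'$ to the corresponding SEA axiom (complement via axiom 4, sum and product via axiom 5), use the commutation-preservation clause of the $\sigma$-SEA definition for infima, and invoke Proposition \ref{prop:linearity} for the convex action. Your explicit check that the infimum computed in $E$ still serves as the infimum within $S'$ is a small point the paper leaves implicit, but there is no substantive difference.
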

\begin{proof}
	If we have $a,b\in S^\prime$ such that $a+b$ is defined in $E$ then for all $s\in S$ we have $a\commu s$ and $b\commu s$ so that $(a+b)\commu s$ and $(a\mult b)\commu s$ as well which means that $S^\prime$ is closed under addition and multiplication. It is similarly also closed under taking the complement. Obviously $1$ and $0$ commute with every element so that $0,1\in S^\prime$ which means that is indeed an effect sub-algebra. If it is a $\sigma$-SEA then whenever $a_i\in S^\prime$ have a join in $E$, since $a_i\commu s$ for all $s\in S$ we also get $\wedge a_i\commu s$ for all $s\in S$ so that $\wedge a_i\in S^\prime$. As a consequence of proposition \ref{prop:linearity}, in a convex $\sigma$-SEA when $a\commu b$ then $\lambda a\commu b$ as well so that if $E$ is a convex $\sigma$-SEA, $S^\prime$ will be so as well.
\end{proof}

\begin{proposition}\label{prop:bicommutant}
	Let $S\subseteq E$ be a set of mutually commuting elements in a sequential effect algebra $E$, then $S^{\prime\prime}$ is a commutative sequential effect sub-algebra of $E$. If $E$ is a convex $\sigma$-SEA, then $S^{\prime\prime}$ is a convex $\sigma$-SEA as well.
\end{proposition}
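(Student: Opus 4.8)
The plan is to split the statement into two independent parts: first, that $S^{\prime\prime}$ carries the appropriate algebraic structure (an effect sub-algebra, and in the convex $\sigma$-case a convex $\sigma$-SEA), and second, that this structure is \emph{commutative}. The first part requires essentially no new work. The preceding lemma applies to an \emph{arbitrary} subset of $E$, so I would simply apply it with $S^\prime$ playing the role of $S$: since $S^\prime\subseteq E$ is a subset, its commutant $(S^\prime)^\prime = S^{\prime\prime}$ is an effect sub-algebra, it inherits the $\sigma$-SEA structure when $E$ is a $\sigma$-SEA, and it inherits the convex $\sigma$-SEA structure when $E$ is a convex $\sigma$-SEA. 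This disposes of all the structural assertions, leaving commutativity as the only genuine content.

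For commutativity I would run the standard bicommutant argument, using that the relation $a\commu b$ is symmetric and that passing to the commutant reverses inclusions: if $A\subseteq B$ then $B^\prime\subseteq A^\prime$, since any element commuting with all of $B$ in particular commutes with all of $A$. First I would record that the hypothesis that $S$ consists of mutually commuting elements says precisely that every element of $S$ commutes with every element of $S$, that is, $S\subseteq S^\prime$. Applying the inclusion-reversing commutant to this yields $S^{\prime\prime} = (S^\prime)^\prime \subseteq S^\prime$.

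With this inclusion in hand commutativity is immediate. Take any $a,b\in S^{\prime\prime}$. On the one hand $b\in S^{\prime\prime}\subseteq S^\prime$, so $b$ lies in $S^\prime$; on the other hand $a\in S^{\prime\prime} = (S^\prime)^\prime$ commutes with every element of $S^\prime$. Hence $a\commu b$, and since $a,b$ were arbitrary, $S^{\prime\prime}$ is commutative. Combined with the structural assertions obtained from the lemma, this shows that $S^{\prime\prime}$ is a commutative sequential effect sub-algebra, and a convex $\sigma$-SEA whenever $E$ is one.

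I do not expect a serious obstacle here: once the preceding lemma is invoked, the argument is purely order-theoretic. The only points demanding a little care are that the compatibility relation $\commu$ is symmetric, so that ``commutant'' behaves as expected, and that the inherited SEA axioms genuinely restrict to the sub-algebra. For instance, associativity (axiom~4) holds inside $S^{\prime\prime}$ because any pair $a\commu b$ there licenses $a\mult(b\mult c) = (a\mult b)\mult c$ for all $c\in E$, in particular for $c\in S^{\prime\prime}$, which is itself closed under $\mult$.
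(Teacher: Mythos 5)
Your proof is correct and follows essentially the same route as the paper: invoke the preceding lemma for the structural claims, observe that mutual commutativity gives $S\subseteq S^\prime$, deduce $S^{\prime\prime}\subseteq S^\prime$ by the inclusion-reversing property of the commutant, and conclude that any two elements of $S^{\prime\prime}$ commute. No gaps.
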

\begin{proof}
	By the previous lemma we already know that $S^{\prime\prime}$ is a (convex $\sigma$-)SEA, the only thing left to prove is that it is commutative. Since $S$ consists of mutually commuting elements we see that $S\subseteq S^\prime$ so that all elements of $S^{\prime\prime}$ will commute with all elements of $S$ giving $S^{\prime\prime}\subseteq S^\prime$. As a consequence all the elements of $S^{\prime\prime}$ have to commute with all other elements of $S^{\prime\prime}$ by its defining property.
\end{proof}

\begin{definition}
	Let $a\in E$ be an element of a (convex $\sigma$-)SEA. We denote the commutative sub-algebra generated by $a$ by $C(a):=\{a\}^{\prime\prime}$.
\end{definition}
When $a\commu b$ we get $a^\perp\commu b$, $a^n\commu b$ and also $(\lambda a^n+\mu a^m)\commu b$for scalars $\lambda,\mu\in [0,1]$. As a consequence, $C(a)$ contains (among other effects) all polynomials in $a$ and $a^\perp$ that are contained in $E$.
		
\subsection{Sharpness}
Some measurements are a statistical combination of other measurements. For instance the effect $a=\frac{1}{2}b +\frac{1}{2}c$ can be interpreted as flipping a coin and based on the result measuring either $b$ or $c$. Other measurements however have no such non-trivial decompositions.

\begin{definition}
	Let $E$ be an effect algebra. We call $a\in E$ \emph{sharp} when $a\wedge a^\perp = 0$, i.e. when the following holds: if $b\leq a$ and $b\leq a^\perp$ then $b=0$.
\end{definition}

\begin{proposition}\label{prop:sharpproperties}
	\cite{gudder2002sequential} Let $p,a \in E$ in a sequential effect algebra with $p$ sharp.
	\begin{itemize}
		\item $a$ is sharp if and only if $a\mult a^\perp = 0$ if and only if $a\mult a = a$.
		\item $p\leq a$ if and only if $p\mult a = a\mult p = p$.
		\item $a\leq p$ if and only if $p\mult a = a\mult p = a$.
		\item $p\mult a = 0$ if and only if $p+a$ is defined and in this case $p+a$ is the least upper bound of $p$ and $a$. The sum $p+a$ is sharp if and only if $a$ is sharp.
		\item If $a$ is sharp as well and if it commutes with $p$ then $p\mult a$ is sharp and $a\wedge p = p\mult a$.
	\end{itemize}
\end{proposition}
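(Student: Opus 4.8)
The plan is to prove the five items in the stated order, reducing each to the sequential-product axioms together with the two facts recorded in the first proposition: that $x\mult y\le x$ and that $\mult$ preserves order in each variable. A device I would use repeatedly is that $x\mult y = 0$ forces $x\commu y$ (by the orthogonality axiom $y\mult x=0$ as well), so that once a product collapses to zero I am free to use additivity in the second argument together with the splitting $1 = p + p^\perp$. For the first item I would begin from the trivial fact $a\commu a$, which by axiom 4 gives $a\commu a^\perp$ and hence $a\mult a^\perp = a^\perp\mult a$; since $x\mult y\le x$, this common value lies below both $a$ and $a^\perp$, so it is $0$ whenever $a$ is sharp. The equivalence $a\mult a^\perp = 0 \iff a\mult a = a$ is then immediate from $a = a\mult 1 = a\mult a + a\mult a^\perp$ and cancellativity. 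The one direction needing real care is that $a\mult a = a$ implies sharpness: given $b\le a$ and $b\le a^\perp$, order preservation yields $a\mult b\le a\mult a^\perp = 0$ and $a^\perp\mult b\le a^\perp\mult a = 0$, so $b$ commutes with both $a$ and $a^\perp$, whence $b = b\mult(a+a^\perp) = b\mult a + b\mult a^\perp = 0$.

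I would treat the second and third items together, since each rests on first extracting $p\commu a$ and then evaluating the product via $1 = p + p^\perp$. For the second, $p\le a$ gives $a^\perp\le p^\perp$, hence $p\mult a^\perp\le p\mult p^\perp = 0$, so $p\commu a$; squeezing $p = p\mult p\le p\mult a\le p$ forces $p\mult a = p$, and commutativity gives $a\mult p = p$. The third item is the mirror image, using $p^\perp\mult a\le p^\perp\mult p = 0$ and the expansion $a = a\mult(p+p^\perp) = a\mult p$. Both converses are one line from $x\mult y\le x$: $a\mult p = p$ with $a\mult p\le a$ gives $p\le a$, and $p\mult a = a$ with $p\mult a\le p$ gives $a\le p$.

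For the fourth item I would show $p\mult a = 0 \iff a\le p^\perp$ (equivalently, $p+a$ defined): if $a\le p^\perp$ then $p\mult a\le p\mult p^\perp = 0$, and conversely $p\commu a$ lets me write $a = a\mult(p+p^\perp) = a\mult p^\perp = p^\perp\mult a\le p^\perp$. That $p+a$ is the least upper bound I would obtain by taking any upper bound $c$: since $p\le c$ and $p$ is sharp, the second item gives $c\mult p = p$, so $c = c\mult p + c\mult p^\perp = p + c\mult p^\perp$; order preservation and the third item then give $a = p^\perp\mult a\le p^\perp\mult c = c\mult p^\perp$, whence $p + a\le p + c\mult p^\perp = c$. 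For the sharpness claim I would compute $(p+a)\mult(p+a) = p + a\mult a$, first securing $p\commu(p+a)$ and $a\commu(p+a)$ from axiom 5 so that the products distribute; by the first item this equals $p+a$ precisely when $a\mult a = a$, i.e.\ when $a$ is sharp.

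Finally, for the fifth item a short associativity computation using $p\commu a$ together with $p\mult p = p$ and $a\mult a = a$ gives $(p\mult a)\mult(p\mult a) = p\mult a$, so $p\mult a$ is sharp by the first item. For $a\wedge p = p\mult a$, the element $p\mult a = a\mult p$ is a lower bound of both $p$ and $a$ since $x\mult y\le x$, and any common lower bound $b$ satisfies $p\mult b = b$ and $a\mult b = b$ by the third item, so $b = a\mult(p\mult b) = (a\mult p)\mult b\le p\mult a$. I expect the main obstacle to be the bookkeeping forced by the \emph{one-sided} additivity of $\mult$: because distributivity holds only in the second argument, almost every step must first establish the relevant compatibility (through orthogonality or axiom 5) before $1 = p + p^\perp$ can be inserted. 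The genuinely non-mechanical point is the converse of the first item, where sharpness has to be manufactured from the single identity $a\mult a = a$ rather than simply read off from order relations.
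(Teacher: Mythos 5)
Your proof is correct, and each step is justified by the axioms and the basic facts ($x\mult y\le x$, order preservation in the second argument, and axiom 3 to convert one-sided vanishing into compatibility before invoking $1=p+p^\perp$). The paper itself gives no proof of this proposition --- it simply cites Gudder and Greechie's original work --- and your argument is essentially the standard one from that reference, so there is nothing to flag beyond noting that your reconstruction is complete and self-contained.
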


When our space is the unit interval of a $B(H)$, we can consider the \emph{support projection} of an effect, the projector to the subspace where it acts non-trivially. The following definition generalises this notion to arbitrary effect algebras.

\begin{definition}
	For an element $a$ in an effect algebra we let $\ceil{a}$ denote the smallest sharp effect above $a$ and $\floor{a}$ the largest sharp effect below $a$ (when they exist).
\end{definition}

\begin{proposition}\label{prop:ceilexists}
	\cite{gudder2002sequential} In a $\sigma$-SEA, $\floor{a}$ always exists and is given by $\floor{a} = \wedge a^n$ where $a^n = a\mult a \mult\ldots \mult a$. The ceiling is given by $\ceil{a} = \floor{a^\perp}^\perp$.
\end{proposition}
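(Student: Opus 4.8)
The plan is to set $p := \wedge a^n$ and verify in turn that this infimum exists, that $p$ is sharp, that $p \leq a$, and that $p$ dominates every sharp effect below $a$; the statement about the ceiling then follows by complementation. First I would check that the powers form a decreasing sequence so that the infimum is available. From the basic properties of the sequential product (namely $a\mult b\leq a$ together with order preservation of $L_a$) we get $a^2 = a\mult a\leq a$, and then $a^{n+1} = a\mult a^n\leq a\mult a^{n-1}=a^n$ by induction, so that $a = a^1\geq a^2\geq\cdots$. The defining property of a $\sigma$-SEA then guarantees that $p=\wedge a^n$ exists.

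The crux is to show that $p$ is sharp, for which, by proposition \ref{prop:sharpproperties}, it suffices to prove $p\mult p = p$. Here I would lean on the two continuity clauses of the $\sigma$-SEA definition, working inside the commutative algebra $C(a)$, which is itself a convex $\sigma$-SEA by proposition \ref{prop:bicommutant}. Since every $a^n$ commutes with $a$, the clause ``$a\commu a_i$ for all $i$ implies $a\commu\wedge a_i$'' gives $a\commu p$, and the clause on infima gives $a\mult p = a\mult\wedge a^n = \wedge(a\mult a^n) = \wedge a^{n+1} = p$, where dropping the first term leaves the infimum of a decreasing sequence unchanged. By commutativity $p\mult a = p$ as well, and the associativity available once $p\commu a$ holds yields $p\mult a^n = p$ for all $n$ by a short induction. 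Applying the continuity clause once more, $p\mult p = p\mult\wedge a^n = \wedge(p\mult a^n) = \wedge p = p$, so $p$ is sharp. That $p\leq a$ is immediate from $p = \wedge a^n\leq a^1$.

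For maximality, let $q$ be any sharp effect with $q\leq a$. By proposition \ref{prop:sharpproperties} this forces $a\mult q = q$. An easy induction then shows $q\leq a^n$ for every $n$: given $q\leq a^n$, applying the order-preserving map $L_a$ gives $q = a\mult q\leq a\mult a^n = a^{n+1}$. Hence $q\leq\wedge a^n = p$, so $p$ is indeed the largest sharp effect below $a$, i.e.\ $p = \floor{a}$. Finally, since complementation reverses the order and $s\mapsto s^\perp$ preserves sharpness, the sharp effects below $a^\perp$ correspond bijectively to the sharp effects above $a$ via $s\leftrightarrow s^\perp$; the largest of the former, namely $\floor{a^\perp}$, therefore corresponds to the smallest of the latter, giving $\ceil{a} = \floor{a^\perp}^\perp$. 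The main obstacle is the sharpness argument, since it requires combining both continuity clauses of the $\sigma$-SEA definition with the associativity that becomes available only after the commutativity $p\commu a$ has been secured.
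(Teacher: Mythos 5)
Your proof is correct. The paper itself offers no argument for this proposition --- it simply cites Gudder--Greechie --- and what you have written is essentially the standard proof from that reference: establish that the powers decrease, use the two normality clauses of a $\sigma$-SEA to get $a\commu p$ and $p\mult p=p$, and then use proposition \ref{prop:sharpproperties} for both sharpness and maximality, with the ceiling handled by order-reversal of complementation. One small remark: the detour through $C(a)$ and proposition \ref{prop:bicommutant} is unnecessary (and slightly out of scope, since that proposition's stronger conclusions assume convexity while the present statement is about a bare $\sigma$-SEA); every step you actually perform is a direct application of the SEA axioms and the two $\sigma$-SEA clauses, so you can simply drop that framing.
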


\begin{lemma} \label{lem:ceilzero}
	Let $E$ be a $\sigma$-SEA and let $a,b\in E$. If $b\mult a = 0$ then $b\mult \ceil{a} = 0$.
\end{lemma}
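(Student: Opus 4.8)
The plan is to translate the statement about the ceiling into one about the floor of $a^\perp$ and then exploit additivity and $\sigma$-continuity of the sequential product. By Proposition \ref{prop:ceilexists} we have $\ceil{a} = \floor{a^\perp}^\perp$ with $\floor{a^\perp} = \wedge_n (a^\perp)^n$, so it suffices to show $b \mult \floor{a^\perp} = b$: indeed, from $1 = \floor{a^\perp} + \ceil{a}$ and additivity of $L_b$ in its second argument this would give $b \mult \ceil{a} = b - b \mult \floor{a^\perp} = 0$.

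First I would record the base case. Writing $1 = a + a^\perp$ and using additivity together with the hypothesis $b \mult a = 0$ yields $b = b \mult 1 = b \mult a + b \mult a^\perp = b \mult a^\perp$. The heart of the argument is to promote this to $b \mult (a^\perp)^n = b$ for every $n$, which I would prove by induction. The decomposition $(a^\perp)^n = (a^\perp)^{n+1} + \big((a^\perp)^n - (a^\perp)^{n+1}\big)$ is a valid sum in $E$, and applying $L_b$ gives $b = b \mult (a^\perp)^n = b \mult (a^\perp)^{n+1} + b \mult \big((a^\perp)^n - (a^\perp)^{n+1}\big)$ by the inductive hypothesis; it therefore remains to kill the second summand. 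Here I use that $(a^\perp)^n - (a^\perp)^{n+1} = a \mult (a^\perp)^n$, which follows from expanding $(a^\perp)^n \mult (a + a^\perp) = (a^\perp)^n$ and reordering via the compatibility of $a$ with its own powers inside $C(a)$. Since $a \mult (a^\perp)^n \le a$, monotonicity of the product gives $b \mult \big(a \mult (a^\perp)^n\big) \le b \mult a = 0$, and hence $b \mult (a^\perp)^{n+1} = b$, closing the induction.

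With $b \mult (a^\perp)^n = b$ established, the $\sigma$-continuity axiom applied to the descending sequence $(a^\perp)^n$ gives $b \mult \floor{a^\perp} = b \mult \wedge_n (a^\perp)^n = \wedge_n \big(b \mult (a^\perp)^n\big) = b$, and the reduction of the first paragraph then finishes the proof. The step I expect to be the main obstacle is precisely the inductive claim $b \mult (a^\perp)^n = b$: because $b$ and $a^\perp$ need not be compatible, I cannot simply reassociate $b \mult \big((a^\perp)^n \mult a^\perp\big)$, so I must route everything through additivity and the order bound $b \mult x \le b \mult a$ for $x \le a$. Writing the defect as $a \mult (a^\perp)^n$ (rather than the equal but less useful $(a^\perp)^n \mult a$, which only sits below $(a^\perp)^n$) is exactly what makes this order bound applicable.
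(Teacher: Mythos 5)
Your proof is correct and follows essentially the same route as the paper's: reduce to showing $b\mult\floor{a^\perp}=b$, establish $b\mult a^\perp=b$, use the decomposition via $a+a^\perp$ together with the bound $b\mult x\leq b\mult a=0$ for $x\leq a$ to climb to $b\mult(a^\perp)^n=b$, and finish with $\sigma$-continuity. The only difference is that you write out the inductive step in full where the paper proves the $n=2$ case and asserts that it extends.
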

\begin{proof}
	$b\mult a = 0$ implies that $b\mult a^\perp = b$. Now note that $a^\perp = a^\perp \mult 1 = a^\perp \mult (a+a^\perp) = a\mult a^\perp + (a^\perp)^2$ where we have used that $a\commu a^\perp$. Now since $a\mult a^\perp \leq a$ we see that $b\mult (a\mult a^\perp) \leq b\mult a = 0$ so that $b = b\mult a^\perp = b\mult (a\mult a^\perp + (a^\perp)^2) = b\mult (a^\perp)^2$. This can be extended to $b\mult (a^\perp)^n = b$ so that $b\mult \floor{a^\perp} = b\mult\wedge (a^\perp)^n = \wedge (b\mult (a^\perp)^n) = \wedge b = b$ which of course implies that $0=b\mult \floor{a^\perp}^\perp = b\mult \ceil{a}$.
\end{proof}

\section{Structure of sequential effect spaces}\label{sec:structureofSES}
In this section we will see that sequential effect spaces have some useful properties and structure. In particular, we will characterise commutative sequential effect spaces as unit intervals of continuous function spaces over \emph{basically disconnected} sets. This characterisation gives us a spectral theorem for general sequential effect spaces that in turn has several interesting consequences.

\subsection{Characterisation of commutative sequential effect spaces}

\begin{definition}
	Let $X$ be a compact Haussdorff space. We call $V\subseteq X$ a \emph{zero-set} if a continuous function $f:X\rightarrow \R$ exists such that $V=f^{-1}(0)$. Similarly we call $U\subseteq X$ a \emph{cozero-set} when $U=f^{-1}((0,\infty))$ for some continuous $f$. We call $X$ \emph{basically disconnected} when the interior of all zero-sets is closed or, equivalently, when the closure of a cozero-set is again open. We will refer to these sets that are closed and open as \emph{clopen} sets. We denote by $C(X)$ the collection of continuous maps $f:X\rightarrow \R$.
\end{definition}

Basically disconnected spaces have some known properties the proofs of which we relegate to the appendix for completeness.

\begin{proposition}\label{prop:dedekindcomplete}
	If $X$ is basically disconnected, then $C(X)$ is \emph{$\sigma$-Dedekind complete}. That is, any countable bounded set of continuous functions has a supremum.
\end{proposition}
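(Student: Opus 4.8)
The plan is to construct the least upper bound explicitly as the upper semicontinuous regularisation of the pointwise supremum, and to use basic disconnectedness only at the end, to prove that this regularisation is continuous. Given a countable bounded family $\{f_n\}\subseteq C(X)$, compactness of $X$ lets me take the upper bound to be a constant, so that $\varphi(x):=\sup_n f_n(x)$ is a bounded function. It is lower semicontinuous, since each superlevel set $\{\varphi>t\}=\bigcup_n\{f_n>t\}$ is open. I then set $g(x):=\inf_{U\ni x}\sup_{y\in U}\varphi(y)$, the smallest upper semicontinuous function with $g\geq\varphi$; the inequality $g\geq\varphi$ (because $x\in U$ gives $\sup_U\varphi\geq\varphi(x)$) and the minimality among usc majorants are standard facts about the regularisation that I would record in a line.

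Granting for the moment that $g$ is continuous, it is immediate that $g=\sup_n f_n$ in $C(X)$. Indeed $g\geq\varphi\geq f_n$ for every $n$, so $g$ is an upper bound; and if $h\in C(X)$ satisfies $h\geq f_n$ for all $n$, then $h\geq\varphi$ pointwise, and since $h$ is in particular upper semicontinuous while $g$ is the least usc majorant of $\varphi$, we get $g\leq h$. Hence the whole problem reduces to establishing $g\in C(X)$.

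The continuity of $g$ is the crux, and the only place where the hypothesis enters. By construction $g$ is upper semicontinuous, so it suffices to show it is lower semicontinuous, i.e.\ that $\{g>t\}$ is open for every $t$. Write $Z_s:=\{\varphi>s\}$; as a countable union of cozero-sets it is itself a cozero-set, so by basic disconnectedness $\overline{Z_s}$ is open. The key containment I would prove is that $\overline{Z_s}\subseteq\{g>t\}$ whenever $s>t$: if $x\in\overline{Z_s}$ then every neighbourhood of $x$ meets $Z_s$, whence $\sup_U\varphi\geq s$ for every neighbourhood $U$ and therefore $g(x)\geq s>t$. Conversely, if $g(x_0)>t$ I pick $s$ with $t<s<g(x_0)$; then $\inf_U\sup_U\varphi=g(x_0)>s$ forces every neighbourhood of $x_0$ to meet $Z_s$, so $x_0\in\overline{Z_s}$. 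Combining these, $x_0\in\overline{Z_s}\subseteq\{g>t\}$ with $\overline{Z_s}$ open, so $\{g>t\}$ is a neighbourhood of each of its points and hence open, which gives lower semicontinuity.

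The reductions and the verification that $g$ is the least upper bound are routine; I expect the main obstacle to be the continuity argument, and within it the bookkeeping of strict versus non-strict inequalities in the chain $g(x_0)>s$, $x_0\in\overline{Z_s}$, $g\geq s$ on $\overline{Z_s}$. This is exactly the step that fails for a merely open superlevel set, which need not have open closure, whereas a cozero-set does in a basically disconnected space; it thereby indicates that basic disconnectedness is precisely the right hypothesis here, since only countably many continuous functions, and hence cozero superlevel sets, are involved.
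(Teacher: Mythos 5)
Your proof is correct and takes essentially the same route as the paper's: the upper semicontinuous regularisation $g$ you construct is exactly the paper's candidate $f_\infty(x)=\sup\{s : x\in\overline{\{\varphi>s\}}\}$, and in both arguments basic disconnectedness enters at the same single point, namely to make the closures $\overline{\{\varphi>s\}}$ open, which is what delivers lower semicontinuity of the candidate supremum. (The only cosmetic difference is that the paper obtains this openness via a preliminary lemma on countable joins of clopens, while you invoke directly that a countable union of cozero-sets is a cozero-set.)
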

\begin{proof}
	See appendix.
\end{proof}

\begin{proposition}
	Let $X$ be a basically disconnected space, then $\{f:X\rightarrow [0,1]~;~f \text{ continuous}\}$ is a commutative convex $\sigma$-SEA with the sequential product being pointwise multiplication: $f\mult g = fg$.
\end{proposition}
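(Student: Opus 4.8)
The plan is to observe that $E:=\{f:X\rightarrow[0,1]\mid f\text{ continuous}\}$ is precisely the unit interval of the order unit space $C(X)$, whose cone is the set of nonnegative functions and whose unit is the constant function $1$, and then to verify the axioms one layer at a time, exploiting that every operation is computed pointwise. First I would note that the effect-algebra axioms and the convex-module axioms are inherited pointwise from the fact that $[0,1]\subseteq\R$ is itself a convex effect algebra under truncated addition $f+g$ (defined exactly when $f+g\leq 1$), complement $f^\perp=1-f$, and scalar action $(r,f)\mapsto rf$; there is nothing to check beyond observing that the relevant sums and products stay in $[0,1]$.

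The algebra is manifestly commutative, since pointwise multiplication of real-valued functions commutes, and this trivialises most of the SEA axioms. Orthogonality (axiom 3) is symmetric, compatibility $f\commu g$ holds for every pair, and the compatibility-propagation and associativity clauses (axioms 4 and 5) reduce to $f^\perp$ lying in $E$ and to associativity of real multiplication $f(gh)=(fg)h$. Axiom 2 is $1\cdot f=f$, and axiom 1, $f(g+h)=fg+fh$, is pointwise distributivity; here $f\leq 1$ gives $f(g+h)\leq g+h\leq 1$, so the product and the sum $fg+fh$ are again legitimate effects.

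For the $\sigma$-structure, given a descending sequence $a_1\geq a_2\geq\cdots$ in $E$, the increasing bounded sequence $1-a_i$ has a supremum in $C(X)$ by Proposition \ref{prop:dedekindcomplete}, so $\wedge a_i=1-\vee(1-a_i)$ exists; it satisfies $0\leq\wedge a_i\leq a_1\leq 1$, and since $[0,1]_{C(X)}$ is order-convex in $C(X)$ this is also the infimum computed inside $E$. The compatibility clause of the $\sigma$-SEA definition is vacuous by commutativity, so the one substantive point left is the $\sigma$-normality of the product, $b\mult\wedge a_i=\wedge(b\mult a_i)$.

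This identity is the crux and is genuinely the hard part, because order infima in $C(X)$ need not coincide with pointwise infima (the pointwise infimum of continuous functions is only upper semicontinuous), so a naive pointwise argument is unavailable. My plan is to avoid the pointwise description altogether. Write $a=\wedge a_i$ and $c=\wedge(ba_i)$. The inequality $ba\leq c$ is immediate, since multiplication by $b\geq 0$ is order-preserving and hence sends the lower bound $a$ of $\{a_i\}$ to a lower bound $ba$ of $\{ba_i\}$. For the reverse, fix $\epsilon>0$ and put $g=b+\epsilon$; as $g\geq\epsilon>0$ is bounded away from zero, $1/g\in C(X)$, so multiplication by $g$ is a linear order-automorphism of $C(X)$ (its inverse being multiplication by $1/g$) and therefore preserves infima, giving $\wedge(ga_i)=g\wedge a_i=ga$. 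From $c\leq ba_i\leq ga_i$ for all $i$ we get $c\leq ga=ba+\epsilon a\leq ba+\epsilon 1$, and since this holds for every $\epsilon>0$ the Archimedean property of the order unit space forces $c\leq ba$. Hence $c=ba$, which completes the verification that $E$ is a commutative convex $\sigma$-SEA.
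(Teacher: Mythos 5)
Your proposal is correct, and it is in fact considerably more substantive than the paper's own proof, which consists of a single sentence asserting that the structure is ``obviously'' a convex effect algebra and that ``pointwise multiplication is normal.'' You correctly identify the one point that genuinely needs an argument: normality of the product, $b\mult \wedge a_i = \wedge(b\mult a_i)$, is not a pointwise triviality, because the order infimum of a decreasing sequence in $C(X)$ (which exists by $\sigma$-Dedekind completeness) need not agree with the pointwise infimum, which is in general only upper semicontinuous. Your resolution --- perturbing $b$ to the invertible multiplier $g=b+\epsilon$, noting that multiplication by $g$ is an order automorphism of $C(X)$ and hence preserves infima, and then letting $\epsilon\to 0$ via the Archimedean property of the order unit --- is clean and correct, and each inequality in the chain $c\leq \wedge(ga_i)=ga\leq ba+\epsilon 1$ checks out. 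The remaining verifications (the effect-algebra and convex-module axioms holding pointwise, commutativity trivialising axioms 3--5, and the vacuity of the compatibility clause in the $\sigma$-SEA definition) are handled exactly as the paper intends. In short: same overall approach as the paper, but you supply the missing argument for the only nontrivial claim.
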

\begin{proof}
	It is obviously a convex effect algebra with the normal additive structure, and the pointwise multiplication gives it a commutative sequential structure. Since pointwise multiplication is normal it is indeed a commutative convex $\sigma$-SEA.
\end{proof}

We will spend the rest of this section proving the converse: If $E$ is a commutative sequential effect space, then it is the unit interval of a $C(X)$ where $X$ is a basically disconnected space.

A convex $\sigma$-sequential effect algebra $E$ is isomorphic to the unit interval of some order unit space $V$ (proposition \ref{prop:archconvous}). As a consequence of proposition \ref{prop:linearity} the sequential product acts linearly on $V$ in the second argument. If the multiplication is commutative it will of course also be linear in the first argument, making it a bilinear operation on $V$. Of course if we have $v,w\geq 0$ in $V$ then $v\mult w\geq 0$. As a result, we are in a position to use a classic result by Kadison:

\begin{proposition}
	\cite{kadison1951representation} Let $V$ be an order unit space with a bilinear operation $\mult$ such that $1\mult v = v$ and $v\mult w\geq 0$ whenever $v,w\geq 0$, then there exists a compact Haussdorff space $X$ and an isometric embedding $\Phi: V\rightarrow C(X)$ such that $\Phi(V)$ lies dense in $C(X)$ and $\Phi(v\mult w) = \Phi(v)\Phi(w)$. If $V$ is complete in its norm then $V$ is isomorphic as an ordered algebra to $C(X)$.
\end{proposition}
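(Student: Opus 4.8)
The plan is to mimic the classical proof that a commutative order unit space with an appropriate multiplication embeds into continuous functions on its state space, and then leverage completeness to upgrade the embedding to an isomorphism. First I would take $X$ to be the state space of $V$, i.e. the set of positive unital linear functionals $\phi: V \to \R$ (those with $\phi(1)=1$ and $\phi(v)\geq 0$ for $v\geq 0$), equipped with the weak-$*$ topology. By the Banach--Alaoglu theorem this $X$ is compact, and it is Hausdorff since the weak-$*$ topology separates functionals; this disposes of the topological requirements on $X$. I would then define $\Phi: V \to C(X)$ by $\Phi(v)(\phi) := \phi(v)$, the evaluation map. Continuity of each $\Phi(v)$ is immediate from the definition of the weak-$*$ topology, and linearity of $\Phi$ is clear since each $\phi$ is linear.

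Next I would establish that $\Phi$ is an isometry. The key input here is that the order unit norm can be recovered from the states: one shows $\norm{v} = \sup_{\phi \in X} \abs{\phi(v)}$, which follows from the Archimedian property of the order unit together with a Hahn--Banach separation argument producing, for any $v$, a state attaining (or approaching) the norm. Isometry then gives injectivity for free and shows $\Phi(V)$ is a norm-closed-in-spirit subspace. That $\Phi(V)$ lies dense in $C(X)$ I would obtain from the Stone--Weierstrass theorem: the image is a subspace containing the constants (since $\Phi(1)=1$) and, crucially, closed under multiplication, so it is a subalgebra, and it separates points of $X$ because distinct states differ on some $v$ by construction.

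The genuinely load-bearing step, and the one I expect to be the main obstacle, is proving that $\Phi$ is multiplicative, i.e. $\Phi(v\mult w) = \Phi(v)\Phi(w)$, equivalently $\phi(v\mult w) = \phi(v)\phi(w)$ for every state $\phi$. The positivity and unitality hypotheses $1\mult v = v$ and $v\mult w \geq 0$ for $v,w\geq 0$ are precisely what one needs, but translating them into a pointwise product formula requires showing each state is \emph{multiplicative} as a functional. The standard route is to fix $v\geq 0$ and analyze the bilinear form $(w,u)\mapsto \phi(v \mult (w \mult u))$ or to use that $\phi$ restricted to the commutative structure behaves like a point-evaluation; one typically shows that the positive bilinear form associated to $\phi$ must factor through $\phi\otimes\phi$ by exploiting a Cauchy--Schwarz-type inequality $\phi(v\mult w)^2 \leq \phi(v\mult v)\phi(w\mult w)$ and the order-unit normalization. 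I would spend most of the effort here, as the commutativity and the positivity-of-products are both essential and the argument is where the algebraic content really lives.

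Finally, for the last sentence, I would assume $V$ is norm-complete. Since $\Phi$ is an isometry, $\Phi(V)$ is then a complete, hence closed, subspace of $C(X)$; combined with the density established above this forces $\Phi(V) = C(X)$. As $\Phi$ is an isometric, unital, multiplicative bijection that preserves and reflects the order (positivity is detected by the states on both sides), it is an isomorphism of ordered algebras, completing the proof.
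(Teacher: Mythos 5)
The paper does not prove this proposition at all --- it is quoted verbatim from Kadison's 1951 representation theorem and justified only by the citation --- so the only question is whether your reconstruction would actually work. It would not as written: the choice of $X$ as the \emph{full} state space is fatal. For the evaluation map $\Phi(v)(\phi)=\phi(v)$ to satisfy $\Phi(v\mult w)=\Phi(v)\Phi(w)$ you need every $\phi\in X$ to be multiplicative, $\phi(v\mult w)=\phi(v)\phi(w)$, and this is false for generic states even in the model example $V=C(Y)$ with pointwise product, where a state is an arbitrary probability measure and $\int fg\,d\mu\neq\big(\int f\,d\mu\big)\big(\int g\,d\mu\big)$. Concretely, in $\R^2$ with coordinatewise product the state $\phi(x,y)=\tfrac12(x+y)$ gives $\phi(e_1\mult e_1)=\tfrac12\neq\tfrac14=\phi(e_1)^2$. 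The correct construction --- and the actual content of Kadison's theorem --- takes $X$ to be the weak-$*$ closure of the set of \emph{pure} states (extreme points of the state space), and the load-bearing lemma is that for an Archimedean order unit space with a unital positive bilinear product, every extreme state is multiplicative. Your sketch gestures at a Cauchy--Schwarz argument for multiplicativity of arbitrary states, but that inequality cannot rescue a false statement; moreover it already presupposes symmetry and positive-semidefiniteness of $(v,w)\mapsto\phi(v\mult w)$, i.e.\ commutativity of $\mult$ and $\phi(v\mult v)\ge 0$ for non-positive $v$, neither of which is among the hypotheses (their automatic validity is part of what Kadison proves, not an input).

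The remaining steps are fine once $X$ is corrected: isometry still holds with $X$ the closed pure state space because $\phi\mapsto\phi(v)$ is affine and weak-$*$ continuous, so by Krein--Milman its supremum over all states is attained on extreme points; Stone--Weierstrass then gives density (note that its applicability depends on multiplicativity having been established first, so the order of your write-up should be reversed); and the completeness argument for surjectivity in the last sentence is correct. But without the pure-state restriction and the extreme-point-implies-homomorphism lemma, the central multiplicativity claim is simply false, so the proposal has a genuine gap rather than an alternative route.
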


\begin{proposition}
	\cite{jacobs2017distances} Let $V$ be an order unit space that is $\sigma$-Dedekind complete (i.e.\ the supremum $\wedge v_i$ of a norm bounded sequence $v_1\geq v_2\geq \ldots \geq 0$ exists), then $V$ is complete with respect to its norm.
\end{proposition}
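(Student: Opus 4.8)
The plan is to use the standard criterion that a normed space is complete if and only if every absolutely convergent series converges, and to manufacture the sum of such a series as an order-theoretic supremum supplied by $\sigma$-Dedekind completeness. First I would reduce to the absolutely convergent case: given a Cauchy sequence $(x_n)$ in $V$, pass to a subsequence with $\norm{x_{n_{k+1}} - x_{n_k}} \le 2^{-k}$ and set $w_k = x_{n_{k+1}} - x_{n_k}$, so that $\sum_k \norm{w_k} < \infty$. Since the partial sums of $\sum_k w_k$ telescope to $x_{n_{K+1}} - x_{n_1}$, convergence of $\sum_k w_k$ yields convergence of the subsequence, and a Cauchy sequence with a convergent subsequence converges to the same limit. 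Thus it suffices to show that $\sum_n w_n$ converges in norm whenever $\sum_n \norm{w_n} < \infty$.

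To construct the limit I would exploit the order unit $1$ to turn the series into a monotone bounded sequence. Write $r_n = \norm{w_n}$ and $t_N = \sum_{n=1}^N r_n$, with $t = \sum_n r_n < \infty$. Because $-r_n 1 \le w_n \le r_n 1$, the element $w_n + r_n 1$ is positive, so the partial sums $p_N := \sum_{n=1}^N (w_n + r_n 1) = s_N + t_N 1$, where $s_N = \sum_{n=1}^N w_n$, form an increasing sequence. It is bounded above, since $s_N \le t_N 1 \le t1$ gives $p_N \le 2t 1$. Applying $\sigma$-Dedekind completeness (in the form that increasing norm-bounded sequences have suprema, obtained from the decreasing case of the hypothesis by negation), the supremum $p = \vee_N p_N$ exists, and I would propose $s := p - t 1$ as the candidate value of the series.

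The crux is to control $\norm{s - s_N}$ by an order estimate. For $M > N$ one has $0 \le p_M - p_N = \sum_{n=N+1}^M (w_n + r_n 1) \le 2(t - t_N)1$, and by the normality of suprema (proposition \ref{prop:normalsum}, which ensures $p - p_N = \vee_M (p_M - p_N)$) these uniform bounds pass to the supremum over $M$, yielding $0 \le p - p_N \le 2(t - t_N)1$. Since $s - s_N = (p - p_N) - (t - t_N)1$, this sandwiches $s - s_N$ between $\pm (t - t_N)1$, so $\norm{s - s_N} \le t - t_N \to 0$, completing the argument. The main obstacle is precisely this last step: $\sigma$-Dedekind completeness only hands over a purely order-theoretic supremum, and the work lies in converting that object into a genuine norm limit. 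The key enabling fact is that suprema commute with subtraction of a fixed element (proposition \ref{prop:normalsum}), which is exactly what lets the uniform order bound $2(t-t_N)1$ survive the passage to the supremum and thereby control the order-unit norm.
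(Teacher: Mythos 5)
Your proof is correct. The paper gives no argument of its own for this proposition---it simply defers to \cite{jacobs2017distances}---and your write-up is essentially the standard proof that reference supplies: reduce norm-completeness to convergence of absolutely convergent series, shift the partial sums by multiples of the order unit to make them monotone and norm-bounded, extract the supremum via $\sigma$-Dedekind completeness (applied to the decreasing sequence $2t1-p_N$), and convert the uniform order bound $\pm(t-t_N)1$ into the norm estimate. Two minor remarks: the appeal to proposition \ref{prop:normalsum} at the crucial step is dispensable, since $p\leq p_N+2(t-t_N)1$ follows directly from $p$ being the \emph{least} upper bound of the $p_M$; and the inequality $-\norm{w_n}1\leq w_n\leq \norm{w_n}1$ with the infimum attained does hold here because the unit is Archimedean (and if one prefers not to check this, replacing $\norm{w_n}$ by $\norm{w_n}+2^{-n}$ costs nothing).
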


\begin{corollary}\label{cor:commkadison}
	Let $E$ be a commutative sequential effect space, then there exists some compact Haussdorff space $X$ such that $E$ is isomorphic as a sequential effect algebra to the unit interval of $C(X)$.
\end{corollary}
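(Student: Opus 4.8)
The plan is to assemble the chain of representation results that immediately precede the corollary, using the commutativity of $E$ to promote the sequential product to a genuine bilinear algebra structure and using $\sigma$-completeness to obtain norm completeness, so that Kadison's theorem yields an honest isomorphism rather than merely a dense embedding.

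First I would invoke Proposition \ref{prop:normalsum} and Proposition \ref{prop:archconvous} to realise $E$ as the unit interval $[0,1]_V$ of an order unit space $V$. By Proposition \ref{prop:linearity} the product is linear in the second argument, and since $E$ is commutative it is linear in the first as well; extending it to the positive cone by $v\mult w := \norm{v}(\norm{v}^{-1}v)\mult w$ as discussed above, and then to all of $V$ by bilinearity (using that the cone spans $V$ and that $\mult$ is additive on the cone in each argument), turns $\mult$ into a bilinear operation on $V$ with $1\mult v = v$ and $v\mult w \geq 0$ whenever $v,w\geq 0$. This is precisely the input required by Kadison's theorem \cite{kadison1951representation}, which supplies a compact Hausdorff space $X$ together with an isometric, multiplicative order-embedding $\Phi\colon V \to C(X)$ with dense image.

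The key remaining step --- and the one I expect to be the only real work --- is to upgrade this dense embedding to a surjective isomorphism. By the result of \cite{jacobs2017distances} it suffices to show that $V$ is complete in its norm, and for that in turn it suffices to show that $V$ is $\sigma$-Dedekind complete. Here I would transfer the $\sigma$-completeness of the effect algebra $E$ to the full space $V$ by a scaling argument: given a descending sequence $v_1 \geq v_2 \geq \cdots \geq 0$ in $V$, set $M = \norm{v_1}$, so that $0 \leq v_i \leq M1$ and hence $M^{-1}v_i \in [0,1]_V = E$. This is a descending sequence of effects, whose infimum exists because $E$ is a $\sigma$-effect algebra, and rescaling by $M$ produces $\wedge v_i$ in $V$. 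Thus $V$ is $\sigma$-Dedekind complete, hence norm complete, and $\Phi$ is an isomorphism of ordered algebras $V \cong C(X)$.

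Finally I would restrict $\Phi$ to the unit intervals. Because $\Phi$ preserves the cone and the unit, it carries the effect-algebra operations across --- addition, which is defined through the order, and the complement $1-a$ --- and because $\Phi(v\mult w) = \Phi(v)\Phi(w)$ it sends the sequential product to pointwise multiplication, which is exactly the sequential product on $[0,1]_{C(X)}$. Hence $E \cong [0,1]_{C(X)}$ as sequential effect algebras, as claimed. The argument is in essence a bookkeeping assembly of the preceding propositions; the only genuinely new ingredient is the scaling lemma that moves infima between $E$ and its associated order unit space $V$.
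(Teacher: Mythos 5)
Your proposal is correct and follows essentially the same route as the paper: realise $E$ as the unit interval of an order unit space, transfer $\sigma$-completeness from $E$ to $V$ by the rescaling trick to get $\sigma$-Dedekind completeness and hence norm completeness, and then apply Kadison's representation theorem to the bilinear product obtained from commutativity. The paper's own proof is just a compressed version of exactly this argument.
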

\begin{proof}
	$E$ is isomorphic to the unit interval of some order unit space $V$. Since any bounded sequence of positive elements in $V$ can be rescaled to fit inside its unit interval, we see that $V$ must be $\sigma$-Dedekind complete, because $E$ is a $\sigma$-effect algebra. The claim then follows by the previous two propositions.
\end{proof}

\begin{proposition}\label{prop:sharpclopen}
	A $p\in C(X)$ is sharp iff it is the characteristic function of a clopen set.
\end{proposition}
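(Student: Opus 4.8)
The plan is to translate the abstract sharpness condition into a pointwise statement and then read off the clopen set directly. I would first invoke proposition \ref{prop:sharpproperties}, which tells us that $p$ is sharp if and only if $p\mult p = p$. Since the sequential product on $C(X)$ is pointwise multiplication, this is equivalent to $p(x)^2 = p(x)$ for every $x\in X$, i.e.\ $p(x)\in\{0,1\}$ for all $x$. So the whole statement reduces to the elementary fact that a continuous function $X\to[0,1]$ taking only the values $0$ and $1$ is precisely the characteristic function of a clopen set.

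For the forward direction I would suppose $p$ is sharp, so that $p$ is $\{0,1\}$-valued by the reduction above, and set $U := p^{-1}(\{1\})$. Then $p = \chi_U$ by construction. Because $p$ is continuous and both $\{0\}$ and $\{1\}$ are closed in $[0,1]$, the sets $U = p^{-1}(\{1\})$ and its complement $X\setminus U = p^{-1}(\{0\})$ are both closed, so $U$ is clopen.

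For the converse I would start from a clopen $U\subseteq X$ and check that $\chi_U$ is continuous: it is locally constant, equal to $1$ on the open set $U$ and to $0$ on the open set $X\setminus U$, so $\chi_U\in C(X)$ and it evidently lands in $[0,1]$. Moreover $\chi_U^2 = \chi_U$ pointwise, i.e.\ $\chi_U\mult \chi_U = \chi_U$, so $\chi_U$ is sharp by proposition \ref{prop:sharpproperties}.

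I do not expect any genuine obstacle here; the only step requiring a little care is the first reduction, namely using proposition \ref{prop:sharpproperties} to replace the order-theoretic definition $p\wedge p^\perp = 0$ by the algebraic identity $p\mult p = p$. (One could also argue directly, since in $C(X)$ the effect-algebra infimum is the pointwise minimum, so $p\wedge p^\perp = \min(p,1-p) = 0$ already forces $p(x)\in\{0,1\}$.) It is worth remarking that basic disconnectedness of $X$ plays no role in this particular statement, being needed only for the surrounding spectral results.
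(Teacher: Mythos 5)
Your proof is correct and follows essentially the same route as the paper: both reduce sharpness to the pointwise idempotency $p(x)^2=p(x)$ via proposition \ref{prop:sharpproperties} and then use continuity of $p$ to conclude that $p^{-1}(\{1\})$ is clopen (the paper phrases this via preimages of open sets, you via preimages of the closed sets $\{0\}$ and $\{1\}$, which is the same observation). Your explicit treatment of the converse direction and the remark that basic disconnectedness is not needed here are both accurate.
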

\begin{proof}
	Let $p:X\rightarrow [0,1]$ be an effect. It is sharp if and only if it is idempotent: $\forall x\in X: p(x)^2 = p(x)$. This implies that $p(x)=0$ or $p(x)=1$. This means that $p = \chi_S$, the characteristic function of some $S\subseteq X$. Let $U\subseteq [0,1]$ be any open subset of the unit interval. If $p$ is to be continuous we must have $p^{-1}(U)$ be open again. This set is either $\emptyset, X, S, X\backslash S$ depending on whether $U$ contains $1$ or $0$ or both. We conclude that both $S$ and $X\backslash S$ must be open, so that $S$ is indeed clopen.
\end{proof}

\begin{proposition}\label{prop:imclopen}
	Let $C(X)$ be a SES. For an effect $p:X\rightarrow [0,1]$ its ceiling is $\ceil{p}=\chi_S$ where $S=\cl{p^{-1}((0,1])}$ (where $\cl{A}$ denotes the closure of a set $A$).
\end{proposition}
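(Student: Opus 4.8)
The plan is to verify directly that $\chi_S$ satisfies the defining property of the ceiling, namely that it is the smallest sharp effect lying above $p$. Existence of $\ceil{p}$ is already guaranteed in any $\sigma$-SEA by proposition \ref{prop:ceilexists}, so it suffices to identify it. One could instead try to evaluate the formula $\ceil{p}=\floor{p^\perp}^\perp = (\wedge (p^\perp)^n)^\perp$, but computing the infimum $\wedge (p^\perp)^n$ in $C(X)$ requires care, since the pointwise limit $\chi_{\{p^\perp=1\}}$ need not be continuous and the lattice infimum in $C(X)$ is only the largest continuous function below all $(p^\perp)^n$. The direct characterisation avoids this, so I would argue it instead.

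First I would check that $\chi_S$ is a sharp effect. The set $p^{-1}((0,1]) = p^{-1}((0,\infty))$ (the two agree because $p$ takes values in $[0,1]$) is precisely a cozero-set, so by the definition of a basically disconnected space its closure $S$ is open, and being a closure it is also closed; hence $S$ is clopen. By proposition \ref{prop:sharpclopen} the characteristic function $\chi_S$ is therefore a sharp effect of the algebra. Next I would establish $\chi_S \geq p$ by a pointwise comparison: if $p(x)>0$ then $x\in p^{-1}((0,1])\subseteq S$, so $\chi_S(x)=1\geq p(x)$, while if $p(x)=0$ then $\chi_S(x)\geq 0 = p(x)$. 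Thus $\chi_S$ is a sharp upper bound for $p$.

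For minimality, let $q$ be any sharp effect with $q\geq p$. By proposition \ref{prop:sharpclopen} we may write $q=\chi_T$ for a clopen set $T$. Wherever $p(x)>0$ we have $\chi_T(x)\geq p(x)>0$, which forces $\chi_T(x)=1$, i.e.\ $x\in T$; hence $p^{-1}((0,1])\subseteq T$. Since $T$ is closed it contains the closure $S=\cl{p^{-1}((0,1])}$, so $S\subseteq T$ and therefore $\chi_S\leq \chi_T = q$. This shows $\chi_S$ lies below every sharp effect above $p$, so $\chi_S$ is the smallest sharp effect above $p$, which is exactly $\ceil{p}$.

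The only genuine subtlety is the appeal to basic disconnectedness to guarantee that $S$ is clopen, so that $\chi_S$ is a legitimate sharp effect in the first place; everything else is routine pointwise reasoning. I expect this to be the step where the hypotheses on $X$ are essential, since for a general compact Hausdorff space the closure of a cozero-set need not be open, and then the smallest sharp effect above $p$ might fail to be a characteristic function at all.
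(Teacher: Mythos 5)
Your argument is circular in the context where this proposition sits. You justify that $S=\cl{p^{-1}((0,1])}$ is clopen by appealing to the definition of a basically disconnected space, but the hypothesis of the proposition is only that $C(X)$ is a SES for some compact Hausdorff $X$ (coming out of Kadison's representation in corollary \ref{cor:commkadison}); basic disconnectedness of $X$ is \emph{not} assumed. Indeed, this proposition is precisely the tool the paper uses to \emph{prove} that $X$ is basically disconnected: corollary \ref{cor:zerosetint} deduces from it that interiors of zero-sets are clopen, which then feeds into theorem \ref{theor:commbasicallydisconnect}. So the clopen-ness of $S$ has to be extracted from the sequential ($\sigma$-SEA) structure, not assumed of the topology. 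Your closing remark that ``the hypotheses on $X$ are essential'' at this step shows you have the logical direction reversed.

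Once that assumption is removed, your minimality argument still gives useful information but not the conclusion. What survives: $\ceil{p}$ exists by proposition \ref{prop:ceilexists} and equals $\chi_{S'}$ for some clopen $S'$ by proposition \ref{prop:sharpclopen}, and your pointwise argument shows any clopen $T$ with $\chi_T\geq p$ satisfies $T\supseteq S$, so $S'\supseteq S$. The missing step is the reverse inclusion $S'\subseteq S$, i.e.\ that $S'\setminus S$ is empty, and this cannot be obtained by comparing $\chi_{S'}$ with $\chi_S$ since you do not yet know $\chi_S$ is continuous. The paper closes this gap by assuming $S'\setminus \cl{p^{-1}((0,1])}$ nonempty, using normality of the compact Hausdorff space to produce an effect $f$ supported there with $fp=0$, and then invoking lemma \ref{lem:ceilzero} to get $f\ceil{p}=0$, a contradiction. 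That Urysohn-plus-lemma-\ref{lem:ceilzero} step is the essential content of the proof and is what your proposal is missing.
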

\begin{proof}
	Fix an effect $p:X\rightarrow [0,1]$. Write im$(p) = p^{-1}((0,1])$ for the open set where $p$ is nonzero. By proposition \ref{prop:ceilexists} $p$ has a ceiling $\ceil{p}$. By proposition \ref{prop:sharpclopen} this must be a characteristic function $\chi_S$ for some clopen $S$. Because $p\leq \chi_S$ we must have im$(p)\subseteq S$ so that $\overline{\text{im}(p)}\subseteq \overline{S}=S$. Let $T = S/ \overline{\text{im}(p)}$ which is an open set, since $S$ is open and we subtract a closed set. Towards contradiction, assume it is non-empty and pick an $x\in T$. Since $X/ T$ and $\{x\}$ are disjoint closed sets and a compact Haussdorff space is normal we can then find a continuous $f:X\rightarrow [0,1]$ such that $f(x)=1$ and $f(X/ T)=\{0\}$. In other words, $f$ is an effect with support on $T$. Then by construction  $fp = 0$. By lemma \ref{lem:ceilzero} $f\chi_S = f\ceil{p}=0$. By construction of $f$ this is not true so we arrive at a contradiction. We conclude that $T$ must have been empty and thus that $S=\overline{\text{im}(p)}$ so that $\overline{\text{im}(p)}$ is clopen.
\end{proof}
\begin{corollary}\label{cor:zerosetint}
	If $C(X)$ is a SES, then the interior of all zero-sets, that is $f^{-1}(0)$ for some continuous function $f:X\rightarrow \R$, is clopen.
\end{corollary}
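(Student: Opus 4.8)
The plan is to deduce this directly from Proposition~\ref{prop:imclopen}, which already tells us that the closure of the cozero-set of any effect is clopen, combined with the elementary topological identity relating the interior of a set to the closure of its complement. In other words, the real content has been done in the previous proposition, and this corollary is just a matter of dualising from cozero-sets to zero-sets.

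First I would reduce an arbitrary zero-set to the cozero-set (the ``$\text{im}$'') of an effect. Given a continuous $f:X\rightarrow\R$, set $Z=f^{-1}(0)$. If $f$ vanishes identically then $Z=X$ and its interior is $X$, which is trivially clopen, so assume $f\neq 0$. The function $\abs{f}$ is continuous and nonnegative, and rescaling it to $p:=\norm{\abs{f}}^{-1}\abs{f}$ yields an effect $p:X\rightarrow[0,1]$, since $\abs{f(x)}\leq\norm{\abs{f}}$ pointwise. Taking absolute values does not change the zero-set, so $p^{-1}((0,1])=\abs{f}^{-1}((0,\infty))=X\setminus Z$. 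Thus $X\setminus Z$ is precisely $\text{im}(p)$ in the notation of Proposition~\ref{prop:imclopen}.

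Next I would invoke Proposition~\ref{prop:imclopen}: as $C(X)$ is a SES, we have $\ceil{p}=\chi_S$ with $S=\cl{p^{-1}((0,1])}=\cl{X\setminus Z}$, and that proposition establishes that this $S$ is clopen. Finally, using the standard fact that the interior of a set is the complement of the closure of its complement, we obtain $\text{int}(Z)=X\setminus\cl{X\setminus Z}=X\setminus S$. Since the complement of a clopen set is clopen, $\text{int}(Z)$ is clopen, as required.

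I do not expect a genuine obstacle beyond Proposition~\ref{prop:imclopen}. The only points needing a little care are the passage from $f$ to an honest effect (normalising $\abs{f}$ and observing that the zero-set is unchanged) and the degenerate case $f\equiv 0$; the remaining step is just the identity $\text{int}(A)=X\setminus\cl{X\setminus A}$ applied to $A=Z$.
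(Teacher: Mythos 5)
Your proof is correct and follows essentially the same route as the paper's: normalise $\abs{f}$ to an effect, apply Proposition~\ref{prop:imclopen} to see that $\cl{X\setminus Z}$ is clopen, and take complements via $\mathrm{int}(Z)=X\setminus\cl{X\setminus Z}$. The only difference is that you explicitly treat the degenerate case $f\equiv 0$, which the paper leaves implicit.
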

\begin{proof}
	Let $C=f^{-1}(0)$ be the zero-set of some continuous function. By post-composing $f$ with the absolute value function and rescaling we can take $f$ to have its image in $[0,1]$ without changing it's zero-set so that $C$ is the kernel of some effect. The set $X\backslash C$ is then the image of an effect so that by the previous proposition $\cl{X\backslash C} = X\backslash C^\circ$ is clopen. Its complement $C^\circ$ is then clopen as well.
\end{proof}

\begin{theorem}\label{theor:commbasicallydisconnect}
	Any commutative sequential effect space is isomorphic to the unit interval of some $C(X)$ where $X$ is basically disconnected. Conversely the unit interval of any such space is a commutative sequential effect space.
\end{theorem}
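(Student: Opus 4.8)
The plan is to prove the two directions separately, and the backward direction is essentially already done. For the converse, given that $X$ is basically disconnected, Proposition 4.16 (the one immediately following Proposition \ref{prop:dedekindcomplete}) tells us precisely that the continuous functions $X \to [0,1]$ form a commutative convex $\sigma$-SEA under pointwise multiplication, which is exactly a commutative sequential effect space. So that half requires no new work beyond citing that proposition.

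The forward direction is where the content lies, and most of it has been assembled in the preceding corollaries. Starting from a commutative sequential effect space $E$, Corollary \ref{cor:commkadison} already produces a compact Hausdorff space $X$ with $E \cong [0,1]_{C(X)}$ as sequential effect algebras. The only thing remaining is to verify that this particular $X$ is basically disconnected — that is, that the interior of every zero-set is clopen. But this is exactly the statement of Corollary \ref{cor:zerosetint}, which was proved under the hypothesis that $C(X)$ is a SES. Since $E$ is a commutative sequential effect space and $E \cong [0,1]_{C(X)}$, the space $C(X)$ inherits the structure of a SES, so the hypothesis of Corollary \ref{cor:zerosetint} is met. Thus the interior of every zero-set of $C(X)$ is clopen, which is by definition the condition that $X$ is basically disconnected.

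**First I would** state the backward direction and dispatch it in one sentence via Proposition 4.16. Then I would recall that Corollary \ref{cor:commkadison} gives the isomorphism $E \cong [0,1]_{C(X)}$, transport the SES structure along this isomorphism so that $C(X)$ itself is a SES, and finally invoke Corollary \ref{cor:zerosetint} to conclude that interiors of zero-sets are clopen, i.e.\ that $X$ is basically disconnected. The whole argument is a matter of chaining the corollaries together in the right order.

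**The main obstacle** is not really a mathematical difficulty but a bookkeeping subtlety: one must make sure the sequential-effect-space structure is genuinely carried across the isomorphism from $E$ to $[0,1]_{C(X)}$, so that $C(X)$ legitimately satisfies the hypotheses ``$C(X)$ is a SES'' required by Propositions \ref{prop:sharpclopen}, \ref{prop:imclopen} and Corollary \ref{cor:zerosetint}. Since Corollary \ref{cor:commkadison} gives an isomorphism of sequential effect algebras (respecting addition, the order, the convex structure, infima, and the product), this transport is automatic, but it is worth stating explicitly. Once that point is granted, no further estimates or constructions are needed — the theorem is a clean synthesis of the machinery already developed in this subsection.
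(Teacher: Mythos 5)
Your proposal is correct and follows exactly the paper's own route: the forward direction is Corollary \ref{cor:commkadison} combined with Corollary \ref{cor:zerosetint}, and the converse is the proposition following Proposition \ref{prop:dedekindcomplete}. The extra remark about transporting the SES structure along the isomorphism is a reasonable explicit note on a point the paper leaves implicit, but it does not change the argument.
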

\begin{proof}
	A direct consequence of the previous corollary in combination with corollary \ref{cor:commkadison}.
\end{proof}

\subsection{A spectral theorem}

Having established that commutative sequential effect spaces correspond to function spaces over a basically disconnected space, we can use this characterisation to prove the following proposition.
\begin{proposition}\label{prop:commutativespectral}
	Let $E$ be a commutative SES, then any $a\in E$ is the supremum and norm limit of an increasing sequence of effects of the form $\sum_{k=1}^n \lambda_i p_i$ where $\lambda_i>0$ and the $p_i$ are orthogonal sharp effects.
\end{proposition}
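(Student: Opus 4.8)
The plan is to pass to the function representation and reduce the statement to a concrete uniform-approximation problem. By Theorem \ref{theor:commbasicallydisconnect} we may assume $E=[0,1]_{C(X)}$ for a basically disconnected compact Hausdorff space $X$, with the sequential product given by pointwise multiplication. Then $a$ is a continuous function $a:X\to[0,1]$; by Proposition \ref{prop:sharpclopen} the sharp effects are exactly the characteristic functions of clopen sets, and two such effects are orthogonal precisely when the underlying clopen sets are disjoint. So the goal becomes: exhibit an increasing sequence of ``clopen step functions'' lying below $a$ and converging to $a$ uniformly.

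First I would produce the clopen sets needed. For $\lambda\in(0,1)$ the superlevel set $\{a\ge\lambda\}$ is the zero-set of the continuous function $(\lambda-a)^+$, so by Corollary \ref{cor:zerosetint} its interior $P_\lambda:=\text{int}\{a\ge\lambda\}$ is clopen, and the $P_\lambda$ are nested decreasing in $\lambda$. The key structural fact, which I expect to be the crux, is to understand how faithfully $P_\lambda$ records the levels of $a$ despite the passage to the interior. On the one hand $P_\lambda\subseteq\{a\ge\lambda\}$, so $a\ge\lambda$ on $P_\lambda$; on the other hand, if $a(x)>\lambda$ strictly then continuity yields a neighbourhood of $x$ on which $a>\lambda$, so $x\in P_\lambda$. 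Hence $\{a>\lambda\}\subseteq P_\lambda\subseteq\{a\ge\lambda\}$, the only points possibly discarded by the interior being boundary points where $a$ equals $\lambda$ exactly.

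With this two-sided containment I would set, for $m\in\N$ and $n=2^m$,
\[
s_n:=\frac1n\sum_{k=1}^{n}\chi_{P_{k/n}}=\sum_{k=1}^{n}\frac{k}{n}\,\chi_{P_{k/n}\setminus P_{(k+1)/n}},
\]
the second form exhibiting $s_n$ as a sum of positive multiples of the orthogonal sharp effects $\chi_{D_k}$, where $D_k:=P_{k/n}\setminus P_{(k+1)/n}$ are disjoint clopen sets (discarding empty pieces so that each scalar is $>0$). A short count using the containment above shows $s_n\le a$ pointwise, and that whenever $k/n<a(x)$ one has $x\in P_{k/n}$, so $s_n(x)\ge a(x)-1/n$; hence $0\le a-s_n\le 1/n$ everywhere, giving $\norm{a-s_n}\le 1/n\to 0$. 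Refining along powers of two makes the sequence increasing: since $P_{k/n}=P_{2k/(2n)}\subseteq P_{(2k-1)/(2n)}$, each index contributing to $s_n(x)$ yields two indices contributing to $s_{2n}(x)$, so $s_n\le s_{2n}$.

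Finally, the supremum claim follows from the two previous facts. Each $s_{2^m}\le a$, so $a$ is an upper bound; and any $b\in E$ with $b\ge s_{2^m}$ for all $m$ satisfies $b(x)\ge\sup_m s_{2^m}(x)=a(x)$ pointwise by uniform convergence, hence $b\ge a$. Thus $a$ is the least upper bound of the increasing sequence $(s_{2^m})_m$ and simultaneously its norm limit, as required. The only real obstacle throughout is the bookkeeping forced by the interiors in $P_\lambda$ — checking that discarding boundary level points costs at most $1/n$ in the uniform estimate and does not break monotonicity — and this is exactly what the strict-inequality continuity argument together with the dyadic refinement resolve.
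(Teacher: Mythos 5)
Your proof is correct and follows essentially the same route as the paper's appendix proof: reduce to $C(X)$ via Theorem \ref{theor:commbasicallydisconnect}, replace the dyadic superlevel sets of $a$ by clopen sets sandwiched between $\{a>k/n\}$ and $\{a\ge k/n\}$ (the paper takes closures of the open strict superlevel sets $\cl{a^{-1}((k/n,1])}$, you take interiors of the closed ones), and telescope the resulting increasing dyadic step functions into sums of orthogonal sharp effects. The monotonicity along powers of two, the $O(1/n)$ uniform error estimate, and the passage from norm limit to supremum all match the paper's argument.
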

\begin{proof}
	It is true for $C(X)$ with $X$ basically disconnected (see the appendix for details), so by theorem \ref{theor:commbasicallydisconnect} it is true for $E$.
\end{proof}

This result for commutative sequential effect spaces in turns holds for arbitrary sequential effect spaces.

\begin{theorem}\label{theor:spectraltheorem}
	Let $E$ be a SES and let $a\in E$, then $a$ can be written as the supremum and norm limit of an increasing sequence of effects of the form $\sum_i \lambda_i p_i$ where $\lambda_i>0$ and the $p_i$ are orthogonal sharp effects that commute with $a$.
\end{theorem}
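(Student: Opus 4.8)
The plan is to reduce the statement to the commutative case already established in Proposition \ref{prop:commutativespectral} by passing to the commutative subalgebra $C(a)=\{a\}''$ generated by $a$. First I would observe that, by Proposition \ref{prop:bicommutant}, $C(a)$ is itself a commutative convex $\sigma$-SEA, hence a commutative SES, and that every element of $C(a)$ commutes with $a$: since $a\commu a$ we have $a\in\{a\}'$, and applying the order-reversing commutant gives $C(a)=\{a\}''\subseteq\{a\}'$; in particular $a\in C(a)$. Applying Proposition \ref{prop:commutativespectral} to $a$ regarded as an element of the commutative SES $C(a)$ then yields an increasing sequence $s_n=\sum_i\lambda_i p_i$ with $\lambda_i>0$ and the $p_i$ orthogonal sharp effects of $C(a)$, such that $a$ is both the supremum and the norm limit of the $s_n$ \emph{computed inside} $C(a)$.

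Next I would argue that the relevant properties of the $p_i$ are intrinsic and therefore survive the passage back to $E$. Sharpness is characterised by idempotency $p\mult p=p$ (Proposition \ref{prop:sharpproperties}), and orthogonality of $p_i$ and $p_j$ by $p_i\mult p_j=0$; both conditions refer only to the sequential product, which is the same operation whether evaluated in $C(a)$ or in $E$, so the $p_i$ remain orthogonal sharp effects of $E$. That they commute with $a$ is immediate from $p_i\in C(a)\subseteq\{a\}'$. Moreover the order-unit norm on $C(a)$ is the restriction of the norm on the order unit space $V$ associated to $E$ (Proposition \ref{prop:archconvous}), since both are computed from the same unit $1$ and the order on $C(a)$ is inherited from $E$; hence $\norm{s_n-a}\to 0$ holds in $E$ as well.

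The one point that does not transfer automatically is the supremum: a least upper bound computed inside the subalgebra $C(a)$ need not a priori remain a least upper bound in the larger algebra $E$. This is the main obstacle, and I would resolve it using Archimedeanness rather than any compatibility of infima. Since the $s_n$ increase to $a$ in $C(a)$ we have $s_n\leq a$ for all $n$, so $a$ is an upper bound of the sequence in $E$. If $b\in E$ is any other upper bound, then for each $\epsilon>0$ norm convergence supplies an $n$ with $a-s_n\leq\norm{a-s_n}1\leq\epsilon 1$, whence $a\leq s_n+\epsilon 1\leq b+\epsilon 1$. As $E$ is a convex $\sigma$-effect algebra it is Archimedian (Proposition \ref{prop:normalsum}), so its associated order unit space has a strong Archimedian unit and letting $\epsilon\to 0$ forces $a\leq b$. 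Thus $a=\vee s_n$ holds in $E$, which together with the preceding paragraphs completes the reduction and hence the proof.
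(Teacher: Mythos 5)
Your proof is correct and follows the same route as the paper's: pass to the commutative subalgebra $C(a)$ and apply Proposition \ref{prop:commutativespectral}. The paper's own proof is exactly this two-line reduction; your additional verifications (that sharpness, orthogonality, the norm estimate, and --- via Archimedeanness --- the supremum all transfer from $C(a)$ back to $E$) are details the paper leaves implicit, and you handle them correctly.
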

\begin{proof}
	$C(a)$ is a commutative SES by proposition \ref{prop:bicommutant}. The claim then follows by application of proposition \ref{prop:commutativespectral}.
\end{proof}
\begin{corollary}
	Let $E$ be a SES and let $V$ be its associated order unit space. Let $v\in V$ be an arbitrary element, then $v$ can be written as the norm limit of elements of the form $\sum_i \lambda_i p_i$ where $\lambda_i\in \R$ and $p_i$ are orthogonal sharp effects.
\end{corollary}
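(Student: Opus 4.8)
The plan is to treat this as a short corollary of the spectral theorem for effects (Theorem \ref{theor:spectraltheorem}), bridging the two gaps that separate the two statements: an arbitrary $v\in V$ need not lie in the unit interval $E=[0,1]_V$, and we are now allowed negative coefficients $\lambda_i$. The strategy is to shift and rescale $v$ into an effect, apply Theorem \ref{theor:spectraltheorem} there, and then transport the spectral approximation back to $v$.

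First I would use that $1$ is a strong order unit: for any $v\in V$ there is some $n\in\N$ with $-n1\leq v\leq n1$. Setting $w:=\frac{1}{2n}(v+n1)$ then produces an effect $w\in E$, with $v=2nw-n1$. Since scalar multiplication and translation by $n1$ are norm-continuous, it suffices to approximate $w$ in norm by finite sums of the required type and then push the approximation through the affine map $w\mapsto 2nw-n1$. Applying Theorem \ref{theor:spectraltheorem} to $w$ yields a norm-convergent sequence of finite sums $w_m=\sum_i \lambda_i^{(m)}p_i^{(m)}$ with $\lambda_i^{(m)}>0$ and the $p_i^{(m)}$ orthogonal sharp effects, so that $v=\lim_m(2nw_m-n1)$ in norm. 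It then only remains to rewrite each $2nw_m-n1$ in the desired form.

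The one step that needs genuine care — and the only content beyond Theorem \ref{theor:spectraltheorem} — is absorbing the affine term $-n1$ into the family of orthogonal sharp effects. For a fixed $m$, let $p:=\sum_i p_i^{(m)}$, which is sharp by Proposition \ref{prop:sharpproperties} (the sum of orthogonal sharp effects is sharp), and let $p^\perp=1-p$, which is again sharp since $p^\perp\mult(p^\perp)^\perp=p^\perp\mult p=0$. Because each $p_i^{(m)}\leq p$ is sharp, Proposition \ref{prop:sharpproperties} gives $p_i^{(m)}\mult p=p_i^{(m)}$, whence $p_i^{(m)}\mult p^\perp=p_i^{(m)}\mult 1-p_i^{(m)}\mult p=0$; thus $p^\perp$ is orthogonal to every $p_i^{(m)}$ and $\{p_1^{(m)},\ldots,p^\perp\}$ is an orthogonal family of sharp effects. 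Using $1=\sum_i p_i^{(m)}+p^\perp$ we obtain
\[
	2nw_m-n1=\sum_i\bigl(2n\lambda_i^{(m)}-n\bigr)\,p_i^{(m)}+(-n)\,p^\perp,
\]
which is precisely a finite real-linear combination of orthogonal sharp effects. Letting $m\to\infty$ expresses $v$ as the claimed norm limit.

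I do not expect a serious obstacle here: the substance is carried entirely by the already-proved spectral theorem, and the remaining work is the orthogonal-complement bookkeeping above that converts the shift $-n1$ into admissible spectral terms. The only points to verify carefully are that $p^\perp$ is sharp and orthogonal to each $p_i^{(m)}$ (both immediate from Proposition \ref{prop:sharpproperties}) and that the affine map preserves norm convergence, which is automatic.
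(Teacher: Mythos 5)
Your proof is correct and follows essentially the same route as the paper: shift $v$ by a multiple of $1$ into the positive cone, apply Theorem \ref{theor:spectraltheorem}, and absorb the constant term by adjoining the sharp effect $1-\sum_i p_i^{(m)}$ to the orthogonal family. Your version is in fact slightly more careful than the paper's, since you explicitly verify that this complement is sharp and orthogonal to the other $p_i^{(m)}$.
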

\begin{proof}
	If $v\geq 0$ we can rescale it such that $\norm{v}\leq 1$ and use the previous proposition. Otherwise we know that $v+\norm{v}1\geq 0$ so that we get $v+\norm{v}1 = \lim q_n$ which we rewrite to $v = \lim (q_n - \norm{v}1)$. Each $q_n$ is of the form $q_n = \sum_i \lambda_{n,i} p_{n,i}$ so we can write $q_n - \norm{v}1 = \sum_i (\lambda_{n,i} - \norm{v})p_{n,i} - \norm{v} (1-\sum_i p_{n,i})$.
\end{proof}
\textbf{Note:} Gudder also studied convex sequential effect algebras in \cite{gudder2018convex}. There the additional condition that the space is spectral was required. This result shows that in our setting we get this property for free.

It will be useful for future reference to have a name for elements of the form $\sum_i \lambda_i p_i$.
\begin{definition}\label{def:simpleeffect}
	Let $q$ be an effect in a SES $E$. We call $q$ $\emph{simple}$ when  $q=\sum_{i=1}^n \lambda_i p_i$ for some $n\in \N$ with all $\lambda_i>0$ and $p_i\neq 0$ sharp and orthogonal. We denote the subset of simple effects of $E$ by $E_0:=\{q\in E~; q \text{ simple}\}$ and similarly $V_0\subseteq V$ is the order unit space spanned by $E_0$.
\end{definition}
Theorem \ref{theor:spectraltheorem} shows that the set of simple effects $E_0$ is dense in $E$ (with respect to the topology induced by the norm) and similarly that $V_0$ is dense in $V$. In the sequel we will frequently use this fact.

Since all elements in an order unit space $V$ associated to a SES $E$ belong to a commutative subalgebra we have a functional calculus on this space: for any continuous function $f:\R\rightarrow\R$ and $a\in V$ we can define $f(a)$. In particular we can define a square root $\sqrt{a}$ for positive $a$ such that $\sqrt{a}\mult \sqrt{a} = a$. We also get some more non-trivial consequences.

\begin{definition}
	Let $V$ be an order unit space with positive cone $C$. We call $C$ a \emph{homogeneous} cone when for every two elements $a$ and $b$ in the interior of the cone $C$ there exists an order isomorphism $\Phi:V\rightarrow V$ such that $\Phi(a)=b$.
\end{definition}

\begin{theorem}\label{theor:homogen}
	Let $E$ be a SES and let $V$ be its associated order unit space. The positive cone of $V$ is homogeneous.
\end{theorem}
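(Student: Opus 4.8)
The plan is to build the required order isomorphism out of the left-multiplication operators $L_a(b)=a\mult b$, mimicking the fact that in the operator case $b\mapsto\sqrt{a}b\sqrt{a}=Q_{\sqrt a}(b)$ is an order isomorphism sending $1$ to $a$ precisely when $a$ is invertible.

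First I would identify the interior of the positive cone of $V$. In an order unit space the interior of the cone consists exactly of those $a$ with $a\geq\varepsilon 1$ for some $\varepsilon>0$. By the spectral theorem (Theorem \ref{theor:spectraltheorem}) together with the functional calculus available on $V$, such an $a$ has spectrum bounded away from $0$, so $x\mapsto 1/x$ is continuous there and produces an element $a^{-1}\in C(a)$ which is again interior and satisfies $a^{-1}\mult a=a\mult a^{-1}=1$. I would verify this last identity inside the commutative subalgebra $C(a)$: by Corollary \ref{cor:commkadison} it is the unit interval of some $C(X)$ with pointwise multiplication, in which $a$ is a function bounded away from $0$ and $a^{-1}$ is its pointwise reciprocal.

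Next I would show that for interior $a$ the map $L_a$ is a linear order isomorphism of $V$. Linearity in the second argument follows from Proposition \ref{prop:linearity} once the product is extended to the whole cone and then to $V$, and $L_a$ is positive since $a\mult b\geq 0$ whenever $b\geq 0$. The crucial point is that $L_{a^{-1}}$ is a two-sided inverse of $L_a$: since $a\commu a^{-1}$ (both lie in the commutative $C(a)$), axiom 4 of a SEA gives $a^{-1}\mult(a\mult c)=(a^{-1}\mult a)\mult c=1\mult c=c$ and symmetrically $a\mult(a^{-1}\mult c)=c$ for all $c$, where the identity, originally stated for effects, extends to the cone by positive scaling. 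Hence $L_a$ and $L_{a^{-1}}$ are mutually inverse positive linear bijections, so $L_a$ is an order isomorphism, and moreover $L_a(1)=a\mult 1=a$.

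Finally, given two interior elements $a,b$, I would set $\Phi:=L_b\circ L_{a^{-1}}$. As a composition of order isomorphisms it is itself an order isomorphism, and $\Phi(a)=L_b(a^{-1}\mult a)=L_b(1)=b$, which is exactly homogeneity. The main obstacle is the middle step, namely establishing that $L_a$ is genuinely invertible with $L_a^{-1}=L_{a^{-1}}$. Everything hinges on the associativity axiom being applicable, which is why it is essential to first secure $a^{-1}\in C(a)$ and the commutation $a\commu a^{-1}$, and to make sure the associativity identity survives the extension of the sequential product from effects to arbitrary positive elements of $V$.
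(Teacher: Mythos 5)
Your proposal is correct and follows essentially the same route as the paper's own proof: construct $a^{-1}$ via the functional calculus in the commutative subalgebra $C(a)$, use the associativity axiom for commuting effects to show $L_{a^{-1}}=L_a^{-1}$ so that $L_a$ is an order isomorphism, and then take $\Phi=L_b\circ L_{a^{-1}}$. The only difference is that you spell out a few verifications (the $C(X)$ picture of $C(a)$ and the extension of associativity to the cone) that the paper leaves implicit.
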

\begin{proof}
	An element $a\in V$ lies in the interior of the positive cone if and only if $\exists \epsilon>0$ such that $\epsilon 1\leq a$. Looking at the commutative subalgebra generated by $a$ (which contains $\epsilon 1$) we see that as a function $a:X\rightarrow \R$ we must have $a(x)\geq \epsilon$ for all $x\in X$. We can then define $f:[\epsilon, \norm{a}]\rightarrow [\norm{a}^{-1},\epsilon^{-1}]$ by $f(x)=x^{-1}$. This is a continuous function, so by application of the functional calculus there must exist a $f(a)\in V$. By construction $f(a)$ commutes with $a$ and we have $f(a)\mult a = a\mult f(a) = 1$ so that $f(a)=a^{-1}$. The multiplication operator $L_a: V\rightarrow V$ given by $L_a(b) := a\mult b$ is such that $(L_{a^{-1}}\circ L_a)(b) = a^{-1}\mult(a\mult b) = (a^{-1}\mult a)\mult b = 1\mult b = b$ so that $L_{a^{-1}} = L_a^{-1}$. Because both $L_a$ and $L_{a^{-1}}$ are positive maps, $L_a$ is an order isomorphism. For any other $b\in V$ in the interior of the positive cone (which will then also be invertible) we can then define the order isomorphism $\Phi = L_bL_{a^{-1}}$ which gives $\Phi(a) = b$.
\end{proof}

We will use this result about the homogeneity of the positive cone when we consider sequential effect spaces that are also Hilbert spaces in section \ref{sec:seshilb}.

\subsection{Sequential order}
In this section we will look at effects for which the sequential product map preserves the order on the effect algebra in both directions. The existence of these effects has important consequences for the uniqueness of the sequential product on the space.

\begin{definition}\label{def:sequentialorder}
	\cite{gudder2005uniqueness} Let $E$ be a sequential effect algebra. We call an effect $b\in E$ \emph{order preserving} when 
	\begin{itemize}
		\item for all $a\leq b$ there exists a $c\in E$ such that $b\mult c = a$,
		\item when $b\mult a\leq b\mult c$ then $\ceil{b}\mult a \leq \ceil{b}\mult c$.
	\end{itemize} 
	If all elements in $E$ are order preserving we call $E$ \emph{sequentially ordered}.
\end{definition}
\noindent \textbf{Note:} The effect $c$ such that we have $b\mult c =a$ for an $a\leq b$ is unique when we require additionally that $c\leq \ceil{b}$ \cite{gudder2005uniqueness}.

\begin{definition}
	Let $E$ be a SES with associated OUS $V$. We say an effect $b\in E$ has a \emph{pseudo-inverse} when there exists a $c\geq 0$ in $V$ such that $b\mult c = c\mult b = \ceil{b}$. If $b$ has a pseudo-inverse and $\ceil{b}=1$ then we call $b$ \emph{invertible}. We denote the pseudo-inverse of $b$ by $b^{-1}$ just like it were a regular inverse (like with order preserving effects, the pseudo-inverse is unique given that it is below $\ceil{b}$).
\end{definition}
\begin{proposition}
	An effect with a pseudo-inverse is order preserving.
\end{proposition}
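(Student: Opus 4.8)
The plan is to exploit the single algebraic fact packaged in the definition of the pseudo-inverse: the identity $b\mult b^{-1} = b^{-1}\mult b = \ceil{b}$ says in particular that $b\commu b^{-1}$, and by axiom 4 of a SEA this grants associativity whenever $b$ or $b^{-1}$ sits in the outer position. I would also record the two order-theoretic facts I will lean on: the multiplication is order-preserving in its second argument, so each $L_{b^{-1}}$ and $L_b$ is a positive map, and by Proposition \ref{prop:sharpproperties} we have $\ceil{b}\mult x = x$ for every $x\leq \ceil{b}$, in particular for every $x\leq b$.

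For the first condition, given $a\leq b$ I would propose the witness $c := b^{-1}\mult a$. First I would check $c\in E$: it is positive because $b^{-1}\geq 0$ and $a\geq 0$, and since $a\leq b$ order-preservation gives $c = b^{-1}\mult a \leq b^{-1}\mult b = \ceil{b}\leq 1$. Then the computation $b\mult c = b\mult(b^{-1}\mult a) = (b\mult b^{-1})\mult a = \ceil{b}\mult a = a$ finishes it, the middle equality being associativity (via $b\commu b^{-1}$) and the last using $a\leq b\leq \ceil{b}$.

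For the second condition, suppose $b\mult a\leq b\mult c$. I would apply the positive map $L_{b^{-1}}$ to both sides and then collapse with associativity, obtaining $\ceil{b}\mult a = b^{-1}\mult(b\mult a) \leq b^{-1}\mult(b\mult c) = \ceil{b}\mult c$, which is exactly the required inequality.

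The one genuine subtlety — the main obstacle — is that $b^{-1}$ lives in the positive cone of the associated order unit space and may have norm larger than $1$, whereas the SEA axioms (in particular the associativity of axiom 4 and the notion of compatibility) are phrased for effects in $[0,1]$. I would deal with this by rescaling: write $b^{-1} = \norm{b^{-1}}\,\tilde b$ with $\tilde b := \norm{b^{-1}}^{-1}b^{-1}$ an honest effect, note that $\tilde b\commu b$ follows from $b^{-1}\commu b$ together with the scalar-compatibility clause of Proposition \ref{prop:linearity}, apply axiom 4 to $\tilde b$, and finally pull the scalar back out using the bilinearity $(\lambda v)\mult w = \lambda(v\mult w)$ that underlies the extension of $\mult$ to the whole positive cone. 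Once this bookkeeping is in place, both displayed computations above go through verbatim.
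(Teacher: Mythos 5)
Your proof is correct and follows essentially the same route as the paper's: the witness $c=b^{-1}\mult a$, the associativity collapse $b\mult(b^{-1}\mult a)=(b\mult b^{-1})\mult a=\ceil{b}\mult a=a$, and applying $L_{b^{-1}}$ to both sides for the second condition. Your extra care about rescaling $b^{-1}$ into the unit interval is a reasonable bookkeeping point that the paper glosses over (it has already declared that the product extends to the whole positive cone by homogeneity), but it does not change the argument.
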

\begin{proof}
	Let $b$ have a pseudo-inverse denoted by $b^{-1}$, then when $a\leq b$ we can let $c=b^{-1}\mult a \leq b^{-1}\mult b = \ceil{b}$ so that $b\mult c = b\mult (b^{-1}\mult a) = (b\mult b^{-1})\mult a = \ceil{b}\mult a = a$. Similarly when $b\mult a \leq b\mult c$ we can multiply both sides on the left with $b^{-1}$ to get $\ceil{b}\mult a \leq \ceil{b}\mult c$.
\end{proof}

\begin{proposition} \label{prop:finiteranksequentialorder}
	Every element in $E_0$ has a pseudo-inverse. In particular if $E=E_0$, then $E$ will be sequentially ordered.
\end{proposition}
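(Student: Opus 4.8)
The plan is to exhibit an explicit pseudo-inverse for each simple effect. Given $q=\sum_{i=1}^n \lambda_i p_i\in E_0$ with the $p_i$ nonzero, sharp, pairwise orthogonal and all $\lambda_i>0$, the natural candidate is $c:=\sum_{i=1}^n \lambda_i^{-1}p_i$. Each term $\lambda_i^{-1}p_i$ lies in the positive cone of the associated order unit space $V$, so $c\geq 0$ in $V$. Note that $c$ need not lie in $E$ itself (the coefficients $\lambda_i^{-1}$ may exceed $1$), but this is harmless: the definition of pseudo-inverse only requires $c\geq 0$ in $V$, and the sequential product has already been extended to the whole positive cone just after Proposition \ref{prop:linearity}.

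The first step is to compute $q\mult c$. All the $p_i$, and hence $q$ and $c$, lie in the commutative subalgebra $C(q)$, so the product is bilinear there (additive in both arguments by commutativity, scalar-homogeneous by Proposition \ref{prop:linearity}). Expanding and using $p_i\mult p_j=0$ for $i\neq j$ (orthogonality) together with $p_i\mult p_i=p_i$ (sharpness, Proposition \ref{prop:sharpproperties}), the double sum collapses to $q\mult c=\sum_i \lambda_i\lambda_i^{-1}p_i=\sum_i p_i=:P$, and by commutativity $c\mult q=P$ as well. By Proposition \ref{prop:sharpproperties} the sum $P$ of pairwise orthogonal sharp effects is again sharp.

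The crux is then to identify $P$ with $\ceil{q}$. From $\lambda_i p_i=p_i\mult q\leq p_i\mult 1=p_i$ one gets $\lambda_i\leq 1$ (otherwise $p_i=0$), so $P-q=\sum_i(1-\lambda_i)p_i\geq 0$ and $P$ is a sharp effect above $q$; hence $\ceil{q}\leq P$. For the reverse inequality, $q\leq\ceil{q}$ gives $\ceil{q}\mult q=q$ by Proposition \ref{prop:sharpproperties}, so using associativity for the commuting pair $\ceil{q},q$ we find $\ceil{q}\mult P=\ceil{q}\mult(q\mult c)=(\ceil{q}\mult q)\mult c=q\mult c=P$; since $\ceil{q}$ is sharp and $P\mult\ceil{q}=\ceil{q}\mult P=P$ by commutativity, Proposition \ref{prop:sharpproperties} yields $P\leq\ceil{q}$. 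Thus $P=\ceil{q}$ and $q\mult c=c\mult q=\ceil{q}$, so $c$ is a pseudo-inverse of $q$. The remaining claim is immediate: if $E=E_0$ then every effect has a pseudo-inverse, hence is order preserving by the preceding proposition, so $E$ is sequentially ordered. I expect the only genuinely delicate point to be this identification of $P$ with the ceiling; as an alternative one may pass through the $C(X)$-representation of $C(q)$ (Corollary \ref{cor:commkadison} and Proposition \ref{prop:imclopen}), where $q$ becomes $\sum_i\lambda_i\chi_{S_i}$ over disjoint clopen sets and $\ceil{q}=\chi_{\bigcup_i S_i}=P$ is transparent.
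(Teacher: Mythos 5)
Your proof is correct and follows exactly the paper's route: the paper also takes $\sum_i\lambda_i^{-1}p_i$ as the explicit pseudo-inverse and simply asserts that $q\mult c=c\mult q=\sum_i p_i=\ceil{q}$ is "easy to verify", which is precisely the verification you carry out. The only detail worth tightening is the assertion $P\mult\ceil{q}=\ceil{q}\mult P$, which does hold but deserves a word (e.g.\ each $p_i\leq\ceil{q}$ since $\lambda_ip_i\leq q\leq\ceil{q}$ forces $\ceil{q}^\perp\mult p_i=0$, so $\ceil{q}$ commutes with every $p_i$ and hence with $P$).
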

\begin{proof}
	For a simple effect $b=\sum_i \lambda_i p_i$ with $\lambda_i>0$ and $p_i$ sharp orthogonal effects we let $b^{-1} = \sum_i \lambda_i^{-1} p_i$. It is then easy to verify that $b\mult b^{-1} = b^{-1}\mult b = \sum_i p_i = \ceil{b}$.
\end{proof}
	
It is not clear whether a sequential effect space is always sequentially ordered. That the sequential product in a von Neumann algebra is sequentially ordered can be demonstrated by first showing the existence of \emph{approximate pseudo-inverses} \cite{bramthesis} and then using the space's completeness in the strong operator topology. While we can show that approximate pseudo-inverses exist in a sequential effect space, we have no analogue to the strong operator topology. By proposition \ref{prop:finiteranksequentialorder} the set of pseudo-invertible elements in a SES lies dense so that in any case the sequential order property holds for almost all elements. For our purposes this is sufficient.

\begin{definition}
	\cite{cho2015quotient} Let $V$ be an order unit space and let $0\leq q\leq 1$ be an effect in $V$. A \emph{quotient} for $q$ is an order unit space $V_q$ with a positive linear map $\xi_q: V_q\rightarrow V$ that is \emph{initial} given the property $\xi_q(1)\leq q$, i.e.\ for any positive linear map $f:W\rightarrow V$ such that $f(1)\leq q$ there exists a unique positive sub-unital map $\cl{f}:W\rightarrow V_q$ such that $\xi_q\circ \cl{f} = f$.
\end{definition}

\begin{proposition}\label{prop:quotient}
	Let $E$ be a SES and let $V$ be the associated order unit space. Let $q\in E$ be an order preserving effect and let $V_q$ be the order ideal of $V$ generated by $\ceil{q}$: $V_q:=\{v\in V~;~ \exists n\in \N: -n\ceil{q}\leq v\leq n\ceil{q}\}$. Let $\xi_q: V_q\rightarrow V$ be defined by $\xi_q(a) = q\mult a$, then $\xi_q$ is a quotient for $q$.
\end{proposition}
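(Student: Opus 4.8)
The plan is to produce the factorising map $\bar f$ explicitly by ``dividing by $q$'', exploiting the order-preservation of $q$, and then to check that it is the unique positive sub-unital map doing the job. First I would record the basic facts about $V_q$. Its order unit is $p:=\ceil{q}$, and since $p$ is sharp, proposition \ref{prop:sharpproperties} gives $p\mult a = a$ for every effect $a\leq p$, hence by linearity $p\mult v = v$ for all $v\in V_q$. This already shows that $\xi_q$ is a well-defined positive linear map, and that its normalisation is correct: because $q\leq \ceil{q}=p$, proposition \ref{prop:sharpproperties} gives $\xi_q(p) = q\mult \ceil{q} = q\leq q$, as required of a quotient.

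Next, given a positive linear $f:W\rightarrow V$ with $f(1)\leq q$, I would build $\bar f$ first on effects. For an effect $w\in[0,1]_W$ we have $0\leq f(w)\leq f(1)\leq q$, so by the order-preservation of $q$ there is a \emph{unique} effect $c_w\leq \ceil{q}=p$ with $q\mult c_w = f(w)$; set $\bar f(w):=c_w$. By construction $c_w\in[0,p]$ is an effect of $V_q$ and $\xi_q(\bar f(w)) = q\mult c_w = f(w)$.

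The key step is additivity of $w\mapsto c_w$, and this is where I expect the only real obstacle. When $w_1+w_2$ is defined we get $q\mult(c_{w_1}+c_{w_2}) = f(w_1+w_2)\leq f(1)\leq q = q\mult p$, so the order-reflection clause of the order-preservation property yields $p\mult(c_{w_1}+c_{w_2})\leq p\mult p = p$; since $c_{w_i}\leq p$ gives $p\mult c_{w_i}=c_{w_i}$, this says precisely $c_{w_1}+c_{w_2}\leq p$. Thus $c_{w_1}+c_{w_2}$ is an effect below $p$ sent by $L_q$ to $f(w_1+w_2)$, and uniqueness of such an element forces $c_{w_1+w_2}=c_{w_1}+c_{w_2}$. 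Homogeneity $c_{\lambda w}=\lambda c_w$ follows in the same way from proposition \ref{prop:linearity}. Hence $\bar f$ is a morphism of (convex) effect algebras $[0,1]_W\rightarrow[0,p]$, which extends uniquely to a positive linear map $\bar f:W\rightarrow V_q$ (landing in $V_q$ since the order ideal is a linear subspace, and sub-unital because $\bar f(1)=c_1\leq p$), satisfying $\xi_q\circ\bar f = f$ by construction.

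Finally I would dispatch uniqueness. If $g:W\rightarrow V_q$ is any positive sub-unital map with $\xi_q\circ g = f$, then for an effect $w$ we have $0\leq g(w)\leq g(1)\leq p$ together with $q\mult g(w) = f(w)$, so $g(w)$ is an effect below $p$ mapped by $L_q$ to $f(w)$; uniqueness gives $g(w)=c_w=\bar f(w)$, and agreement on effects plus linearity gives $g=\bar f$. Everything apart from the additivity estimate is routine bookkeeping with proposition \ref{prop:linearity} and the standard extension of an effect-algebra morphism to the enveloping order unit spaces; the crux is the combined use of the order-reflection property and the sharpness of $\ceil{q}$ to keep $c_{w_1}+c_{w_2}$ below $\ceil{q}$, which is exactly what makes the uniqueness of pseudo-inverses applicable.
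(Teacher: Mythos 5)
Your proof is correct and follows essentially the same route as the paper's: define $\bar f(w)$ as the unique effect $c_w\leq \ceil{q}$ with $q\mult c_w = f(w)$, verify additivity and homogeneity, extend linearly, and get uniqueness from the uniqueness of the $c_w$'s. The one place you go beyond the paper is the additivity of $w\mapsto c_w$, which the paper dismisses as ``straightforward to verify''; your argument for it --- using the order-reflection clause of order-preservation together with the fact that $\ceil{q}\mult c = c$ for $c\leq\ceil{q}$ to keep $c_{w_1}+c_{w_2}$ below $\ceil{q}$ (after rescaling by $\tfrac12$ so the sum lives in $E$) --- is exactly the right way to fill that in.
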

\begin{proof}
	Of course $\xi_q(1) = q\leq q$ so that it satisfies the required condition. Now let $f:W\rightarrow V$ be a positive map such that $f(1)\leq q$ so that in particular we have $f(a)\leq q$ when $0\leq a\leq 1$. By assumption $q$ is order preserving so that there exists a unique $c_a\in E$ below $\ceil{q}$ such that $q\mult c_a = f(a)$. It is straightforward to verify that these $c$'s are `additive': $c_{a+b}=c_a+c_b$ and also that $c_{\lambda a} = \lambda c_a$. Define $\cl{f}: W\rightarrow V_q$ by $\cl{f}(a)= c_a$.  This is a positive linear sub-unital map and we have $\xi_q(\cl{f}(a)) = q\mult c_a = f(a)$ so that $\xi_q\circ \cl{f} = f$. Furthermore $\cl{f}$ is unique with this property due to the uniqueness of the $c_a$'s.
\end{proof}

\textbf{Note:} When $\xi_q$ is a quotient for $q$ and $\Theta$ is a unital order isomorphism, the map $\xi_q\circ \Theta$ is also a quotient for $q$. Because of the universal property of a quotient, \emph{any} two quotients for $q$ are related by a unital order isomorphism in this manner \cite{basthesis}. This observation is central to our characterisations of the sequential products.

\subsection{Sequential effect spaces with inner product}\label{sec:seshilb}
Sequential effect spaces seem to be closely related to Jordan algebras. In fact, the author isn't aware of any example of a sequential effect space that is not a Jordan algebra. In this section we will show that the addition on an inner product to a sequential effect space forces it to be a Euclidean Jordan algebra.

\begin{definition}\label{def:HilbertSES}
	We call a SES $E$ with associated order unit space $V$ a \emph{Hilbert} SES when it has a positive definite inner product $\inn{\cdot,\cdot}$ such that $V$ is complete with respect to the norm induced by this inner product (i.e.\ $V$ is a Hilbert space) and such that the sequential product is \emph{symmetric}: for all $a,b,c\in E$ we have $\inn{a\mult b, c} = \inn{b,a\mult c}$.
\end{definition}

\noindent Of course Euclidean Jordan algebra are examples of a Hilbert SES. The converse is true as well.

\begin{proposition}
	Let $E$ be a Hilbert SES, then the inner product of two positive elements is positive.
\end{proposition}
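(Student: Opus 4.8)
The plan is to reduce the two-variable statement to the single functional $\inn{z,1}$ and then exploit the existence of square roots coming from the functional calculus, bypassing any need for the spectral theorem.

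First I would specialise the symmetry axiom to $c=1$. For effects $a,b$ this gives
\[
\inn{a\mult b,1} = \inn{b,a\mult 1} = \inn{b,a} = \inn{a,b},
\]
where I used $a\mult 1 = a$ and the symmetry of the (real) inner product. Since the sequential product of two positive elements is again positive, $a\mult b\geq 0$, this already reduces the whole claim to showing that $\inn{z,1}\geq 0$ for every positive $z$.

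For this remaining inequality I would invoke the square root provided by the functional calculus on the associated order unit space: for $z\geq 0$ there is a positive $\sqrt z$ with $\sqrt z\mult\sqrt z = z$. Applying the symmetry axiom a second time,
\[
\inn{z,1} = \inn{\sqrt z\mult\sqrt z,\,1} = \inn{\sqrt z,\,\sqrt z\mult 1} = \inn{\sqrt z,\sqrt z}\geq 0,
\]
the final inequality being nothing but positive-definiteness of the inner product. Combining the two displays yields $\inn{a,b} = \inn{a\mult b,1}\geq 0$, as desired. One minor bookkeeping point is that $\sqrt z$ and $z$ should lie in the effect algebra for the symmetry axiom (stated for $E$) to apply verbatim; since $\norm{\sqrt z} = \sqrt{\norm{z}}$ one can rescale by a positive scalar and use bilinearity of both the product (Proposition \ref{prop:linearity}) and the inner product, so this is no obstruction.

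The main point is really to resist a more laborious route rather than to overcome a genuine difficulty. It is tempting to use the spectral theorem (Theorem \ref{theor:spectraltheorem}) to write $z$ as a norm limit of simple effects $\sum_i \lambda_i p_i$, verify $\inn{p_i,1} = \inn{p_i\mult p_i,1} = \inn{p_i,p_i}\geq 0$ for each sharp $p_i$, and then pass to the limit; but the limiting step would force a comparison between the order-unit norm and the Hilbert norm, and the natural estimate for that comparison is itself essentially the positivity of $\inn{\cdot,1}$ on the cone, making the argument circular. The identity $z=\sqrt z\mult\sqrt z$ sidesteps this entirely and renders the statement immediate.
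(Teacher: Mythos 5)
Your proposal is correct and follows essentially the same route as the paper: both specialise the symmetry axiom to $c=1$ to get $\inn{a,b}=\inn{a\mult b,1}$ and then write the positive element $a\mult b$ as $\sqrt{a\mult b}\mult\sqrt{a\mult b}$ so that one more application of symmetry yields $\inn{\sqrt{a\mult b},\sqrt{a\mult b}}\geq 0$. The only cosmetic difference is that the paper cites the spectral theorem to produce the square root where you cite the functional calculus, which is the same mechanism, and your rescaling remark is a harmless extra precaution.
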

\begin{proof}
	Let $a$ and $b$ be positive elements, then $a\mult b$ is also positive, so by application of the spectral theorem there exists a positive $\sqrt{a\mult b}$ such that $\sqrt{a\mult b}^2 = a\mult b$. Then $\inn{a,b} = \inn{1,a\mult b} = \inn{1,\sqrt{a\mult b}^2} = \inn{\sqrt{a\mult b},\sqrt{a\mult b}}\geq 0$.
\end{proof}
\begin{corollary}
	Let $E$ be a Hilbert SES, then the positive cone is self-dual with respect to the inner product: $a\geq 0 \iff \inn{a,b}\geq 0$ for all $b\geq 0$.
\end{corollary}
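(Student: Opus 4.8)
The forward implication is already in hand: it is precisely the preceding proposition, which says $a\geq 0$ implies $\inn{a,b}\geq 0$ for every $b\geq 0$. Writing $C$ for the positive cone and $C^*=\{a~;~\inn{a,b}\geq 0 \text{ for all } b\geq 0\}$ for its dual cone, this gives $C\subseteq C^*$, so the whole content of the corollary is the reverse inclusion $C^*\subseteq C$. The plan is to prove the contrapositive: if $a\not\geq 0$, I will exhibit a single positive $b$ with $\inn{a,b}<0$.

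To build that witness I would decompose $a$ into positive and negative parts using the functional calculus available on the associated order unit space. Passing to the commutative subalgebra $C(a)$, set $a^+=f_+(a)$ and $a^-=f_-(a)$ with $f_+(t)=\max(t,0)$ and $f_-(t)=\max(-t,0)$. These are positive elements of $C(a)$ satisfying $a=a^+-a^-$, and since $f_+\cdot f_-=0$ pointwise the isomorphism of $C(a)$ with a function space (theorem \ref{theor:commbasicallydisconnect}) yields the orthogonality $a^+\mult a^-=0$. If $a$ is not positive then $a^-\neq 0$, and $a^-$ is the witness I feed into the inner product.

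The crux of the argument is to show $\inn{a^+,a^-}=0$. Here I would combine the symmetry of the product with the orthogonality of the two parts. Since $a^+\leq \ceil{a^+}$, proposition \ref{prop:sharpproperties} gives $\ceil{a^+}\mult a^+=a^+$; and from $a^+\mult a^-=0$, axiom (3) of a SEA together with lemma \ref{lem:ceilzero} promote this to $\ceil{a^+}\mult a^-=0$. Symmetry of the product then gives
\[ \inn{a^+,a^-}=\inn{\ceil{a^+}\mult a^+,\,a^-}=\inn{a^+,\,\ceil{a^+}\mult a^-}=\inn{a^+,0}=0. \]
Hence $\inn{a,a^-}=\inn{a^+,a^-}-\inn{a^-,a^-}=-\norm{a^-}^2$, which is strictly negative once $a^-\neq 0$ by positive-definiteness of the inner product (definition \ref{def:HilbertSES}). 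This contradicts $a\in C^*$, so $a^-=0$ and $a=a^+\geq 0$.

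The step I expect to require the most care is the vanishing of $\inn{a^+,a^-}$, since it is the only place where all three ingredients genuinely interact: the symmetry of the product, the sequential orthogonality $a^+\mult a^-=0$, and lemma \ref{lem:ceilzero} upgrading orthogonality with $a^+$ to orthogonality with its ceiling. Everything else — that the functional-calculus parts land in $V$, that pointwise disjointness of supports transfers to $a^+\mult a^-=0$ under the isomorphism of $C(a)$ with a $C(X)$, and that positive-definiteness forces $a^-=0$ — is routine, and I would only verify it briefly.
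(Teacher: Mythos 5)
Your proof is correct and follows the paper's own argument: forward direction from the preceding proposition, then decompose $a=a^+-a^-$ into orthogonal positive parts and test against $b=a^-$. The only difference is that you derive $\inn{a^+,a^-}=0$ via $\ceil{a^+}$ and lemma \ref{lem:ceilzero}, where the paper gets it in one line from $\inn{a^+,a^-}=\inn{a^+\mult 1,a^-}=\inn{1,a^+\mult a^-}=\inn{1,0}=0$; both are valid, yours is just a slight detour.
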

\begin{proof}
	The forward implication follows by the previous proposition. If $a\mult b = 0$ then $\inn{a,b} = \inn{1,a\mult b} =\inn{1,0}=0$ so that orthogonal effects are also orthogonal with respect to the inner product. We can write any element $a$ of $V$ as $a=a^+-a^-$ where $a^+$ and $a^-$ are orthogonal positive elements and $a^-=0$ if and only if $a\geq 0$. Suppose $\inn{a,b}\geq 0$ for all $b\geq 0$, then by taking $b=a^-$ we see that we indeed must have $a^-=0$.
\end{proof}

\begin{theorem}\label{theor:hilbertisEJA}
	An order unit space $V$ is a Euclidean Jordan algebra if and only if its unit interval is a Hilbert SES.
\end{theorem}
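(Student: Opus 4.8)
The plan is to prove the two implications separately; the forward direction is essentially a verification, while the reverse direction carries the real content and is where the structural results developed above pay off.

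For the forward direction, suppose $V$ is a Euclidean Jordan algebra. By the example above its unit interval, equipped with $a\mult b = Q_{\sqrt a}(b)$, is already a convex $\sigma$-SEA, and $V$ is by hypothesis a Hilbert space, so the only thing left is symmetry of the sequential product. I would deduce this from self-adjointness of the quadratic representation. Writing $L_a(b)=a*b$, the EJA axiom $\inn{a*b,c}=\inn{b,a*c}$ says precisely that each $L_a$ is self-adjoint, and since $Q_a = 2L_a^2 - L_{a^2}$ it follows that $Q_{\sqrt a}$ is self-adjoint as well. Hence $\inn{a\mult b,c} = \inn{Q_{\sqrt a}b,c} = \inn{b,Q_{\sqrt a}c} = \inn{b,a\mult c}$, so the unit interval is a Hilbert SES.

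For the reverse direction, suppose the unit interval $E$ of $V$ is a Hilbert SES and let $C$ denote the positive cone of $V$. The strategy is to recognise $C$ as a \emph{symmetric cone} and invoke the Koecher--Vinberg correspondence. Two of the required ingredients are already in hand: Theorem \ref{theor:homogen} shows that $C$ is homogeneous, and the corollary just proved shows that $C$ is self-dual with respect to the inner product $\inn{\cdot,\cdot}$. Note that self-duality also forces $C$ to be closed, as a dual cone is an intersection of closed half-spaces, and that $1$ is an interior point of $C$, so $C$ has nonempty interior. A homogeneous, self-dual cone with these properties is exactly the input of the Koecher--Vinberg theorem, which produces on $V$ a commutative bilinear product $*$ with unit $1$ satisfying the Jordan identity, for which $\inn{\cdot,\cdot}$ is associative and whose cone of squares is $C$. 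This is precisely the statement that $V$ is a Euclidean Jordan algebra.

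The delicate point is the application of Koecher--Vinberg itself. In its classical form it is stated for finite-dimensional cones, whereas the footnote accompanying the definition of an EJA permits infinite dimensions, so I would appeal to an infinite-dimensional version of the correspondence for self-dual homogeneous cones in a real Hilbert space. The technical care needed is to check compatibility of the two topologies at play: the order isomorphisms furnished by Theorem \ref{theor:homogen} are built from the order-unit norm, and one must ensure that the automorphism group acting transitively on the interior of $C$ behaves well with respect to the Hilbert-space inner product so that the infinite-dimensional form of the theorem applies. Closedness of $C$ coming from self-duality, together with completeness of $V$ as a Hilbert space, are the facts that make this reconciliation possible.
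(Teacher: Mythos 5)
Your proposal is correct and takes essentially the same route as the paper: the forward direction is a direct verification of symmetry from self-adjointness of $Q_{\sqrt{a}}$, and the reverse direction combines the homogeneity of the cone from Theorem \ref{theor:homogen} with the self-duality corollary and then invokes an infinite-dimensional Koecher--Vinberg theorem (the paper cites Chu's generalisation for self-dual homogeneous cones in a real Hilbert space, which is exactly the version you describe needing).
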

\begin{proof}
	The unit interval of an EJA is a Hilbert SES so we only need to show the other direction. By theorem \ref{theor:homogen} the space is homogeneous, and by the previous corollary the space is self-dual with respect to the inner product. Chu proved in \cite{chu2017infinite} a generalisation of the Koecher–Vinberg theorem that is also valid in infinite-dimension, i.e.\ that an ordered Hilbert space that is self-dual with respect to an inner product and is homogeneous is a Euclidean Jordan algebra.
\end{proof}

\noindent We will see in theorem \ref{theor:uniquesymmetricproduct} that in a Hilbert SES the sequential product has to agree with the standard sequential product of a Euclidean Jordan algebra.

\section{Uniqueness of the sequential product}\label{sec:uniqueness}
A sequential product on a sequential effect space is in general not unique. For instance, it is not unique on the space of effects of a Hilbert space since for any $t\in \R$ the map $b\mapsto \sqrt{a}a^{ti}ba^{-ti}\sqrt{a}$ defines a sequential product \cite{weihua2009uniqueness}. We can however show that different sequential products have to be very closely related. In this section we will establish a few additional properties that characterise the standard sequential product on those spaces. For the duration of the section we let $\mult$ and $\mult^\prime$ be two sequential products on a given SES $E$ with associated order unit space $V$. We denote their multiplication operators by respectively $L_a(b)=a\mult b$ and $L_a^\prime(b)=a\mult^\prime b$.

\begin{definition}
	 We call a sequential product $\mult$ \emph{normal} when $a\mult \wedge b_i = \wedge (a\mult b_i)$ and $a\commu b_i\implies a\commu \wedge b_i$, i.e. when it satisfies the conditions postulated in the definition of a $\sigma$-SEA.
\end{definition}
By definition any sequential effect space allows a sequential product that is normal, but in general such a space might also allow other sequential products that are not normal. For sequential products the following propositions are only proved with respect to simple effects, while with the additional assumption of normality the conclusions extend to arbitrary effects.

\begin{proposition}\label{prop:commutingeffects}
	Let $a,b \in E_0$ be simple. If $a\mult b = b\mult a$ then $a\mult^\prime b = a\mult b$.
	 In particular if elements commute in terms of one sequential product, they commute in terms of the other. If $\mult$ and $\mult^\prime$ are normal, then this property holds for all $a,b\in E$.
\end{proposition}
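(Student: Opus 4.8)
The plan is to reduce the statement to the case where both effects are sharp, settle that case directly, and then reassemble general simple effects using additivity in the second argument together with SEA axiom 5. The facts I want to exploit throughout are that sharpness is an order-theoretic notion (Proposition \ref{prop:sharpproperties}: $a$ is sharp iff $a\wedge a^\perp = 0$) and hence is shared by $\mult$ and $\mult^\prime$, and that the way a sharp effect acts on effects below itself or below its complement is likewise product-independent: for sharp $p$ one has $p\mult c = c$ whenever $c\leq p$, and $p\mult c = 0$ whenever $c\leq p^\perp$ (the latter from $p\mult p^\perp = 0$ and monotonicity), and the same statements hold for $\mult^\prime$.

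First I would treat the sharp case. Let $p,q$ be sharp with $p\mult q = q\mult p$. By Proposition \ref{prop:sharpproperties} this common value is the meet $r := p\wedge q$, and since $q\commu p$ we also get $q\commu p^\perp$ (axiom 4), so $t := q\mult p^\perp = q\wedge p^\perp$ is sharp. Additivity of $\mult$ in the second argument then yields the orthogonal decomposition $q = q\mult(p+p^\perp) = r + t$ with $r\leq p$ and $t\leq p^\perp$. Feeding this into $\mult^\prime$ and using additivity of $\mult^\prime$ in its second argument gives $p\mult^\prime q = p\mult^\prime r + p\mult^\prime t = r + 0 = r$, since $r\leq p$ forces $p\mult^\prime r = r$ while $t\leq p^\perp$ forces $p\mult^\prime t \leq p\mult^\prime p^\perp = 0$. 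The symmetric computation, decomposing $p = (p\wedge q) + (p\wedge q^\perp)$, gives $q\mult^\prime p = r$ as well. Hence $p\mult^\prime q = q\mult^\prime p = p\wedge q = p\mult q$, so the two products agree on commuting sharp effects and the commutation relation itself transfers.

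For simple $a = \sum_i\lambda_i p_i$ and $b = \sum_j \mu_j q_j$ with $a\mult b = b\mult a$, I would first note that, after merging terms with equal coefficients, the projections $p_i$ are determined by $a$ via the functional calculus and so lie in $C(a)=\{a\}^{\prime\prime}$, and likewise $q_j\in C(b)$. Thus $a\commu b$ propagates through the bicommutant (Proposition \ref{prop:bicommutant}) to give $p_i\commu q_j$ for all $i,j$ with respect to $\mult$. The sharp case then yields $p_i\mult^\prime q_j = p_i\wedge q_j = p_i\mult q_j$ together with $p_i\commu^\prime q_j$. To lift commutation up to $a$ I use Proposition \ref{prop:linearity} ($q_j\commu^\prime p_i \Rightarrow q_j\commu^\prime \lambda_i p_i$) and axiom 5 to conclude $q_j\commu^\prime a$, and once more to obtain $a\commu^\prime b$. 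Now both $a\mult^\prime b$ and $a\mult b$ can be computed by the same scheme: push the finite sum out of the second argument, swap the two factors using the just-established commutations, push the remaining sum out of the second argument again, so that both collapse to $\sum_{i,j}\lambda_i\mu_j\,(p_i\wedge q_j)$. This proves $a\mult^\prime b = a\mult b$, and exchanging the roles of $\mult$ and $\mult^\prime$ gives the equivalence of compatibility.

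The main obstacle is precisely that the product is additive only in its second argument, so one cannot naively expand $\bigl(\sum_i\lambda_i p_i\bigr)\mult^\prime q_j$; this is exactly what forces the detour through the sharp case and the use of axiom 5 to move a commutation relation off the generators $p_i$ and onto $a$ before any expansion in the first slot becomes legitimate. Finally, for the normality statement I would argue by approximation: assuming $a\mult b = b\mult a$, work inside the commutative SES $\{a,b\}^{\prime\prime}$ and choose, via the spectral theorem (Theorem \ref{theor:spectraltheorem}), increasing sequences of simple effects $a_n\uparrow a$ and $b_m\uparrow b$ there, all mutually commuting. The simple case gives $a_n\mult^\prime b_m = a_n\mult b_m$ and $a_n\commu^\prime b_m$; normality, which by complementation and Proposition \ref{prop:normalsum} also preserves suprema of increasing sequences and preserves commutation under such limits, then lets me pass to the limit in each argument in turn, using the transferred commutations to move each limit into the second slot, yielding $a\mult^\prime b = a\mult b$ for all $a,b$.
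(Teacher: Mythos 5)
Your proof is correct and follows the same strategy as the paper's: settle the case of commuting sharp effects first, extend to simple effects by expanding in the second argument after transferring commutativity to the constituent projections, and pass to limits via normality for general effects. You are in fact more careful than the paper at two points it leaves implicit — the decomposition $q=(p\wedge q)+(q\wedge p^\perp)$, which yields $p\mult^\prime q=p\wedge q$ without presupposing $p\commu q$ with respect to $\mult^\prime$, and the bicommutant/functional-calculus argument showing that $a\commu b$ forces the spectral projections $p_i$ and $q_j$ to commute.
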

\begin{proof}
	First note that when $a$ and $b$ are commuting sharp effects $a\mult b = a\wedge b = a\mult^\prime b$ by proposition \ref{prop:sharpproperties}. Therefore when $a$ and $b$ are of the form $a=\sum_i \lambda_i p_i$ and $b=\sum_j \mu_j q_j$ where all the $p_i$ and $q_j$ are commuting sharp effects we see that $a\mult b = b\mult a$ and $a\mult^\prime b = b\mult^\prime a = a\mult b$. 

	Now suppose $\mult$ and $\mult^\prime$ are normal. Let $b\in E$ and $a\in E_0$ such that $a\mult^\prime b = b\mult^\prime a$. We can write $b$ as the supremum of elements $b_n\in E_0$ such that also $b_n\mult^\prime a = a\mult^\prime b_n$ and by the previous point we then also have $b_n\mult a = a\mult b_n = a\mult^\prime b_n$. By normality $a\mult^\prime b = a\mult^\prime \wedge b_n = \wedge(a\mult^\prime b_n) = \wedge(a\mult b_n)=a\mult \wedge b_n = a\mult b$ and since $a\commu b_n$ also $a\commu \wedge b_n =b$. The argument can now be repeated but with $a\in E$ instead of $E_0$.
\end{proof}

\begin{proposition}\label{prop:relatedsequentialproducts}
	Let $\mult$ and $\mult^\prime$ be sequential products on a SES $E$ and let $q\in E$ be an order preserving effect with respect to both products. Let $V_q$ denote the order-ideal generated by $\ceil{q}$. There exists a unital order isomorphism $\Theta_q: V_q\rightarrow V_q$ such that $q\mult^\prime a = q\mult (\Theta_q(a))$ for all $a\leq \ceil{q}$.
\end{proposition}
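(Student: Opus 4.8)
The plan is to recognise $q\mult^\prime(-)$ and $q\mult(-)$ as two quotients for the \emph{same} effect $q$ and then let the universal property of quotients manufacture the isomorphism $\Theta_q$ for me. The first thing to pin down is that the two quotients share a domain. Sharpness is purely order-theoretic (an effect $a$ is sharp exactly when $a\wedge a^\perp = 0$), so the ceiling $\ceil{q}$---the least sharp effect above $q$---does not depend on which sequential product is used, and neither does the order ideal $V_q$ it generates. Thus both $\xi_q,\xi_q^\prime:V_q\to V$, given by $\xi_q(a)=q\mult a$ and $\xi_q^\prime(a)=q\mult^\prime a$, are well defined on the same space.

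Next I would invoke Proposition \ref{prop:quotient} separately for each product: since $q$ is order preserving with respect to $\mult$ the map $\xi_q$ is a quotient for $q$, and since $q$ is order preserving with respect to $\mult^\prime$ the map $\xi_q^\prime$ is as well. The unit of $V_q$ is $\ceil{q}$, and as $q\leq\ceil{q}$ with $\ceil{q}$ sharp, Proposition \ref{prop:sharpproperties} gives $\xi_q(\ceil{q}) = q\mult\ceil{q} = q$ and likewise $\xi_q^\prime(\ceil{q})=q$; in particular both quotient maps send the unit to $q\leq q$, so each is eligible to play the role of the test map in the other's universal property.

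I would then run the universal property in both directions. Feeding $\xi_q^\prime$ into the initiality of $\xi_q$ produces a unique positive subunital $\Theta_q:V_q\to V_q$ with $\xi_q\circ\Theta_q=\xi_q^\prime$, and symmetrically a positive subunital $\Theta_q^\prime$ with $\xi_q^\prime\circ\Theta_q^\prime=\xi_q$. The composite $\Theta_q\circ\Theta_q^\prime$ satisfies $\xi_q\circ(\Theta_q\circ\Theta_q^\prime)=\xi_q$, so the uniqueness clause forces $\Theta_q\circ\Theta_q^\prime=\id$, and likewise $\Theta_q^\prime\circ\Theta_q=\id$; hence $\Theta_q$ is a bijection with positive inverse, i.e.\ an order isomorphism. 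Unitality then drops out: from $\Theta_q(\Theta_q^\prime(1))=1$ and $\Theta_q^\prime(1)\leq 1$ one gets $1\leq\Theta_q(1)\leq 1$. For any $a\leq\ceil{q}$ this yields $q\mult^\prime a = \xi_q^\prime(a) = \xi_q(\Theta_q(a)) = q\mult\Theta_q(a)$, as claimed.

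The conceptual content is entirely in the note following Proposition \ref{prop:quotient}, so no hard estimate is involved; the one place that needs genuine care is upgrading the map delivered by the universal property from a mere positive subunital map to a full \emph{unital order isomorphism}. That is exactly why I would construct its inverse explicitly and read off unitality from the two identities $\Theta_q\circ\Theta_q^\prime=\Theta_q^\prime\circ\Theta_q=\id$, rather than trying to argue it directly.
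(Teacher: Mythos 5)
Your proposal is correct and follows essentially the same route as the paper: the paper's proof simply observes that both $q\mult(-)$ and $q\mult^\prime(-)$ are quotients for $q$ (Proposition \ref{prop:quotient}) and appeals to the note that any two quotients are related by a unital order isomorphism, which is exactly the two-way universal-property argument you spell out. Your explicit verification that $\ceil{q}$ and $V_q$ are product-independent and that the map delivered by initiality is genuinely unital is just an unpacking of what the paper leaves implicit.
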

\begin{proof}
	The product map of an order preserving element is a quotient by proposition \ref{prop:quotient}. By the universal property of the quotient the required unital order isomorphism must then exist.
\end{proof}
Since the inverse of an effect commutes with the effect itself, we see that by proposition \ref{prop:commutingeffects} inverses are independent of the chosen sequential product: $q^{-1}\mult q = q^{-1}\mult^\prime q = 1$ (if the products are normal, otherwise this only holds for simple effects). Since invertible effects are order preserving there exists a unital order isomorphism $\Theta_q:V\rightarrow V$ for any invertible $q$ such that $q\mult^\prime a = q\mult (\Theta_q(a))$ for all $a\in V$. Written in terms of the multiplication operators: $L_q^\prime = L_q\Theta_q$

For the rest of the section we will let $\Theta_q$ denote the order isomorphism on $V_q$ of a order preserving $q$ so that $q\mult^\prime a= q\mult (\Theta_q(a))$ for $a\leq \ceil{q}$.

\begin{proposition}
	Let $0< \lambda \leq 1$, then $\Theta_{\lambda q} = \Theta_q$.
\end{proposition}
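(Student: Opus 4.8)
The plan is to first pin down that $\Theta_{\lambda q}$ and $\Theta_q$ live on the same space, and then to compare the two defining identities after stripping off the scalar $\lambda$. For $\Theta_{\lambda q}$ to be defined at all, $\lambda q$ must be order preserving with respect to both products; I would note first that this is inherited from $q$, since $a\leq \lambda q$ forces $\lambda^{-1}a\leq q\leq 1$, so a $c\leq\ceil{q}$ with $q\mult c=\lambda^{-1}a$ yields $(\lambda q)\mult c = \lambda(q\mult c)=a$, and the second clause of Definition \ref{def:sequentialorder} transfers after cancelling $\lambda$. The domains then agree because $\ceil{\lambda q}=\ceil{q}$ for $0<\lambda\leq 1$: by Proposition \ref{prop:sharpproperties} a sharp $p$ satisfies $p\mult(\lambda q)=\lambda q$ iff $\lambda(p\mult q)=\lambda q$ iff $p\mult q=q$ (cancelling $\lambda$ in the ambient vector space) iff $q\leq p$, so the smallest sharp effect above $\lambda q$ coincides with that above $q$. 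Hence $V_{\lambda q}=V_q$ and both $\Theta_{\lambda q}$ and $\Theta_q$ are unital order isomorphisms of the same ideal $V_q$ onto itself.

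Next I would use linearity of both products in the first argument (Proposition \ref{prop:linearity}). For an effect $a\leq\ceil{q}$, chaining the defining property of $\Theta_{\lambda q}$ and then of $\Theta_q$ gives
\[ \lambda\,(q\mult \Theta_{\lambda q}(a)) = (\lambda q)\mult \Theta_{\lambda q}(a) = (\lambda q)\mult^\prime a = \lambda\,(q\mult^\prime a) = \lambda\,(q\mult \Theta_q(a)). \]
Since $\lambda>0$ and $V$ is a vector space I can cancel $\lambda$ to obtain $q\mult \Theta_{\lambda q}(a) = q\mult \Theta_q(a)$.

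Finally I would exploit that $q$ is order preserving with respect to $\mult$. For effects $x,y\leq\ceil{q}$ one has $\ceil{q}\mult x=x$ and $\ceil{q}\mult y=y$ by Proposition \ref{prop:sharpproperties}, so the second clause of Definition \ref{def:sequentialorder} turns $q\mult x\leq q\mult y$ into $x\leq y$; applying this in both directions shows $L_q$ is injective on $[0,\ceil{q}]$, hence on $V_q$ by linearity. As unital order isomorphisms of $V_q$, both $\Theta_{\lambda q}$ and $\Theta_q$ send $a$ into $[0,\ceil{q}]$, so injectivity upgrades the previous identity to $\Theta_{\lambda q}(a)=\Theta_q(a)$ for every effect $a\leq\ceil{q}$, and then to $\Theta_{\lambda q}=\Theta_q$ on all of $V_q$ by linearity. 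The only mildly delicate points are the two cancellation-type steps, $\ceil{\lambda q}=\ceil{q}$ and the injectivity of $L_q$; both reduce to facts already recorded (linearity plus cancellativity, and the order-preservation hypothesis on $q$), so I expect no real obstacle beyond the bookkeeping of confirming that the elements involved genuinely lie in $[0,\ceil{q}]$ where these tools apply.
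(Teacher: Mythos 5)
Your proposal is correct and follows essentially the same route as the paper: the same chain of equalities $\lambda(q\mult\Theta_{\lambda q}(a)) = (\lambda q)\mult^\prime a = \lambda(q\mult\Theta_q(a))$, cancellation of $\lambda$, and then order-preservation of $q$ together with the fact that $\ceil{q}$ acts as the identity on $V_q$ to conclude $\Theta_{\lambda q}=\Theta_q$. The only difference is that you explicitly verify the preliminary facts ($\lambda q$ is order preserving and $\ceil{\lambda q}=\ceil{q}$) which the paper leaves implicit; these verifications are correct.
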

\begin{proof}
	We have $\lambda (q\mult (\Theta_q(a)))=\lambda (q\mult^\prime a) = (\lambda q)\mult^\prime a = (\lambda q)\mult(\Theta_{\lambda q}(a)) = \lambda (q\mult(\Theta_{\lambda q}(a)))$. Multiplying by $\lambda^{-1}$ and the assumption that $q$ is order preserving gives $\ceil{q}\mult (\Theta_q(a)) = \ceil{q}\mult(\Theta_{\lambda q}(a))$ for all $a\leq \ceil{q}$. Since $\ceil{q}$ acts as the identity on $V_q$ we conclude that $\Theta_q = \Theta_{\lambda q}$.
\end{proof}
Since the scalar factor doesn't matter we can define a $\Theta_q$ for any positive order preserving $q$ regardless of normalisation by setting $\Theta_q:= \Theta_{\norm{q}^{-1}q}$.

\begin{proposition}\label{prop:automorphismcommute}
	For $a,b\in E_0$ commuting and $b\leq \ceil{a}$, we get $\Theta_a(b) = b$. If the sequential products are normal this holds for all commuting $a$ and $b$ when $a$ is order preserving.
\end{proposition}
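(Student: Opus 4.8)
The plan is to play the defining relation of the order isomorphism $\Theta_a$ against the fact that commuting effects multiply the same way under both products, and then to strip off the leading factor $a$ using that $a$ is order preserving. First I would record that since $a\in E_0$, it has a pseudo-inverse by proposition \ref{prop:finiteranksequentialorder}, and the sharp orthogonal spectral data of $a$ are the same for both products, so $a$ is order preserving with respect to both $\mult$ and $\mult^\prime$. Hence $\Theta_a$ is defined on $V_a$ and satisfies $a\mult^\prime c = a\mult(\Theta_a(c))$ for all $c\leq \ceil{a}$. Because $a$ and $b$ are commuting simple effects, proposition \ref{prop:commutingeffects} gives $a\mult^\prime b = a\mult b$, and substituting $c=b$ into the defining relation of $\Theta_a$ yields the key identity $a\mult b = a\mult(\Theta_a(b))$.

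The second step is to cancel the leading $a$. Since $a$ is order preserving, $a\mult x\leq a\mult y$ implies $\ceil{a}\mult x\leq \ceil{a}\mult y$; applying this to both inequalities encoded in the equality $a\mult b = a\mult(\Theta_a(b))$ gives $\ceil{a}\mult b = \ceil{a}\mult(\Theta_a(b))$. Now $b\leq \ceil{a}$ by hypothesis, and $\Theta_a(b)\leq \Theta_a(\ceil{a}) = \ceil{a}$ because $\Theta_a$ is a unital order isomorphism of $V_a$, whose unit is $\ceil{a}$. By proposition \ref{prop:sharpproperties} the sharp effect $\ceil{a}$ acts as the identity on every effect below it, so the identity collapses to $b = \Theta_a(b)$, which is the claim.

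For the normal case I expect the same argument to go through essentially verbatim, and this is the point I would flag as the only thing needing care rather than a genuine obstacle. Simplicity of $a$ and $b$ was used in exactly one place, namely to invoke proposition \ref{prop:commutingeffects}; under normality that proposition already holds for all commuting $a,b\in E$, so the key identity $a\mult b = a\mult(\Theta_a(b))$ is available directly with no approximation of $b$ by simple effects. The cancellation step needs only that $a$ be order preserving, and the final collapse needs only $b\leq\ceil{a}$ together with $\Theta_a(b)\leq\ceil{a}$, the latter holding again by unitality of $\Theta_a$. Thus the conclusion $\Theta_a(b)=b$ follows for every order preserving $a$ and every commuting $b\leq\ceil{a}$, with the entire content lying in correctly combining the quotient/isomorphism relation with the order-preserving cancellation.
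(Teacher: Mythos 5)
Your proof is correct and follows essentially the same route as the paper's: invoke proposition \ref{prop:finiteranksequentialorder} to get that $a$ is order preserving, use proposition \ref{prop:commutingeffects} to obtain $a\mult b=a\mult^\prime b=a\mult(\Theta_a(b))$, cancel via the order-preserving property to get $\ceil{a}\mult b=\ceil{a}\mult(\Theta_a(b))$, and conclude since $\ceil{a}$ acts as the identity on effects below it. Your extra remarks (that $a$ is order preserving with respect to both products, and that $\Theta_a(b)\leq\ceil{a}$ by unitality of $\Theta_a$ on $V_a$) only make explicit what the paper leaves implicit.
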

\begin{proof}
	Since $a$ is in $E_0$ it is order preserving (proposition \ref{prop:finiteranksequentialorder}). By proposition \ref{prop:commutingeffects} $a\mult b = a\mult^\prime b = a\mult(\Theta_a(b))$. Because $a$ is order preserving we then get $\ceil{a}\mult b = \ceil{a}\mult(\Theta_a(b))$. Since $b$ and $\Theta_a(b)$ are below $\ceil{a}$ we conclude that $b = \Theta_a(b)$.
\end{proof}

We would like to show that for $q$ with $\ceil{q}=1$ we get $L_q\Theta_q=\Theta_qL_q$ as this would imply that $\Theta_{a\mult b} = \Theta_a\Theta_b$ for commuting $a$ and $b$ (see proposition \ref{prop:isomorphismgroup}). Unfortunately it is not clear whether this property is true for every pair of sequential products on a SES. We can however prove this for an important class of sequential effect spaces.

\begin{definition}
	Let $E$ be a SES and let $V$ be its associated order unit space. A sequential product $\mult$ on $E$ is called \emph{invariant} if for all unital order isomorphisms $\Phi: V\rightarrow V$ we have $\Phi(a\mult b) = \Phi(a)\mult\Phi(b)$. Written in terms of the multiplication operators: $\Phi L_a = L_{\Phi(a)} \Phi$.
\end{definition}
On a von Neumann algebra or a Euclidean Jordan algebra, the unital order isomorphisms are precisely the Jordan isomorphisms (bijective linear maps preserving the Jordan product). Since the standard sequential product on these spaces can be defined in terms of the Jordan product we see that the standard sequential product is indeed invariant. It is currently an open question whether every SES allows an invariant sequential product.

\begin{proposition}\label{prop:isomorphismgroup}
	Let $E$ be a SES with an invariant sequential product $\mult$ and let $\mult^\prime$ be any other sequential product (not necessarily invariant) and write $L_a^\prime = L_a\Theta_a$.  Let $a,b\in E_0$ be simple commuting invertible effects (or if $\mult$ and $\mult^\prime$ are normal, any invertible commuting $a$ and $b$).
	\begin{enumerate}
		\item $\Theta_b L_a = L_a\Theta_b$.
		\item $\Theta_{a\mult b} =\Theta_{a\mult^\prime b} = \Theta_a\Theta_b=\Theta_b\Theta_a$.
		\item $\Theta_a^{-1} = \Theta_{a^{-1}}$.
	\end{enumerate}
\end{proposition}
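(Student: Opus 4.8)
The plan is to prove the three items in sequence, using item~1 as the engine for items~2 and~3. Throughout I would lean on two elementary facts about multiplication operators: since $L_c(1)=c$, we have $L_c=L_{c'}$ if and only if $c=c'$; and for commuting effects axiom~(4) of a SEA gives associativity, so that $L_aL_b=L_{a\mult b}$ whenever $a\commu b$, and likewise $L_a'L_b'=L_{a\mult^\prime b}'$. I would also use that the product of two commuting invertible simple effects is again a simple invertible effect (it lives in the commutative subalgebra $\{a,b\}^{\prime\prime}$ and is bounded below by a positive multiple of $1$), so that its multiplication operator $L_{a\mult b}$ is an order isomorphism by the argument of theorem~\ref{theor:homogen} and in particular is cancellable.

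For item~1 the key observation is that $\Theta_b$ is itself a unital order isomorphism of $V$, because $b$ is invertible and hence $V_b=V$. Invariance of $\mult$ then applies with $\Phi=\Theta_b$ and yields $\Theta_b L_a = L_{\Theta_b(a)}\Theta_b$, so it suffices to show $\Theta_b(a)=a$. But $a$ and $b$ commute and $a\leq 1=\ceil{b}$, so this is exactly proposition~\ref{prop:automorphismcommute}. Therefore $\Theta_b L_a = L_a\Theta_b$. This is the step I expect to be the main obstacle, in the sense that recognising \emph{invariance} as precisely what permits $\Theta_b$ to commute past $L_a$ is the whole content of the proposition; everything afterwards is bookkeeping.

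For item~2 I would first note that $a\mult^\prime b=a\mult b$ by proposition~\ref{prop:commutingeffects}, so the two leftmost quantities coincide and in particular $L_{a\mult^\prime b}=L_{a\mult b}$ and $\Theta_{a\mult^\prime b}=\Theta_{a\mult b}$. Since $a$ and $b$ also commute with respect to $\mult^\prime$ (again proposition~\ref{prop:commutingeffects}), I can expand $L_{a\mult^\prime b}' = L_a'L_b' = L_a\Theta_a L_b\Theta_b$ and push $\Theta_a$ past $L_b$ using item~1, obtaining $L_aL_b\Theta_a\Theta_b = L_{a\mult b}\Theta_a\Theta_b$. Comparing this with $L_{a\mult^\prime b}' = L_{a\mult^\prime b}\Theta_{a\mult^\prime b} = L_{a\mult b}\Theta_{a\mult b}$ and cancelling the invertible $L_{a\mult b}$ gives $\Theta_{a\mult b}=\Theta_a\Theta_b$. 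Running the identical computation with $a$ and $b$ interchanged and using $a\mult b=b\mult a$ yields $\Theta_{a\mult b}=\Theta_b\Theta_a$, which simultaneously establishes the commutativity $\Theta_a\Theta_b=\Theta_b\Theta_a$ and finishes item~2.

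Item~3 is then immediate. The effects $a$ and $a^{-1}$ are commuting simple invertible effects with $a\mult a^{-1}=\ceil{a}=1$, and $\Theta_1=\id$ because $L_1'=\id=L_1$ forces $\Theta_1=\id$. Applying item~2 to the pair $(a,a^{-1})$ gives $\Theta_a\Theta_{a^{-1}}=\Theta_{a\mult a^{-1}}=\Theta_1=\id$, whence $\Theta_{a^{-1}}=\Theta_a^{-1}$. In the normal case the argument is verbatim the same, invoking propositions~\ref{prop:commutingeffects} and~\ref{prop:automorphismcommute} in their normal forms so that all identities extend from $E_0$ to arbitrary commuting invertible $a,b$.
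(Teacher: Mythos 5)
Your proposal is correct and follows essentially the same route as the paper: item~1 via invariance applied to $\Phi=\Theta_b$ together with proposition~\ref{prop:automorphismcommute}, item~2 by expanding $L_{a\mult^\prime b}'=L_a'L_b'$ and cancelling the invertible $L_{a\mult b}$ after using proposition~\ref{prop:commutingeffects}, and item~3 from $a\mult a^{-1}=1$ and $\Theta_1=\id$. The only cosmetic difference is that you rerun the computation with $a$ and $b$ swapped to get commutativity, where the paper simply cites $\Theta_{a\mult b}=\Theta_{b\mult a}$; these are the same argument.
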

\begin{proof}
	By invariance of $\mult$ and proposition \ref{prop:automorphismcommute}: $\Theta_bL_a=L_{\Theta_b(a)}\Theta_b = L_a\Theta_b$. Of course $L_{a\mult^\prime b}^\prime = L_{a\mult^\prime b} \Theta_{a\mult^\prime b}$ and also $L_{a\mult^\prime b}^\prime = L_a^\prime L_b^\prime = L_a\Theta_a L_b\Theta_b = L_aL_b \Theta_a\Theta_b = L_{a\mult b}\Theta_a\Theta_b$. Because $a\mult^\prime b= a\mult b$ by proposition \ref{prop:commutingeffects} and since $a$ and $b$ are both invertible we conclude that $L_{a\mult^\prime b}=L_{a\mult b}$ is an invertible operator so that we must have $\Theta_{a\mult b} = \Theta_a\Theta_b$. Since $\Theta_{a\mult b}=\Theta_{b\mult a}$ we also see that $\Theta_a\Theta_b =\Theta_b\Theta_a$. Finally because $a$ and $a^{-1}$ commute and $\Theta_1=\id$ we can immediately conclude the third point.
\end{proof}

\subsection{Invariance characterises the sequential product}
In this section we will use the invariance property $\Theta(a\mult b)=\Theta(a)\mult \Theta(b)$ to characterise the standard sequential product on $B(H)$ where $H$ is a (not necessarily finite-dimensional) complex Hilbert space. We will then give a version of this theorem that holds for arbitrary finite-dimensional von Neumann algebras and finally we generalise the theorem to hold for (also not necessarily finite-dimensional) Euclidean Jordan algebras. We note that the standard sequential product on these spaces is invariant so that proposition \ref{prop:isomorphismgroup} holds.

\begin{lemma}\label{lem:commutingunitary}
	Let $\mult^\prime$ be a sequential product on $B(H)$ and let $a\in B(H)$ be a simple invertible positive element, then there exists a unitary $u_a$ in $C(a)$, the double commutant of $a$, such that $a\mult^\prime b = \sqrt{a}u_a^*bu_a\sqrt{a}$ for all $b$.
\end{lemma}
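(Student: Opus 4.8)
The plan is to identify the twisting isomorphism $\Theta_a$ attached to $a$ and to show it is implemented by conjugation by a unitary lying inside the abelian algebra $C(a)$. Since the standard product $\mult$ on $B(H)$ is invariant and $a$ is simple and invertible (so $\ceil{a}=1$ and the order ideal $V_a$ is all of $B(H)^{\sa}$), the discussion following Proposition \ref{prop:relatedsequentialproducts} yields a unital order isomorphism $\Theta_a:B(H)^{\sa}\to B(H)^{\sa}$ with $L_a^\prime = L_a\Theta_a$, that is, $a\mult^\prime b = \sqrt{a}\,\Theta_a(b)\,\sqrt{a}$ once $\Theta_a$ is extended complex-linearly. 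Everything reduces to proving that $\Theta_a(b)=u_a^* b u_a$ for some unitary $u_a\in C(a)$.

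First I would pin down what $\Theta_a$ fixes. As recorded before the definition of an invariant product, a unital order isomorphism of $B(H)$ is a Jordan automorphism. By Proposition \ref{prop:automorphismcommute}, $\Theta_a(b)=b$ for every simple effect $b$ commuting with $a$; since for the standard product commutation with $a$ coincides with operator commutation, since simple effects are norm-dense (Theorem \ref{theor:spectraltheorem}), and since $\Theta_a$ is norm-continuous, $\Theta_a$ restricts to the identity on the whole commutant $\{a\}^\prime$. Writing $a=\sum_{i=1}^n\lambda_i p_i$ with the $\lambda_i>0$ distinct and $\sum_i p_i=1$, we have $\{a\}^\prime=\bigoplus_i p_iB(H)p_i$, while $\{a\}^{\prime\prime}=C(a)$ is the abelian algebra spanned by the $p_i$.

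Next I would exploit the rigidity of Jordan automorphisms on the factor $B(H)$: such a map is globally either a $*$-automorphism or a $*$-anti-automorphism (Kadison). In the automorphism case $\Theta_a=\mathrm{Ad}_v$ for a unitary $v$, and fixing $\{a\}^\prime$ pointwise forces $v\in\{a\}^{\prime\prime}=C(a)$; setting $u_a=v^*$ then finishes the proof with $u_a\in C(a)$ as required. To exclude the anti-automorphism alternative I would observe that it is incompatible with fixing $\{a\}^\prime$ as soon as some spectral projection $p_i$ has rank at least two: the noncommutative corner $p_iB(H)p_i\subseteq\{a\}^\prime$ is then fixed pointwise, yet an anti-automorphism sends $xy\mapsto\Theta_a(y)\Theta_a(x)=yx$, forcing that corner to be commutative, a contradiction. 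In particular this case never arises when $\dim H=\infty$, since then one of the finitely many $p_i$ is necessarily of infinite rank.

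The main obstacle is therefore the residual finite-dimensional situation in which $a$ has nondegenerate spectrum, so that $\{a\}^\prime=\{a\}^{\prime\prime}$ is a maximal abelian subalgebra and the anti-automorphism reduces to transposition in the eigenbasis of $a$. This possibility is invisible to the single-element data: transposition fixes every diagonal matrix and is even consistent with the relation $\Theta_{a\mult a}=\Theta_a\Theta_a$, so it must be ruled out through the global consistency of $\mult^\prime$. Here I would invoke Proposition \ref{prop:isomorphismgroup}, which makes $b\mapsto\Theta_b$ multiplicative on commuting invertible simple effects and intertwines the multiplication operators; I expect that testing this against mutually noncommuting effects, equivalently demanding that the transpose-twisted map $b\mapsto\sqrt{a}\,b^{\mathsf T}\sqrt{a}$ extend to a genuine sequential product satisfying the SEA axioms against noncommuting $b,c$, collapses the reflection part and forces $\Theta_a$ to be inner. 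This is the delicate step; once it is settled, the conclusion $u_a\in C(a)$ follows exactly as in the automorphism case above.
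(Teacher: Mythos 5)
Your reduction to showing that $\Theta_a$ is inner, and the final step deducing $u_a\in C(a)$ from the fact that $\Theta_a$ fixes everything commuting with $a$, match the paper. The genuine gap is in excluding the $*$-anti-automorphism alternative. Your rank argument does dispose of the case where some spectral projection of $a$ has rank at least two, but in the remaining case (finite-dimensional $H$, nondegenerate spectrum, so $\{a\}^\prime$ is maximal abelian and transposition in the eigenbasis of $a$ fixes it pointwise) you explicitly leave the step unfinished: you only ``expect'' that global consistency of $\mult^\prime$ collapses the reflection part. As written this is not a proof --- the unresolved case is exactly the one a putative counterexample would inhabit, and no concrete contradiction is derived there.

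The paper closes this in one line with a tool you already cite. Proposition \ref{prop:isomorphismgroup}(2), applied to the commuting invertible pair $\sqrt{a},\sqrt{a}$, gives $\Theta_a=\Theta_{\sqrt{a}\mult\sqrt{a}}=\Theta_{\sqrt{a}}^2$. Whatever $\Theta_{\sqrt{a}}$ is --- $*$-automorphism or $*$-anti-automorphism --- its square is a $*$-automorphism, hence inner on $B(H)$. This kills the anti-automorphism case uniformly, with no case split on the spectrum of $a$, and it also shows that your remark that transposition ``is even consistent with the relation $\Theta_{a\mult a}=\Theta_a\Theta_a$'' is looking at the wrong instance of that identity: the instance with both arguments equal to $\sqrt{a}$ is precisely what transposition fails, since the transpose is not the square of any Jordan isomorphism of $B(H)^{\sa}$. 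With that substitution your argument becomes correct and essentially coincides with the paper's proof.
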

\begin{proof}
	Because $a$ is invertible, it is order preserving so that $a\mult^\prime b = a\mult \Theta_a(b)$ for some unital order isomorphism $\Theta_a$ by proposition \ref{prop:quotient}. On $B(H)^{sa}$ the unital order isomorphisms are precisely the Jordan isomorphisms. Such an isomorphism is either a $*$-automorphism or a $*$-antiautomorphism. Because $\Theta_a=\Theta_{\sqrt{a}^2} = \Theta_{\sqrt{a}}^2$ by proposition \ref{prop:isomorphismgroup} it can not be an antiautomorphism. On $B(H)$, all automorphisms are \emph{inner}, so that $\Theta_a(b) = u_a^* b u_a$ for some unitary $u_a$. By proposition \ref{prop:automorphismcommute} we see that whenever $b$ commutes with $a$ that $\Theta_a(b)=b$. It follows that then $u_ab=bu_a$ for all $b$ commuting with $a$ (in a von Neumann algebra $a\mult b=b\mult a$ if and only if $ab=ba$ \cite{gudder2002sequentially}). We conclude that $u_a$ is indeed in the bicommutant of $a$.
\end{proof}

\begin{theorem}\label{theor:uniqueinvariantproduct}
	If $\mult^\prime$ is an invariant sequential product on $B(H)$ that is norm-continuous in its first coordinate, then $a\mult^\prime b = \sqrt{a}b\sqrt{a}$ for all effects $a$ and $b$.
\end{theorem}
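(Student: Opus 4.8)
The plan is to show that the unitary $u_a$ produced by Lemma \ref{lem:commutingunitary} is always a scalar, so that $\Theta_a = \id$ and $\mult^\prime$ collapses onto the standard product. Recall that for a simple invertible positive $a$ the lemma gives $a\mult^\prime b = \sqrt{a}\,u_a^* b\, u_a\,\sqrt{a}$ with $u_a$ in the bicommutant $C(a) = \{a\}^{\prime\prime}$. For $a = \sum_i \lambda_i p_i$ with distinct $\lambda_i$ this bicommutant is the \emph{abelian} algebra of functions of $a$, so automatically $u_a = \sum_i c_i p_i$ with each $|c_i| = 1$; no multiplicity issues arise, since $\{a\}^{\prime\prime}$ is abelian irrespective of the ranks of the $p_i$. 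Writing $\Theta_a(b) = u_a^* b\, u_a$, it suffices to prove $u_a \in \C\cdot 1$, and then to extend the resulting identity to all effects by density and continuity.

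First I would exploit invariance of $\mult^\prime$ under an \emph{antiautomorphism}. Fixing an orthonormal basis subordinate to the decomposition $H = \bigoplus_i p_i H$ and letting $J$ be complex conjugation in that basis, the transpose $t_J(x) := J x^* J$ is a unital order isomorphism (a $*$-antiautomorphism) that fixes every element of the abelian algebra $\{a\}^{\prime\prime}$; in particular $t_J(a) = a$, $t_J(\sqrt{a}) = \sqrt{a}$ and $t_J(u_a) = u_a$. Applying invariance, $t_J(a\mult^\prime b) = a\mult^\prime t_J(b)$, expanding the left-hand side with the antimultiplicativity of $t_J$ (which reverses the order of the factors $\sqrt{a}, u_a^*, b, u_a, \sqrt{a}$), and cancelling the invertible $\sqrt{a}$ on both sides, yields $u_a\, c\, u_a^* = u_a^*\, c\, u_a$ for all self-adjoint $c$. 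Hence $u_a^2$ is central, so $u_a^2 = \nu 1$ and $u_a = \sqrt{\nu}\, s$ with $s = \sum_i \epsilon_i p_i$, $\epsilon_i \in \{\pm 1\}$, a self-adjoint unitary; thus $\Theta_a(b) = s\, b\, s$.

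It remains to kill the residual sign operator $s$, and here the hypothesis of norm-continuity in the first argument enters. For large $n$ the root $a^{1/n}$ is again simple and invertible and tends to $1$; continuity gives $a^{1/n}\mult^\prime b \to 1\mult^\prime b = b$, while $\sqrt{a^{1/n}} \to 1$, so the associated symmetries $s_n$ satisfy $s_n b s_n \to b$ for all $b$. As each $s_n$ is a $\{\pm 1\}$-combination of the spectral projections, $\mathrm{Ad}(s_n) \to \id$ forces $s_n = \pm 1$ eventually, i.e.\ $\Theta_{a^{1/n}} = \id$. Since the $a^{1/n}$ are commuting invertible simple effects and $a$ is the $n$-fold standard sequential product of $a^{1/n}$ with itself, proposition \ref{prop:isomorphismgroup} gives $\Theta_a = \Theta_{a^{1/n}}^{\,n} = \id$, so $a\mult^\prime b = a\mult \Theta_a(b) = \sqrt{a}\, b\,\sqrt{a}$ for all simple invertible $a$. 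Finally the simple invertible effects are norm-dense (theorem \ref{theor:spectraltheorem}), so for an arbitrary effect $a$ one picks $a_n \to a$ simple invertible and uses continuity of $\mult^\prime$ in its first argument together with norm-continuity of $b \mapsto \sqrt{a_n}\, b\, \sqrt{a_n}$ to conclude $a\mult^\prime b = \sqrt{a}\, b\,\sqrt{a}$ in general.

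The step I expect to be the main obstacle is the second one: setting up a genuine antiautomorphism that simultaneously fixes $a$ and $u_a$ and is a legitimate unital order isomorphism to which the invariance hypothesis applies. This is transparent in finite dimensions but requires some care for infinite-dimensional $H$, namely choosing the conjugation $J$ adapted to the (possibly infinite-rank) spectral projections of $a$ and checking that $t_J$ falls within the class of order isomorphisms on which invariance is postulated. The continuity and density bookkeeping of the last step is routine by comparison, the only subtlety being to ensure the approximating effects can be taken simultaneously simple and invertible.
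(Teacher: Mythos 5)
Your argument is correct and follows essentially the same route as the paper: invariance under a transpose adapted to the spectral projections of $a$ forces $u_a^2$ to be central, hence $\Theta_a^2=\id$, and the multiplicativity $\Theta_{a\mult b}=\Theta_a\Theta_b$ of proposition \ref{prop:isomorphismgroup} then removes the residual sign. The only (minor) divergence is your last step: the paper simply observes $\Theta_a=\Theta_{\sqrt{a}}^{2}=\id$ because $\sqrt{a}$ is again simple, invertible and has the same spectral projections, which makes your limiting argument with $a^{1/n}$ and the extra appeal to norm-continuity unnecessary.
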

\begin{proof}
	Since we assume norm-continuity in the first coordinate, it suffices if we prove it for a norm-dense set, e.g. for all simple invertible positive $a$. We will first prove that if $a$ is diagonal in the standard basis that $a\mult^\prime b = \sqrt{a}b\sqrt{a}$. For an arbitrary $a\geq 0$  we can then use its spectral decomposition to write $a=u^*du$ where $d$ is diagonal. Letting $\Phi$ denote conjugation with $u$ we see that $\Phi(a\mult^\prime b) = d\mult^\prime \Phi(b) = d\mult \Phi(b) = \Phi(a\mult b)$ using invariance so that indeed $a\mult^\prime b = a\mult b$.

	So let $a$ be a simple invertible diagonal positive effect. Let $u_a$ denote the unitary from the previous lemma. Since $u_a$ is in the bicommutant of $a$, it will also be diagonal. Let $b^T$ denote the transpose of an element $b$, then because $a$ and $u_a$ are diagonal we get $a^T=a$, $u_a^T=u_a$ and $(u_a^*)^T=u_a^*$. The transpose map is a unital order isomorphism so we must have $(a\mult^\prime b)^T = a^T\mult^\prime b^T = a\mult^\prime b^T = \sqrt{a}u_a^*b^Tu_a\sqrt{a}$. But also $(a\mult^\prime b)^T = (\sqrt{a}u_a^*bu_a\sqrt{a})^T = \sqrt{a}^Tu_a^T b^T (u_a^*)^T\sqrt{a}^T = \sqrt{a}u_ab^T u_a^*\sqrt{a}$. The square root of $a$ is invertible since $a$ is allowing us to conclude that $u_a^* b^T u_a = u_a b^T u_a^*$ for all $b$. This is only possible when $u_a^2$ is in the centre of the algebra. But then of course the conjugation with this element is trivial so that $\Theta_a^2 = \id$. Replacing $a$ by $\sqrt{a}$ and noting that $\Theta_{\sqrt{a}}^2 = \Theta_{\sqrt{a}^2} =\Theta_a$ by proposition \ref{prop:isomorphismgroup} we see that $\Theta_a=\id$ whenever $a$ is simple, invertible and diagonal.
\end{proof}

As we will show below, this theorem also holds when considering direct sums of $B(H)$'s. It is however not clear if it holds for arbitrary von Neumann algebras. The proof relies on the existence of a suitable order automorphism (namely the transpose), and in general this automorphism doesn't exist.

\begin{corollary}
	Any invariant sequential product on a finite-dimensional von Neumann algebra that is norm-continuous in its first argument must be the standard sequential product.
\end{corollary}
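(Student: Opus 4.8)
The plan is to adapt the proof of Theorem~\ref{theor:uniqueinvariantproduct} to a finite-dimensional von Neumann algebra $V$, which is a direct sum of matrix blocks $V=\bigoplus_k B(H_k)$ with each $H_k$ finite-dimensional. As before, norm-continuity in the first argument reduces the problem to a norm-dense set, so it suffices to treat simple invertible positive $a$. For such $a$, Proposition~\ref{prop:quotient} and Proposition~\ref{prop:relatedsequentialproducts} (applied with $\mult$ the standard product, which is invariant) give $a\mult^\prime b = a\mult\Theta_a(b) = \sqrt{a}\,\Theta_a(b)\,\sqrt{a}$ for a unital order isomorphism $\Theta_a$, i.e.\ a Jordan automorphism of $V$. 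The two ingredients of the single-block proof — that $\Theta_a$ is implemented by a unitary lying in the bicommutant (Lemma~\ref{lem:commutingunitary}) and the transpose trick — both need to be re-examined, since Jordan automorphisms of $V$ may now permute isomorphic blocks.

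The key new step is to rule out such permutations. By Proposition~\ref{prop:automorphismcommute}, $\Theta_a$ fixes every effect commuting with $a$; in particular it fixes each central projection $z_k$ (the unit of $B(H_k)$), as these commute with everything. Hence $\Theta_a$ maps each block $B(H_k)$ into itself and restricts there to a Jordan automorphism, which is either a $*$-automorphism or a $*$-antiautomorphism. Because $\Theta_a=\Theta_{\sqrt a}^2$ is a square (Proposition~\ref{prop:isomorphismgroup}), its restriction to each block is orientation-preserving and so must be a $*$-automorphism; as every $*$-automorphism of a matrix algebra is inner, I obtain $\Theta_a(\,\cdot\,)=u_a^*(\,\cdot\,)u_a$ for a block-diagonal unitary $u_a$. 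Repeating the argument of Lemma~\ref{lem:commutingunitary} (using that $a\mult b=b\mult a\iff ab=ba$ in a von Neumann algebra) then shows $u_a$ lies in the bicommutant $C(a)$. This is the block-diagonal analogue of Lemma~\ref{lem:commutingunitary}.

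With this in hand the remainder of Theorem~\ref{theor:uniqueinvariantproduct} carries over at the level of blocks. First, diagonalising $a$ block-by-block, I would write $a=u^*du$ with $u$ a block-diagonal unitary and $d$ diagonal in each block; conjugation by $u$ is a unital order isomorphism of $V$, so by invariance it suffices to treat the diagonal case. For diagonal $a$ the unitary $u_a\in C(a)$ is itself diagonal, and applying the block-wise transpose — which is again a unital order isomorphism of $V$, being a blockwise $*$-antiautomorphism — forces $u_a^2$ to be central, so that $\Theta_a^2=\mathrm{Ad}(u_a^2)=\id$. Replacing $a$ by $\sqrt a$ and using $\Theta_{\sqrt a}^2=\Theta_a$ then yields $\Theta_a=\id$, i.e.\ $a\mult^\prime b=\sqrt a\,b\,\sqrt a$ for all diagonal $a$, hence for all simple invertible positive $a$, and finally, by continuity, for all effects.

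I expect the main obstacle to be exactly the presence of block-permuting automorphisms, which break the single-block reasoning of Lemma~\ref{lem:commutingunitary}; the resolution is the observation that $\Theta_a$ fixes the centre and hence each $z_k$, reducing everything to an honestly block-diagonal situation in which the transpose trick still applies. A secondary point to confirm is that the block-wise transpose is a genuine order isomorphism of the whole algebra and that the diagonalisation can be performed simultaneously in all blocks by a single block-diagonal unitary; both are routine once the block structure is fixed.
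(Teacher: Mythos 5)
Your proof is correct, and it follows the paper's overall architecture (reduce to simple invertible positive $a$ by norm-continuity, show $\Theta_a$ respects the block decomposition, then run the single-factor argument of Theorem~\ref{theor:uniqueinvariantproduct} block by block), but it handles the one genuinely new difficulty --- ruling out block-permuting Jordan automorphisms --- by a different device. You observe that the central projections $z_k$ are sharp, hence simple, and commute with $a$, so Proposition~\ref{prop:automorphismcommute} forces $\Theta_a(z_k)=z_k$ and $\Theta_a$ is honestly block-diagonal. The paper instead writes $\Theta_a=\Theta_{a^{1/(k!)}}^{k!}$ via Proposition~\ref{prop:isomorphismgroup} (where $k$ is the number of factors) and notes that the permutation part of a $k!$-th power of any automorphism of a $k$-factor algebra is trivial, then invokes Theorem~\ref{theor:uniqueinvariantproduct} on each factor. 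Your route is arguably more direct: it needs no classification of $*$-automorphisms of a direct sum as ``permutation followed by conjugations'' and no counting of factors, and it would survive in situations (e.g.\ infinitely many summands) where the $k!$ trick is unavailable; what the paper's argument buys is that it reuses only the already-established algebraic identity $\Theta_{a\mult b}=\Theta_a\Theta_b$ rather than re-examining which effects $\Theta_a$ fixes. Your subsequent steps --- squaring $\Theta_{\sqrt a}$ to exclude antiautomorphisms on each block, placing $u_a$ in the bicommutant as in Lemma~\ref{lem:commutingunitary}, and the blockwise transpose --- reproduce the single-factor proof and are sound.
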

\begin{proof}
	As in the previous theorem it suffices to consider simple invertible positive effects $a$. Let $\Theta_a$ denote the unital order isomorphism related to the sequential product. This is still a $*$-automorphism by the same argument as in the above theorem. A finite-dimensional von Neumann algebra is a direct sum of $B(H)$'s and any $*$-automorphism then consists of a composition of a map interchanging isomorphic factors followed by some unitary conjugations. Let $k$ denote the amount of factors in the algebra. Since $\Theta_a = \Theta_{a^{1/(k!)}}^{k!}$ we see that $\Theta_a$ cannot contain any interchanges of factors and must be a direct sum of unitary conjugations of each of the factors. Then restricting to each factor separately and using the above theorem we see that indeed the sequential product must be the standard one.
\end{proof}
\begin{theorem}\label{theor:uniqueinvariantproductEJA}
	If $\mult^\prime$ is an invariant sequential product on a Euclidean Jordan algebra that is norm-continuous in its first argument, then $a\mult^\prime b = Q_{\sqrt{a}} b$ for all effects $a$ and $b$.
\end{theorem}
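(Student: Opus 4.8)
The plan is to follow the template of Theorem \ref{theor:uniqueinvariantproduct} and its corollary: reduce to a norm-dense family of elements, encode the discrepancy between $\mult^\prime$ and the standard product $\mult = Q_{\sqrt{\cdot}}(\cdot)$ by a Jordan automorphism $\Theta_a$, and then show that automorphism is trivial. Since $\mult^\prime$ is norm-continuous in its first argument, it suffices to prove $a\mult^\prime b = a\mult b$ for all $a$ in a norm-dense set, and by Theorem \ref{theor:spectraltheorem} I may take $a$ to range over the simple invertible positive effects. For such an $a$ both products make it order preserving, so Proposition \ref{prop:relatedsequentialproducts} furnishes a unital order isomorphism $\Theta_a$ with $L_a^\prime = L_a\Theta_a$. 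On an EJA the unital order isomorphisms are precisely the Jordan automorphisms, so $\Theta_a\in \mathrm{Aut}(V)$, and the theorem reduces to showing $\Theta_a = \id$ for every simple invertible positive $a$.

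Next I would collect three structural constraints on $\Theta_a$. First, by Proposition \ref{prop:automorphismcommute}, $\Theta_a$ fixes pointwise every simple effect commuting with $a$, hence fixes the whole commutant $\{a\}^\prime$. Secondly, since both $\mult$ and $\mult^\prime$ are invariant, for any unital order isomorphism $\Phi$ one extracts from $\Phi L_a^\prime = L^\prime_{\Phi(a)}\Phi$ and $\Phi L_a = L_{\Phi(a)}\Phi$ the covariance relation $\Theta_{\Phi(a)} = \Phi\,\Theta_a\,\Phi^{-1}$; in particular $\Theta_a$ commutes with every Jordan automorphism that fixes $a$. Thirdly, writing $a = \sum_i \lambda_i p_i$ and noting that $a^{1/n} = \sum_i \lambda_i^{1/n} p_i$ has the same spectral projections (hence the same commutant and the same stabiliser), Proposition \ref{prop:isomorphismgroup} gives $\Theta_a = \Theta_{a^{1/n}}^n$ with each $\Theta_{a^{1/n}}$ subject to the same two constraints. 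Thus $\Theta_a$ is infinitely divisible inside the group of Jordan automorphisms that fix $\{a\}^\prime$ pointwise.

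To finish I would show these constraints confine $\Theta_a$ to a finite set, whereupon infinite divisibility forces $\Theta_a = \id$. The central idempotents of $V$ lie in $\{a\}^\prime$ and are therefore fixed, so $\Theta_a$ respects the decomposition of $V$ into simple ideals and I may reduce to the case $V$ simple. Fixing $\{a\}^\prime$ pointwise is extremely rigid and confines $\Theta_a$ to the small compact group $G_a$ of automorphisms acting only on the off-diagonal Peirce spaces cut out by the frame $\{p_i\}$. In each simple EJA I would exhibit an involutive Jordan automorphism $T$ fixing $a$ whose conjugation action inverts $G_a$ — the transpose for the matrix algebras $H_n(\mathbb{K})$, a reflection for a spin factor. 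The covariance relation then gives $T\Theta_a T^{-1} = \Theta_{T(a)} = \Theta_a$, while by the choice of $T$ the same expression equals $\Theta_a^{-1}$; hence $\Theta_a^2 = \id$. Applying this to $a^{1/2^k}$ and using $\Theta_a = \Theta_{a^{1/2^k}}^{2^k}$ then upgrades $\Theta_a^2 = \id$ to $\Theta_a = \id$, exactly as at the close of Theorem \ref{theor:uniqueinvariantproduct}.

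The main obstacle is precisely this last step. Unlike the $B(H)$ case, where the transpose is immediately available and directly gives that $u_a^2$ is central, a general (possibly infinite-dimensional) EJA requires identifying the frame-fixing group $G_a$ and a suitable inverting involution $T$ for every class in the classification of simple EJAs. The delicate cases are the quaternionic matrices $H_n(\mathbb{H})$, the spin factors (where $G_a$ is an orthogonal group rather than a torus), and above all the exceptional Albert algebra $H_3(\mathbb{O})$, for which triality makes the frame-fixing group related to $\mathrm{Spin}(8)$; in each one must verify that the inverting involution exists and that the resulting $2$-torsion constraint, combined with divisibility, kills $\Theta_a$.
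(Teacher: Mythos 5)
Your setup matches the paper's: reduce by norm-continuity to simple invertible positive $a$, encode the discrepancy as a Jordan automorphism $\Theta_a$ with $L_a^\prime = L_a\Theta_a$, split off the simple ideals (your observation that $\Theta_a$ fixes the central idempotents is a clean way to do this), and then run a case analysis over the Jordan--von Neumann--Wigner classification. Your three structural constraints --- $\Theta_a$ fixes $\{a\}^\prime$ pointwise, the covariance $\Theta_{\Phi(a)}=\Phi\Theta_a\Phi^{-1}$, and divisibility $\Theta_a=\Theta_{a^{1/n}}^n$ --- are exactly the right ingredients and are all justified by propositions \ref{prop:automorphismcommute} and \ref{prop:isomorphismgroup}.

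The gap is in the step you yourself flag as the main obstacle, and it is not merely a matter of unchecked cases: the mechanism you propose there cannot work. A compact group $G_a$ admitting an automorphism (in particular a conjugation $g\mapsto TgT^{-1}$) that inverts every element is necessarily abelian, since $h^{-1}g^{-1}=T(gh)T^{-1}=g^{-1}h^{-1}$. But the frame-fixing group is nonabelian precisely in the cases you list as delicate: for $H_n(\mathbb{H})$ it is $Sp(1)^n$ modulo its kernel, for a spin factor it is $O(\{v\}^\perp)$, and for the Albert algebra it is related to $\mathrm{Spin}(8)$. So no inverting involution $T$ exists there, and ``a reflection for a spin factor'' in particular does not invert $O(\{v\}^\perp)$. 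The resolution --- and what the paper actually does for spin factors --- is to use your constraint (2) directly in those cases: $\Theta_a$ commutes with every automorphism fixing $a$, hence is \emph{central} in $G_a$, and the centres of these nonabelian stabilisers are finite $2$-torsion groups ($\{\pm I\}$ for the spin factor, etc.); divisibility $\Theta_a=\Theta_{\sqrt a}^2$ with $\Theta_{\sqrt a}$ also central then forces $\Theta_a=\id$. The inverting involution (the transpose) is genuinely needed only in the complex case $H_n(\C)$, where the stabiliser is a torus and centrality gives no information. Your proof becomes correct once you swap in the centrality argument for the nonabelian stabilisers rather than hunting for an involution that provably is not there.
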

\begin{proof}
	Due to the same argument as in the previous corollary we only need to consider simple EJA's. By referring to the Jordan-von Neumann-Wigner classification of EJA's \cite{jordan1993algebraic} these are either $B(H)^\sa$ where $H$ is a real, complex, quaternionic or octonion Hilbert space, or a type of algebra called a \emph{spin-factor}. It is straightforward to check that the proof of theorem \ref{theor:uniqueinvariantproduct} goes through for a $B(H)$ where the $H$ is a real, complex, quaternion or octonion Hilbert space. The only case left to check then are the spin factors.

	A spin-factor is a space $V=H\oplus \R$ where $H$ is a real Hilbert space. Any unital order isomorphism on this space is implemented by an orthogonal operator on $H$. We therefore have $(v,t)\mult^\prime (w,s) = (v,t)\mult (U_{v,t}w, s)$ where $U_{v,t}$ is an orthogonal operator such that $U_{v,t}v = v$ when $(v,t)$ is invertible due to proposition \ref{prop:automorphismcommute}. It is a straightforward verification that the invariance property of the sequential product by order isomorphisms means that for any orthogonal matrix $W$ we must have $WU_{v,t} = U_{Wv,t}W$, so that for any $W$ with $Wv=v$ we get $WU_{v,t} = U_{v,t}W$ implying that $U_{v,t}$ is central in the subspace of $H$ given by $\{v\}^\perp$. The only central elements are the identity or minus the identity and since the operator must be a square it must be the identity so that the proof is completed.
\end{proof}


\subsection{Symmetry characterises the sequential product}
In this section we will consider sequential effect spaces that are also Hilbert spaces as in definition \ref{def:HilbertSES}. As was shown in theorem \ref{theor:hilbertisEJA} such sequential effect spaces are Euclidean Jordan algebras. In this section we will show that the property of $\inn{a\mult b, c} = \inn{b,a\mult c}$ actually fixes the sequential product.

\begin{lemma}\label{lem:orthfinite}
	Let $E$ be a Hilbert SES, then any set of non-zero orthogonal sharp effects is finite.
\end{lemma}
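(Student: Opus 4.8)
The plan is to argue by contradiction: assuming an infinite family of orthogonal nonzero sharp effects exists, I would extract a countably infinite subfamily $p_1,p_2,\ldots$ and build from it a single element of $V$ that cannot be bounded by any multiple of the order unit $1$, contradicting the fact that $1$ is a strong order unit. The whole argument hinges on translating the combinatorial data (infinitely many orthogonal sharp effects) into a summability statement in the inner product, and then amplifying that summable sequence into an unbounded element.

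First I would record how orthogonality and sharpness read off in the inner product. We have already seen that orthogonal effects satisfy $\inn{p,q}=0$ and that the inner product of two positive elements is nonnegative. Using symmetry of the product together with $p\mult 1=p$ and $p\mult p=p$ for sharp $p$ gives $\inn{p,p}=\inn{1,p\mult p}=\inn{1,p}$. Setting $t_n:=\inn{p_n,p_n}$, positive-definiteness makes every $t_n>0$, while by proposition \ref{prop:sharpproperties} the partial sums $s_N:=\sum_{n\le N}p_n$ are sharp effects below $1$, so $\sum_n t_n=\lim_N\inn{1,s_N}\le\inn{1,1}<\infty$.

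The core of the proof is to exploit the convergence of $\sum_n t_n$. Because this series converges I can choose coefficients $c_n\to\infty$ with $\sum_n c_n^2 t_n<\infty$; concretely $c_n=R_n^{-1/4}$ with tails $R_n=\sum_{m\ge n}t_m$ works, since $\sum_n c_n^2 t_n=\sum_n R_n^{-1/2}t_n\le 2\sqrt{R_1}<\infty$ while $c_n\to\infty$. Put $b_N=\sum_{n\le N}c_n p_n$. Orthogonality of the $p_n$ in the inner product makes $\norm{b_M-b_N}^2=\sum_{N<n\le M}c_n^2 t_n$, so $(b_N)$ is Cauchy in the Hilbert norm and converges to some $b\in V$ by completeness. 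Since the cone is self-dual it is closed in the Hilbert norm, so passing to the limit in $b_M-c_m p_m\ge 0$ (valid for $M\ge m$) yields $b\ge c_m p_m$ for every $m$.

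Finally I would close the contradiction. As $1$ is a strong order unit there is a scalar $\lambda$ with $b\le\lambda 1$, hence $c_m p_m\le\lambda 1$ for all $m$. Pairing $\lambda 1-c_m p_m\ge 0$ with $p_m\ge 0$ and using that positives have nonnegative inner product gives $(\lambda-c_m)t_m\ge 0$, and $t_m>0$ forces $c_m\le\lambda$ for all $m$, which is impossible since $c_m\to\infty$. The step I expect to be most delicate is not any single inequality but the interplay of the two norms: the element $b$ is manufactured as a limit in the Hilbert norm, yet the contradiction lives entirely in the order-unit structure. The construction only works because self-duality of the cone links the two, guaranteeing simultaneously that $b$ lands in $V$ and that it inherits the order bounds $b\ge c_m p_m$; arranging the tail coefficients $c_n$ to diverge while keeping $\sum_n c_n^2 t_n$ finite is the single quantitative point, but it is a routine consequence of $\sum_n t_n<\infty$.
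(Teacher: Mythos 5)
Your proposal is correct and follows essentially the same route as the paper's proof: both turn the infinite orthogonal family into a summable sequence $\inn{p_n,p_n}$, amplify it by divergent coefficients chosen to keep the series $\sum_n c_n^2\inn{p_n,p_n}$ finite, and let the Hilbert-norm limit of the partial sums contradict the strong order unit. The only (cosmetic) differences are that the paper passes to a subsequence with $\inn{p_{n_i},p_{n_i}}\leq 2^{-i}/i^2$ and uses coefficients $l$, and reads off the contradiction from the order-unit norm $\norm{q}\geq k$, whereas you use tail-based coefficients and pair $\lambda 1 - c_m p_m$ against $p_m$; your explicit appeal to closedness of the self-dual cone to get $b\geq c_m p_m$ is in fact slightly more careful than the paper's "increasing, hence the limit is the supremum".
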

\begin{proof}
	Let $\norm{a}_2 = \sqrt{\inn{a,a}}$ denote the norm of the inner product in which the space is complete by assumption and towards contradiction suppose we have an infinite set of non-zero orthogonal sharp effects $p_i$. Note that $\inn{p_i,p_j}=\inn{p_i\mult p_i,p_j}=\inn{p_i,p_i\mult p_j}=\inn{p_i,0}=0$ when $i\neq j$ so that they are also orthogonal with respect to the inner product.

	Since $\norm{\sum_i p_i}_2\leq \norm{1}_2<\infty$ we have $\sum_i \inn{p_i,p_i}< \infty$, so that $(\inn{p_i,p_i})_i$ is a sequence converging to zero. There must then be a subsequence $p_{n_i}$ such that $\inn{p_{n_i},p_{n_i}}\leq 2^{-i}/i^2$. Now let $q_k = \sum_{l=1}^k l p_{n_l}$ as an element of the order unit space associated to $E$, then $\norm{q_k - q_m}_2 = \sqrt{\sum_{l=m}^k l^2 \inn{p_{n_l},p_{n_l}}} \leq \sqrt{\sum_{l=m}^k 2^{-l}}$ so that $(q_k)$ is a Cauchy sequence. Because the space is complete in $\norm{\cdot}_2$ it must then have a limit $q$ and since $(q_k)$ is an increasing sequence this will also be the supremum so that $q\geq q_k$ for all $k$. In the regular norm we get $\norm{q}\geq \norm{q_k}=k$ for all $k$ which is not possible. We conclude that any set of orthogonal idempotents must indeed be finite.
\end{proof}
\begin{corollary}
	Let $E$ be a Hilbert SES, and let $V$ be its associated order unit space. Let $a\in V$. Then there exists a finite set of orthogonal sharp effects $p_i\in E$ and $\lambda_i\in \R$ such that $a=\sum_i \lambda_i p_i$.
\end{corollary}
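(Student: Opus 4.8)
The plan is to reduce the statement to a finite-dimensional computation inside the commutative subalgebra generated by $a$, where the spectral theorem together with Lemma \ref{lem:orthfinite} does all the work. First I would reduce to the case that $a$ is an effect: if $a=0$ there is nothing to prove, and otherwise, writing $c=\norm{a}>0$, the element $b:=\frac{1}{2c}(a+c1)$ satisfies $0\le b\le 1$ and hence lies in $E$, with $a=2cb-c1$. Since $1$ is the sum of any partition of the unit into orthogonal sharp effects, it suffices to produce such a finite decomposition for $b$ and then translate back.

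By Proposition \ref{prop:bicommutant} the subalgebra $C(b)=\{b\}''$ is a commutative SES, so by Theorem \ref{theor:commbasicallydisconnect} it is isomorphic to the unit interval of $C(X)$ for some basically disconnected compact Hausdorff space $X$; I will work inside this representation. By Proposition \ref{prop:sharpclopen} the sharp effects of $C(X)$ are exactly the characteristic functions $\chi_S$ of clopen sets $S\subseteq X$, and two of them are orthogonal precisely when the corresponding sets are disjoint.

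The crucial step is to show that $X$ has only finitely many clopen sets. Lemma \ref{lem:orthfinite} guarantees that $E$, and hence the sub-SES $C(b)$, admits no infinite family of pairwise orthogonal nonzero sharp effects; equivalently, the Boolean algebra $B$ of clopen subsets of $X$ has no infinite family of pairwise disjoint nonzero elements. A standard dichotomy for Boolean algebras then forces $B$ to be finite: an infinite Boolean algebra either has infinitely many atoms, which already constitute an infinite disjoint family, or contains a nonzero atomless part, which can be split repeatedly into disjoint nonzero pieces. I expect this Boolean-algebra fact to be the only non-routine ingredient, although it is entirely standard.

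Finally, a finite Boolean algebra $B$ is atomic with finitely many atoms $S_1,\dots,S_m$ partitioning $X$, and every clopen set is a union of atoms. Hence every sharp effect, and therefore every simple effect $\sum_i\lambda_i p_i$, lies in the finite-dimensional subspace $W=\text{span}_{\R}\{\chi_{S_1},\dots,\chi_{S_m}\}$. By the spectral theorem applied inside the commutative SES $C(b)$ (Proposition \ref{prop:commutativespectral}), $b$ is a norm limit of such simple effects, all of which lie in $W$; since $W$ is finite-dimensional it is norm-closed, so $b=\sum_j\mu_j\chi_{S_j}\in W$. Translating back via $1=\sum_j\chi_{S_j}$ gives $a=2cb-c1=\sum_j(2c\mu_j-c)\chi_{S_j}$, a finite real linear combination of the orthogonal sharp effects $\chi_{S_j}$, as required.
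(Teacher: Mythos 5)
Your proof is correct and follows essentially the same route as the paper: pass to the commutative bicommutant, use Lemma \ref{lem:orthfinite} to force finiteness, and conclude via the spectral decomposition. The paper's own proof is a one-liner asserting that an infinite-dimensional $C(a)$ would contain an infinite orthogonal family of sharp effects; your Boolean-algebra dichotomy on the clopens of $X$ (and the reduction of a general $a\in V$ to an effect) just supplies the details that assertion leaves implicit.
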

\begin{proof}
	For an $a\in V$ we can look at the bicommutant $C(a)$ as in the proof of theorem \ref{theor:spectraltheorem}. If $C(a)$ is infinite-dimensional then there is also an infinite set of orthogonal sharp effects contradicting the previous lemma. So $C(a)$ is finite-dimensional which means we can write $a=\sum_{i=1}^n \lambda_i p_i$ for some finite $n\in \N$.
\end{proof}

\begin{theorem}\label{theor:uniquesymmetricproduct}
	Let $E$ be a sequential effect space that is also a Hilbert space with respect to an inner product $\inn{\cdot,\cdot}$. If the sequential product is symmetric with respect to the inner product, then $E$ is the unit interval of a Euclidean Jordan algebra and the sequential product is identical to its standard sequential product.
\end{theorem}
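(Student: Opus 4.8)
The plan is to leverage the structural results already established. By Theorem \ref{theor:hilbertisEJA}, the fact that $E$ is a Hilbert SES immediately gives that $E$ is the unit interval of a Euclidean Jordan algebra $V$, so only the uniqueness of the sequential product remains to be shown. Since $V$ carries its own standard sequential product $a\mult^{\text{std}} b = Q_{\sqrt{a}}b$ which is symmetric with respect to $\inn{\cdot,\cdot}$, I would set up the comparison machinery of the previous section: writing the given symmetric product as $\mult^\prime$ and the standard product as $\mult$, I invoke Proposition \ref{prop:relatedsequentialproducts} to obtain, for each order-preserving (in particular invertible) effect $q$, a unital order isomorphism $\Theta_q$ on $V_q$ satisfying $q\mult^\prime a = q\mult(\Theta_q(a))$. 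The corollary following Lemma \ref{lem:orthfinite} guarantees that every element is a \emph{finite} sum $\sum_i\lambda_i p_i$ of orthogonal sharp effects, so the space behaves like a finite-dimensional one and the results proved only for simple effects apply everywhere; in particular Proposition \ref{prop:isomorphismgroup} is available provided the standard product is invariant, which it is on an EJA.

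The key idea is to extract symmetry from the inner product to force each $\Theta_q$ to be trivial. First I would observe that the standard product operator $L_a$ is self-adjoint with respect to $\inn{\cdot,\cdot}$, since $\inn{a\mult b,c}=\inn{b,a\mult c}$ holds for the standard EJA product as well. Then for an invertible $q$ the relation $L_q^\prime = L_q\Theta_q$ together with symmetry of $\mult^\prime$ (i.e. $L_q^\prime$ self-adjoint) yields $L_q\Theta_q = (L_q\Theta_q)^* = \Theta_q^* L_q$. Because $L_q$ is an invertible positive self-adjoint operator, this is the statement that $L_q^{1/2}\Theta_q L_q^{1/2}$ is self-adjoint, or equivalently that $\Theta_q$ is self-adjoint with respect to the modified inner product $\inn{L_q\,\cdot\,,\cdot}$. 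Combined with the fact that $\Theta_q$ is an order isomorphism (hence a Jordan automorphism of $V_q$, so orthogonal for the trace inner product by Proposition \ref{prop:isomorphismgroup} ruling out antiautomorphisms via $\Theta_q=\Theta_{\sqrt q}^2$), I would aim to show $\Theta_q$ is simultaneously orthogonal and self-adjoint in a way that pins it to a reflection, and then use the square-root argument $\Theta_q=\Theta_{\sqrt q}^2$ to eliminate the $-1$ eigenspaces, giving $\Theta_q=\id$ for all invertible $q$.

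Once $\Theta_q=\id$ on a norm-dense set of invertible effects, the identity $q\mult^\prime a = q\mult a$ holds on a dense set, and by normality (or the finite-dimensionality of each $C(a)$) it extends to all effects, so $\mult^\prime$ coincides with the standard product. I would phrase the self-adjointness argument carefully, since the subtlety is that $\Theta_q$ is self-adjoint for one inner product and orthogonal for another, and one must reconcile these; the cleanest route is to diagonalise $L_q$ within the commutative algebra $C(q)$, observe that $\Theta_q$ commutes with $L_q$ by Proposition \ref{prop:isomorphismgroup}(1), and then on each spectral subspace conclude that the orthogonal-and-symmetric $\Theta_q$ must square to the identity, after which the square-root trick finishes the argument.

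\textbf{The main obstacle} I anticipate is making rigorous the step that turns the two compatible self-adjointness/orthogonality conditions on $\Theta_q$ into the conclusion $\Theta_q^2=\id$; the symmetric product gives self-adjointness in the inner product twisted by $L_q$, while the EJA structure gives orthogonality in the untwisted inner product, and carefully combining these — ideally by exploiting that $\Theta_q$ commutes with $L_q$ so that both conditions can be read off on common eigenspaces — is where the real work lies. The subsequent removal of sign ambiguity via $\Theta_q=\Theta_{\sqrt q}^2$ is then routine.
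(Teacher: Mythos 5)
Your overall strategy coincides with the paper's: reduce to a Euclidean Jordan algebra via Theorem \ref{theor:hilbertisEJA}, compare $\mult^\prime$ with the standard product through the order isomorphisms $\Theta_q$, prove $\Theta_q^2=\id$, and remove the sign with $\Theta_q=\Theta_{\sqrt{q}}^{\,2}$. However, two steps as you describe them have genuine gaps.

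First, the step you flag as the main obstacle dissolves once you use the commutation you already cite, and your proposed detour through the trace form is not sound. Since $\Theta_q$ commutes with $L_q$ and hence with $L_{q^{-1}}=L_q^{-1}$, the twisted relation $L_q\Theta_q=\Theta_q^*L_q$ untwists directly: $\inn{\Theta_q(b),c}=\inn{L_{q^{-1}}L_q^\prime(b),c}=\inn{L_q^\prime(b),L_{q^{-1}}(c)}=\inn{b,L_q^\prime L_{q^{-1}}(c)}=\inn{b,L_qL_{q^{-1}}\Theta_q(c)}=\inn{b,\Theta_q(c)}$, so $\Theta_q$ is self-adjoint for the \emph{given} inner product; no spectral decomposition of $L_q$ is needed. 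For orthogonality you invoke that Jordan automorphisms preserve the trace form, but the hypothesis only supplies the inner product $\inn{\cdot,\cdot}$, which need not be the trace form (on a non-simple EJA it could weight the factors differently), so orthogonality for the trace form does not give $\Theta_q^*\Theta_q=\id$ for the relevant adjoint. The correct route uses the self-adjointness just established together with invariance of the standard product and $\Theta_q(1)=1$: $\inn{\Theta_q^2(b),c}=\inn{\Theta_q(b),\Theta_q(c)}=\inn{1,\Theta_q(b)\mult\Theta_q(c)}=\inn{1,\Theta_q(b\mult c)}=\inn{\Theta_q(1),b\mult c}=\inn{b,c}$, whence $\Theta_q^2=\id$.

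Second, your extension from invertible $q$ to arbitrary effects by density does not work: the theorem assumes no norm-continuity of $\mult^\prime$ in its first argument (the paper explicitly advertises the absence of any continuity requirement), and normality only controls infima in the \emph{second} argument, so $q\mult^\prime b=q\mult b$ on a dense set of invertible $q$ does not propagate to all $q$. The paper instead first proves $L_p^\prime=L_p$ for sharp $p$ by a direct symmetry computation, $\inn{p\mult^\prime a,b}=\inn{p\mult(p\mult^\prime a),b}=\inn{p\mult^\prime a,p\mult b}=\inn{a,p\mult^\prime(p\mult b)}=\inn{a,p\mult b}=\inn{p\mult a,b}$, deduces $L_a^\prime=L_a^\prime L_{\ceil{a}}$, and restricts to the order ideal generated by $\ceil{a}$, where the (necessarily simple) effect $a$ is genuinely invertible. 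You should replace the density argument with this reduction.
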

\begin{proof}
	By theorem \ref{theor:hilbertisEJA} we know that $E$ is the unit interval of a Euclidean Jordan algebra so that its associated OUS $V$ is an EJA. Of course the standard sequential product is symmetric with respect to the inner product. We denote this product by $\mult$ and the product maps by $L_a$. We let the given sequential product be denoted by $\mult^\prime$ and the product maps by $L_a^\prime$.

	First of all, when $p$ is sharp we get $\inn{p\mult^\prime a, b} = \inn{p\mult(p\mult^\prime a),b} = \inn{p\mult^\prime a, p\mult b} = \inn{a, p\mult^\prime(p\mult b)} = \inn{a,p\mult b} = \inn{p\mult a,b}$ where we have used that $p\mult a\leq p$ and when $c\leq p$ we get $p\mult c = c$ (and the same for $\mult^\prime$). We conclude that $L^\prime_p = L_p$ when $p$ is sharp. As a result we have for an arbitrary effect $a$ the equality $L_a^\prime = L_a^\prime L_{\ceil{a}}$ so that we can restrict $V$ to the order ideal generated by $\ceil{a}$. It therefore will suffice to consider effects $a$ with $\ceil{a}=1$. By the previous corollary all elements are simple so that $\ceil{a}=1$ if and only if $a$ is invertible. By proposition \ref{prop:finiteranksequentialorder} we can then conclude that any sequential product sequentially orders the space so that $\mult^\prime$ is related to $\mult$ by unital order isomorphisms: $L_a^\prime = L_a\Theta_a = \Theta_aL_a$ (proposition \ref{prop:isomorphismgroup}). We will show that $\Theta_a^2 = \id$, which completes the proof because every positive invertible element is a square.

	First, the $\Theta_a$ are symmetric with respect to the inner product:
	\begin{align*}
		\inn{\Theta_a(b),c} &= \inn{L_{a^{-1}}L_a\Theta_a(b),c}=\inn{L_a^\prime(b),L_{a^{-1}}(c)} = \inn{b, L_a^\prime L_{a^{-1}}(c)} \\
		&= \inn{b, L_a \Theta_a L_{a^{-1}}(c)} = \inn{b,L_aL_{a^{-1}}\Theta_a(c)} = \inn{b,\Theta_a(c)}
	\end{align*}
	where we have used respectively that $L_{a^{-1}} = L_a^{-1}$, that $L_{a^{-1}}$ and $L_a^\prime$ are symmetric with respect to the inner product, that $L_a^\prime = L_a\Theta_a$ and that $\Theta_a$ commutes with $L_{a^{-1}}$. Now let $b$ and $c$ be arbitrary and note that $\inn{\Theta_a^2(b),c} = \inn{\Theta_a(b),\Theta_a(c)} = \inn{1,\Theta_a(b)\mult \Theta_a(c)} = \inn{1,\Theta_a(b\mult c)} = \inn{\Theta_a(1),b\mult c} = \inn{1,b\mult c} = \inn{b,c}$. Since $\Theta_a^2$ fixes the entire inner product we indeed have $\Theta_a^2 = \id$ and the proof is finished.
\end{proof}
\noindent \textbf{Note}: This characterisation is similar to the one given in \cite{gudder2008characterization}, but it is more general in that it applies to arbitrary Euclidean Jordan algebras, and not just to the effects on a Hilbert space. We also don't have any continuity requirement.

\subsection{Invertibility preservation characterises the sequential product}
The Jordan product and the quadratic representation of a Jordan algebra are very closely related. In fact, in the theory of \emph{quadratic Jordan algebras} the quadratic representation is taken as fundamental \cite{mccrimmon1966general} (and over the field of real numbers, quadratic Jordan algebras actually coincide with regular Jordan algebras). An essential part of the definition of a quadratic Jordan algebra is the following equality.

\begin{definition}
	The \emph{fundamental equality} for the quadratic representation of a Jordan algebra is the equation $Q_{Q_ab} = Q_aQ_bQ_a$ which holds for every element $a$ and $b$.
\end{definition}
Recall that the quadratic representation for a von Neumann algebra is given by $Q(a)b=aba$. In this case the fundamental equality says that for any $a$, $b$ and $c$ we have $(aba)c(aba) = a(b(aca)b)a$, a statement that is obviously true.

For the sequential product map $L_a:=Q_{\sqrt{a}}$ of a positive $a$ the fundamental equality translates into $L_{(a\mult b)^2} = L_aL_{b^2}L_a$. This property implies an interesting result regarding the inverses of effects.

\begin{definition}
	We call a sequential product $\mult$ with product maps $L_a$ \emph{quadratic} if for all effects $a$ and $b$ we have $L_{(a\mult b)^2} = L_aL_{b^2}L_a$. We say that $\mult$ is \emph{invertibility preserving} when for all positive invertible $a$ and $b$ their sequential product is invertible and $(a\mult b)^{-1}= a^{-1}\mult b^{-1}$.
\end{definition}

\begin{proposition}
	Let $E$ be a SES with a sequential product $\mult$. If $\mult$ is quadratic, then $\mult$ is invertibility preserving.
\end{proposition}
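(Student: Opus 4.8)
The plan is to reduce everything to two elementary identities about the multiplication operators and then feed them into the quadratic identity. First I would record that for any positive $v$ one has $L_{v^2}=L_v^2$: since $v\commu v$, axiom~4 gives $v\mult(v\mult c)=(v\mult v)\mult c=v^2\mult c$ for all $c$. Next, for an invertible effect $a$ the pseudo-inverse $a^{-1}$ commutes with $a$, so the same associativity yields $L_{a^{-1}}L_a=L_aL_{a^{-1}}=\id$; hence $L_{a^{-1}}=L_a^{-1}$, and since both maps are positive, $L_a$ is an order isomorphism. Finally I would observe that the quadratic identity $L_{(a\mult b)^2}=L_aL_{b^2}L_a$ is homogeneous of degree two in each of $a$ and $b$; since every positive element of the associated order unit space is a positive scalar times an effect and $L_{\lambda v}=\lambda L_v$ (proposition~\ref{prop:linearity}), the identity extends verbatim to all positive $v,w$.

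With these in hand, let $a,b$ be positive and invertible and set $d:=a\mult b$ and $e:=a^{-1}\mult b^{-1}$, both positive. Applying the extended quadratic identity twice gives $L_{d^2}=L_aL_b^2L_a$ and $L_{e^2}=L_{a^{-1}}L_{b^{-1}}^2L_{a^{-1}}=L_a^{-1}L_b^{-2}L_a^{-1}$, using $L_{a^{-1}}=L_a^{-1}$ and $L_{b^{-1}}=L_b^{-1}$. Multiplying these, the inner factors telescope, so $L_{d^2}L_{e^2}=\id=L_{e^2}L_{d^2}$. Evaluating both operator identities at $1$ then shows $d^2\mult e^2=e^2\mult d^2=1$, so in particular $d^2$ and $e^2$ commute.

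It remains to descend from the squares $d^2,e^2$ to $d,e$ themselves, which I expect to be the main obstacle: the quadratic identity naturally controls $d^2$ and $e^2$, and the work is in transferring invertibility and the identification of the inverse back down. Passing to the commutative subalgebra generated by the commuting pair $d^2,e^2$ (a copy of some $C(Y)$ by proposition~\ref{prop:bicommutant} and corollary~\ref{cor:commkadison}), the relation $d^2\mult e^2=1$ forces $d^2\ge\norm{e^2}^{-1}1>0$, so $d^2$ is bounded below. Hence $d$ is bounded below inside $C(d)$ and, by the functional calculus used in theorem~\ref{theor:homogen}, $d$ is invertible with $\ceil d=1$ and positive inverse $d^{-1}\in C(d)$ satisfying $(d^{-1})^2=(d^2)^{-1}=e^2$. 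Since $d^{-1}$ and $e$ are both positive square roots of $e^2$, uniqueness of the positive square root (which follows from approximating $\sqrt{\cdot}$ uniformly by polynomials and applying the functional calculus) gives $d^{-1}=e=a^{-1}\mult b^{-1}$. Thus $a\mult b$ is invertible with $(a\mult b)^{-1}=a^{-1}\mult b^{-1}$, i.e.\ $\mult$ is invertibility preserving.
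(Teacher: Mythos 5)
Your argument is correct and follows essentially the same route as the paper: apply the quadratic identity to both $(a,b)$ and $(a^{-1},b^{-1})$, telescope the multiplication operators to get $L_{(a\mult b)^2}L_{(a^{-1}\mult b^{-1})^2}=\id$ (and its reverse), insert $1$, and take square roots. Your extra care in extending the identity from effects to the positive cone and in justifying the final descent from $(a\mult b)^2$ to $a\mult b$ via the functional calculus and uniqueness of positive square roots simply fills in details the paper leaves implicit.
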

\begin{proof}
	Let $a$ and $b$ be positive invertible effects.
	$L_{(a^{-1}\mult b^{-1})^2}L_{(a\mult b)^2} = L_{a^{-1}}L_{b^{-2}}L_{a^{-1}}L_aL_{b^2}L_a = \id$ and similarly $L_{(a\mult b)^2}L_{(a^{-1}\mult b^{-1})^2}=\id$. Inserting $1$ into the expressions then shows that $(a^{-1}\mult b^{-1})^2=(a\mult b)^{-2}$. Taking square roots then gives the desired expression.
\end{proof}

This condition of invertibility preservation can also easily be seen to be true in a von Neumann algebra: $(a\mult b)^{-1} = (\sqrt{a}b\sqrt{a})^{-1} = \sqrt{a}^{-1}b^{-1}\sqrt{a}^{-1} = \sqrt{a^{-1}}b^{-1}\sqrt{a^{-1}} = a^{-1}\mult b^{-1}$.

\begin{proposition}
	Let $E$ be a SES with associated order unit space $V$ and denote the set of positive invertible elements of $V$ by $V_{>0}$. Suppose $E$ has a sequential product that is invertibility preserving, then the inverse map $\Phi: V_{>0}\rightarrow V_{>0}$ given by $\Phi(a):=a^{-1}$ is an order antiautomorphism: $a\leq b \iff b^{-1}\leq a^{-1}$.
\end{proposition}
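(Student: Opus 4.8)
The plan is to show that $\Phi$ is an order-reversing involution, since an order antiautomorphism is exactly an order-reversing bijection and the inverse map is self-inverse. The involution property is cleanest to extract inside the commutative subalgebra $C(a)$: by theorem \ref{theor:commbasicallydisconnect} this is (the unit interval of) some $C(X)$ in which $a$ is a strictly positive function $f$ and $a^{-1}$ is its pointwise reciprocal $1/f$, so that $(a^{-1})^{-1}=a$ and $\Phi$ is a bijection of $V_{>0}$ onto itself. It then suffices to prove the single implication $a\leq b\implies b^{-1}\leq a^{-1}$, because applying it to $b^{-1}\leq a^{-1}$ returns $a\leq b$ and so yields the full biconditional. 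Throughout I will use that for invertible positive $c$ the multiplication operator $L_c$ is an order isomorphism with inverse $L_{c^{-1}}$ (exactly as in the proof of theorem \ref{theor:homogen}, which needs only the monotonicity of $\mult$ and the associativity $c^{-1}\mult(c\mult x)=(c^{-1}\mult c)\mult x$ available from $c^{-1}\commu c$).

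The one genuinely new ingredient is a scalar-type monotonicity lemma: for positive invertible $x$ one has $x\geq 1\iff x^{-1}\leq 1$. I would again argue inside $C(x)\cong C(X)$, where $x$ is a function bounded below by some $\epsilon>0$, its inverse is the pointwise reciprocal, and the equivalence is immediate pointwise; since $C(x)$ carries the order inherited from $V$ and the Kadison embedding is an order embedding, the equivalence transfers back to $V$.

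With these pieces the argument is a short chain. Given $a\leq b$ in $V_{>0}$, apply the order isomorphism $L_{a^{-1}}$ to obtain $1=a^{-1}\mult a\leq a^{-1}\mult b$, and note that $x:=a^{-1}\mult b$ is positive invertible by invertibility preservation. The lemma gives $x^{-1}\leq 1$, while invertibility preservation identifies $x^{-1}=(a^{-1}\mult b)^{-1}=a\mult b^{-1}$, so that $a\mult b^{-1}\leq 1$. Applying $L_{a^{-1}}$ once more and using associativity, $b^{-1}=(a^{-1}\mult a)\mult b^{-1}=a^{-1}\mult(a\mult b^{-1})\leq a^{-1}\mult 1=a^{-1}$, which is the desired implication; combined with the involution property this gives $a\leq b\iff b^{-1}\leq a^{-1}$. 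I expect the main obstacle to be conceptual rather than computational: recognising that invertibility preservation is precisely the algebraic identity $(c\mult x)^{-1}=c^{-1}\mult x^{-1}$ that lets one conjugate the trivial monotonicity of the scalar reciprocal by the order isomorphisms $L_{a^{-1}}$, thereby upgrading a pointwise fact in $C(X)$ to the global order-reversal on $V_{>0}$.
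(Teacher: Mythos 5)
Your proof is correct and uses essentially the same argument as the paper: conjugate the trivial inequality by the order isomorphism $L$ of an inverse, rewrite via invertibility preservation, and invoke the scalar fact $x\geq 1\iff x^{-1}\leq 1$. The paper runs this as a single chain of equivalences $a^{-1}\leq b\iff b^{-1}\mult a^{-1}=(b\mult a)^{-1}\leq 1\iff 1\leq b\mult a\iff b^{-1}\leq a$ (leaving the involution $(a^{-1})^{-1}=a$ and the scalar lemma implicit), whereas you make those two small ingredients explicit and close the biconditional by applying the single implication twice --- a cosmetic rather than substantive difference.
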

\begin{proof}
	Let $a$ and $b$ be positive and invertible and suppose $a^{-1}\leq b$. Because $L_{b^{-1}}$ is an order automorphism we get $a^{-1}\leq b \iff b^{-1}\mult a^{-1}\leq b^{-1}\mult b = 1$ and then using invertibility preservation: $b^{-1}\mult a^{-1}=(b\mult a)^{-1}\leq 1\iff 1\leq b\mult a \iff b^{-1}\leq b^{-1}\mult (b\mult a) = a$.
\end{proof}

In theorem \ref{theor:uniqueinvariantproduct} we gave a characterisation of the standard sequential product as the unique one satisfying $\Phi(a\mult b)=\Phi(a)\mult\Phi(b)$ for all unital order automorphisms. The above proposition shows that any SES that allows an invertibility preserving sequential product (such as a von Neumann algebra) has $a\leq b \iff b^{-1}\leq a^{-1}$ so that the inverse map is a unital order \emph{anti}automorphism. The following characterisation can therefore be seen as complementary to theorem \ref{theor:uniqueinvariantproduct}.

\begin{theorem}\label{theor:uniquequadraticproduct}
	Let $E$ be a SES with an invariant, norm-continuous and invertibility preserving sequential product $\mult$. If $\mult^\prime$ is a norm-continuous and invertibility preserving sequential product, then $\mult=\mult^\prime$.
\end{theorem}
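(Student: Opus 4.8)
The plan is to compare the two products through the unital order isomorphisms that relate them and to show these isomorphisms are trivial on a norm-dense set. Since both $\mult$ and $\mult^\prime$ are norm-continuous in the first argument, and since the simple invertible effects are norm-dense (any simple effect $\sum_i\lambda_ip_i$ can be perturbed to a simple invertible one, e.g.\ by adding a small multiple of $1$), it suffices to prove $a\mult^\prime b = a\mult b$ for all $b$ whenever $a$ is a simple invertible effect. For such an $a$ we have $\ceil{a}=1$, so $a$ is order preserving by proposition \ref{prop:finiteranksequentialorder}, and proposition \ref{prop:relatedsequentialproducts} supplies a unital order isomorphism $\Theta_a:V\rightarrow V$ with $L_a^\prime = L_a\Theta_a$. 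The whole problem then reduces to showing $\Theta_a = \id$, since that gives $L_a^\prime = L_a$.

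First I would extract a relation from invertibility preservation. Write $J$ for the order-theoretic inverse map $J(x)=x^{-1}$ on the positive invertible cone $V_{>0}$; by the proposition immediately preceding the theorem this is a unital order antiautomorphism, and on simple effects it does not depend on the chosen product. Invertibility preservation of $\mult$ gives $JL_cJ = L_{c^{-1}}$, and likewise $JL_c^\prime J = L_{c^{-1}}^\prime$ for $\mult^\prime$. Running $\Theta_a = L_a^{-1}L_a^\prime$ through these, using $L_a^{-1}=L_{a^{-1}}$ and $\Theta_{a^{-1}}=\Theta_a^{-1}$ from proposition \ref{prop:isomorphismgroup}, I expect
\[
J\Theta_aJ = (JL_a^{-1}J)(JL_a^\prime J) = L_aL_{a^{-1}}^\prime = L_aL_{a^{-1}}\Theta_{a^{-1}} = \Theta_a^{-1}.
\]

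The essential extra ingredient is the invariance of $\mult$. Because $\Theta_a$ is itself a unital order isomorphism, invariance gives $\Theta_a(x\mult y)=\Theta_a(x)\mult\Theta_a(y)$; applying this to $x\mult x^{-1}=1$ and using uniqueness of inverses shows $\Theta_a(x^{-1})=\Theta_a(x)^{-1}$, i.e.\ $\Theta_a$ commutes with $J$, so that $J\Theta_aJ = \Theta_a$. Comparing this with the relation of the previous paragraph yields $\Theta_a = \Theta_a^{-1}$, hence $\Theta_a^2=\id$. This holds for every simple invertible effect, in particular for $\sqrt a$, and since $\Theta_a = \Theta_{\sqrt a\mult\sqrt a}=\Theta_{\sqrt a}^2$ by proposition \ref{prop:isomorphismgroup}, I conclude $\Theta_a=\id$, completing the reduction.

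The hard part is conceptual rather than computational. Invertibility preservation of \emph{both} products by itself delivers only $J\Theta_aJ=\Theta_a^{-1}$, which is far too weak to force $\Theta_a$ to be trivial; the entire point of the argument is that invariance of $\mult$ supplies the complementary identity $J\Theta_aJ=\Theta_a$, and it is only the collision of these two that pins $\Theta_a$ down. The device of first proving $\Theta_a^2=\id$ and then bootstrapping to $\Theta_a=\id$ via square roots mirrors the proof of theorem \ref{theor:uniqueinvariantproduct}, with the inverse map $J$ now playing exactly the role that the transpose played there; this is the precise sense in which the present characterisation is complementary to that one.
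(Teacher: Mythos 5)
Your proposal is correct and is essentially the paper's own proof: both arguments combine invertibility preservation of the two products with the invariance-derived identity $\Theta_a(c^{-1})=\Theta_a(c)^{-1}$ to force $\Theta_{a^{-1}}(b^{-1})=\Theta_a(b^{-1})$, i.e.\ $\Theta_a^2=\id$, and then bootstrap to $\Theta_a=\Theta_{\sqrt a}^2=\id$ before invoking density of the simple invertible effects and norm-continuity. The only difference is presentational: you package the computation as conjugation by the inverse map $J$ ($J\Theta_aJ=\Theta_a^{-1}$ versus $J\Theta_aJ=\Theta_a$), whereas the paper runs the same identities through an elementwise chain of equalities.
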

\begin{proof}
	Let $a,b\in E_0$ be simple and invertible.
	We note that $a\mult^\prime b = (a\mult \Theta_a(b)) = \Theta_a (a\mult b)$ by proposition \ref{prop:isomorphismgroup} and similarly $a^{-1}\mult^\prime b^{-1} = \Theta_{a^{-1}}(a^{-1}\mult b^{-1})$. We also have $1=\Theta_a(1)=\Theta_a(c\mult c^{-1}) = \Theta_a(c)\mult \Theta_a(c^{-1})$ so that $\Theta_a(c)^{-1}=\Theta_a(c^{-1})$ for all invertible $c$.

	Now $a^{-1}\mult (\Theta_{a^{-1}}(b^{-1})) = a^{-1}\mult^\prime b^{-1} = (a\mult^\prime b)^{-1} = (a\mult \Theta_a(b))^{-1}  =  \Theta_a (a\mult b)^{-1} = \Theta_a((a\mult b)^{-1}) = \Theta_a(a^{-1}\mult b^{-1}) = a^{-1}\mult (\Theta_a (b^{-1}))$. Applying $L_a$ to the left on both sides we get $\Theta_a^{-1}(b^{-1}) = \Theta_a(b^{-1})$ from which we conclude that $\Theta_a^2(b^{-1}) = b^{-1}$ for all simple invertible $a$ and $b$. Since the $\Theta_a$ are continuous and the linear span of the simple invertible effects lies norm-dense in the space we conclude that $\Theta_a^2 = \Theta_{a^2} = \id$. All invertible effects are squares so that $a\mult^\prime b = a\mult b$ for all simple invertible $a$. Because the sequential product is assumed to be norm-continuous in the first argument and the simple invertible effects lie dense in $E$ we are done.
\end{proof}
\noindent\textbf{Note}: We only need the invariance of $\mult$ to make sure proposition \ref{prop:isomorphismgroup} holds. It is not clear whether this condition is necessary. The conditions stipulated in the above theorem are in any case satisfied by the unit interval of any $C^*$-algebra in which the linear span of the projections lie norm-dense in the algebra. It is satisfied in particular by von Neumann algebras or more generally AW$^*$-algebras. The conditions are also satisfied by Euclidean Jordan algebras or, more generally, by JBW-algebras, their infinite-dimensional counterpart \cite{alfsen2012geometry}. 

\section{Conclusion}
We have studied convex $\sigma$-sequential effect algebras and have shown that they have a structure similar to that of operator algebras considered in quantum theory. We have used our results to give three new characterisations of the sequential product. While the scope of each of these characterisations is slightly different, they all hold at least for effects on a Hilbert space, and for Euclidean Jordan algebras.

A couple of questions remain. 
\begin{itemize}
	\item By theorem \ref{theor:hilbertisEJA} an inner product is enough to conclude that convex $\sigma$-sequential effect algebras are Jordan algebras. Is this additional assumption of an inner product necessary? Or, in other words, do there exist convex $\sigma$-sequential effect algebras that are not Jordan algebras?
	\item For our results regarding uniqueness of the sequential product we needed to assume the existence of an invariant sequential product. Is there a convex sequential effect algebra that does not have an invariant sequential product?
	\item Regarding the scope and conditions of our uniqueness theorems, does the characterisation by invariance of theorem \ref{theor:uniqueinvariantproduct} hold for an arbitrary von Neumann algebra (instead of just for factor I algebras) and how essential is the condition of norm-continuity in theorems \ref{theor:uniqueinvariantproduct} and \ref{theor:uniquequadraticproduct}?
\end{itemize}

It is interesting to note that the three characterisations are quite different in nature, being based on respectively preservation of the product by order isomorphisms, symmetry with respect to an inner product, and an algebraic property, while still having some striking resemblances in each of their proofs. In particular, all the proofs rely on showing that some order isomorphism must square to the identity and then showing that these isomorphisms must already have been a square. This same proof step also occurs in the characterisations of \cite{gudder2008characterization} and \cite{westerbaan2016universal} and is the reason the authors of these papers include the axiom that $a\mult (a\mult b) = (a\mult a)\mult b$. It might be interesting to see whether a characterisation of the sequential product can be found that does not include a variation on this axiom.

\textbf{Acknowledgements}: The author would like to thank Bas and Bram Westerbaan for all the useful and insightful conversations regarding effect algebras and order unit spaces. This work is supported by the ERC under the European Union’s Seventh Framework Programme (FP7/2007-2013) / ERC grant n$^\text{o}$ 320571.

\bibliographystyle{eptcs}
\bibliography{../bibliography}

\appendix

\section{Euclidean Jordan algebras have a sequential product}
Let $V$ be a Euclidean Jordan algebra with $a,b\in V$ positive and let $a\mult b = Q_{\sqrt{a}} b$. Without further reference we will use that $Q_a^2 = Q_{a^2}$ and that $Q_aQ_bQ_a = Q_{Q_a b}$ (the \emph{fundamental equality}). 
\begin{itemize}
	\item Obviously $1\mult a = a$. 
	\item By \cite[Lemma 1.26]{alfsen2012geometry} $a\mult b = 0 \iff b\mult a =0$.
	\item By \cite[Proposition 2.4]{alfsen2012geometry} the map $Q_a$ is normal for all $a$ so that if $a\mult b$ is a sequential product, then it will indeed preserve infima.
\end{itemize}
\begin{proposition}
	Effects $a$ and $b$ commute ($a\mult b = b\mult a$) if and only if their quadratic representations commute: $[Q_a,Q_b]=Q_aQ_b-Q_bQ_a=0$.
\end{proposition}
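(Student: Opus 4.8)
The plan is to rewrite everything in terms of quadratic representations and to use that $Q_{\sqrt a}$ is the positive square root of $Q_a$. First I would collect three elementary facts. Because the EJA inner product satisfies $\inn{a*b,c}=\inn{b,a*c}$, every $L_a$ is self-adjoint, and hence so is $Q_a=2L_a^2-L_{a^2}$; moreover $Q_a$ is a positive operator whenever $a\geq 0$, which is standard \cite{alfsen2012geometry}. Using the identity $Q_c^2=Q_{c^2}$ we have $Q_{\sqrt a}^2=Q_a$, so $Q_{\sqrt a}$ is precisely the unique positive square root $\sqrt{Q_a}$. Finally $Q_c(1)=2c*(c*1)-c^2*1=c^2$, so the sequential product can be written $a\mult b=Q_{\sqrt a}(b)=Q_{\sqrt a}Q_{\sqrt b}(1)$ and likewise $b\mult a=Q_{\sqrt b}Q_{\sqrt a}(1)$.

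For the implication $[Q_a,Q_b]=0\Rightarrow a\mult b=b\mult a$ the argument is short. Commuting self-adjoint operators have commuting continuous functional calculi, and since $Q_{\sqrt a}=\sqrt{Q_a}$ and $Q_{\sqrt b}=\sqrt{Q_b}$ are norm-limits of polynomials in $Q_a$ and $Q_b$, the hypothesis gives $[Q_{\sqrt a},Q_{\sqrt b}]=0$. Evaluating at $1$ then yields $a\mult b=Q_{\sqrt a}Q_{\sqrt b}(1)=Q_{\sqrt b}Q_{\sqrt a}(1)=b\mult a$.

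For the converse I would route through the structure theory in the body of the paper. If $a\mult b=b\mult a$ then $a$ and $b$ are compatible in the SEA $[0,1]_V$, so by proposition \ref{prop:bicommutant} their bicommutant $\{a,b\}''$ is a commutative convex $\sigma$-SEA containing both, and by corollary \ref{cor:commkadison} it is the unit interval of some $C(X)$ whose sequential product is pointwise multiplication. On the associated order-unit subspace $W\subseteq V$ this product is therefore commutative, associative and bilinear, and it agrees with the Jordan product on commuting effects (indeed $Q_{\sqrt a}(b)=a*b$ once $a$ and $b$ operator commute); hence $W$ is an associative Jordan subalgebra and $a,b\in W$ operator commute in $V$. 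I would then close the proof by invoking the standard Jordan-theoretic equivalence that $a$ and $b$ operator commute, $[L_a,L_b]=0$, if and only if $[Q_a,Q_b]=0$ \cite{alfsen2012geometry}.

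The main obstacle is exactly this converse. The delicate point is upgrading the single vector equation $Q_{\sqrt a}Q_{\sqrt b}(1)=Q_{\sqrt b}Q_{\sqrt a}(1)$ to the full operator identity $[Q_a,Q_b]=0$: this cannot follow from self-adjointness and positivity alone, since two positive self-adjoint operators $S,T$ with $ST(1)=TS(1)$ need not commute, so the argument genuinely uses that $S=Q_{\sqrt a}$ and $T=Q_{\sqrt b}$ are quadratic representations. Producing a common associative subalgebra via proposition \ref{prop:bicommutant} and corollary \ref{cor:commkadison}, and only then appealing to the equivalence of operator commutation with commutation of the quadratic representations, is what makes this step go through.
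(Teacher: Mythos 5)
Your forward direction is correct and is essentially the paper's: $Q_{\sqrt a}$ is the positive square root of $Q_a$ on the Hilbert space $V$, so $[Q_a,Q_b]=0$ forces $[Q_{\sqrt a},Q_{\sqrt b}]=0$, and evaluating at $1$ gives $a\mult b=b\mult a$.

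The converse has a genuine gap: it is circular. This proposition sits in the appendix whose entire purpose is to \emph{prove} that $[0,1]_V$ with $a\mult b=Q_{\sqrt a}b$ is a convex $\sigma$-sequential effect algebra, and it is precisely the lemma from which the SEA axioms (4) and (5) are subsequently derived there. You therefore cannot appeal to proposition \ref{prop:bicommutant} or corollary \ref{cor:commkadison}: both take as hypothesis that the ambient structure already is a (convex $\sigma$-)SEA, and the proof of \ref{prop:bicommutant} uses exactly axioms (4) and (5) to show that commutants are subalgebras. A second circularity hides in the step where you identify the product on $\{a,b\}''$ with the Jordan product via ``$Q_{\sqrt a}(b)=a*b$ once $a$ and $b$ operator commute'': operator commutation is what you are trying to establish, so you are assuming the conclusion. (You also describe the equivalence $[T_a,T_b]=0\iff[Q_a,Q_b]=0$ as standard; the paper explicitly remarks that it could not locate a reference and proves it separately, \emph{after} this proposition.) The paper's converse avoids all structure theory and is purely operator-theoretic: from $a^2\mult b^2=b^2\mult a^2$ one gets $Q_a b^2=Q_b a^2$, the fundamental equality upgrades this to $Q_aQ_b^2Q_a=Q_bQ_a^2Q_b$, and the Gudder--Nagy result for positive operators on a Hilbert space (if $AB^2A=BA^2B$ then $AB^2=B^2A$) yields that $Q_{a^2}$ and $Q_{b^2}$ commute; relabelling $a^2,b^2$ as $a,b$ finishes. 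Your observation that the single vector identity $Q_{\sqrt a}Q_{\sqrt b}(1)=Q_{\sqrt b}Q_{\sqrt a}(1)$ cannot be upgraded by positivity alone is exactly right --- but the missing ingredient is the fundamental equality plus the Gudder--Nagy theorem, not the bicommutant machinery from the body of the paper.
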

\begin{proof}
	An EJA is a Hilbert space, and $Q_a$ and $Q_b$ are bounded positive operations (\cite[Theorem 1.25]{alfsen2012geometry}) so $Q_a, Q_b \in B(V)$ and $Q_a,Q_b\geq 0$.
	When $[Q_a,Q_b]=0$ we then also have $[Q_{\sqrt{a}},Q_{\sqrt{b}}]=0$ so that indeed $a\mult b = Q_{\sqrt{a}}b = Q_{\sqrt{a}}Q_{\sqrt{b}}1 = Q_{\sqrt{b}}Q_{\sqrt{a}}1 = b\mult a$.

	 For the other direction suppose we have $a^2\mult b^2 = b^2\mult a^2$. This holds precisely when $Q_a b^2 = Q_b a^2$. By the fundamental equality we then have $Q_aQ_b^2Q_a = Q_{Q_a b^2} = Q_{Q_b a^2} = Q_bQ_a^2Q_b$. By \cite[Theorem 2]{gudder2002sequentially} we see that when $Q_aQ_b^2Q_a = Q_bQ_a^2Q_b$ then $Q_aQ_b^2 = Q_b^2 Q_a$ and $Q_bQ_a^2 = Q_a^2 Q_b$. We conclude that $Q_a^2$ and $Q_b$ commute so $Q_a^2=Q_{a^2}$ also commutes with $Q_b^2=Q_{b^2}$. Replacing $a^2$ and $b^2$ with $a$ and $b$ gives the desired conclusion.
\end{proof}
\begin{corollary}
	If $a$ and $b$ commute, then $a\mult(b\mult c) = (a\mult b)\mult c$ and if additionally $a$ commutes with $c$, then $a\commu (b\mult c)$.
\end{corollary}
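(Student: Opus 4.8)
The plan is to push everything down to operator identities for the quadratic representations and to lean on the preceding proposition, which tells us that for effects $x \commu y$ is equivalent to $[Q_x,Q_y]=0$; its proof moreover shows that $x \commu y$ forces $[Q_{\sqrt x},Q_{\sqrt y}]=0$ as well. Throughout I would write the products as $a\mult b = Q_{\sqrt a}b$ and use, without comment, the fundamental equality $Q_{Q_s t}=Q_sQ_tQ_s$, the relation $Q_s^2=Q_{s^2}$, and the facts that each $Q_s$ is a bounded, positive, self-adjoint operator on the ambient Hilbert space (\cite[Theorem 1.25]{alfsen2012geometry}).

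For the associativity statement, unfolding the definitions gives $a\mult(b\mult c)=Q_{\sqrt a}Q_{\sqrt b}c$ and $(a\mult b)\mult c=Q_{\sqrt{a\mult b}}c$, so it suffices to prove the operator identity $Q_{\sqrt a}Q_{\sqrt b}=Q_{\sqrt{a\mult b}}$. Since $a\commu b$ we have $[Q_{\sqrt a},Q_{\sqrt b}]=0$, hence also $[Q_{\sqrt a},Q_b]=0$ as $Q_b=Q_{\sqrt b}^2$. Applying the fundamental equality to $a\mult b=Q_{\sqrt a}b$ then yields
\[ Q_{a\mult b}=Q_{Q_{\sqrt a}b}=Q_{\sqrt a}Q_bQ_{\sqrt a}=Q_{\sqrt a}^2Q_b=Q_aQ_b. \]
Squaring the candidate identity gives $(Q_{\sqrt a}Q_{\sqrt b})^2=Q_{\sqrt a}^2Q_{\sqrt b}^2=Q_aQ_b=Q_{a\mult b}=Q_{\sqrt{a\mult b}}^2$, where the first step uses that $Q_{\sqrt a}$ and $Q_{\sqrt b}$ commute. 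Both $Q_{\sqrt a}Q_{\sqrt b}$ (a product of two commuting positive operators, hence positive and self-adjoint) and $Q_{\sqrt{a\mult b}}$ are positive operators squaring to $Q_{a\mult b}$, so by uniqueness of the positive square root of a positive operator they coincide, and associativity follows.

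For the second statement I want to show $a\commu(b\mult c)$. Note first that $b\mult c=Q_{\sqrt b}c\leq Q_{\sqrt b}1=b\leq 1$ is again an effect, so by the preceding proposition it is enough to establish $[Q_a,Q_{b\mult c}]=0$. The fundamental equality rewrites $Q_{b\mult c}=Q_{Q_{\sqrt b}c}=Q_{\sqrt b}Q_cQ_{\sqrt b}$, so it suffices to see that $Q_a$ commutes with each of the factors $Q_{\sqrt b}$ and $Q_c$. Commutation with $Q_c$ is exactly the extra hypothesis $a\commu c$, and commutation with $Q_{\sqrt b}$ follows from $a\commu b$ as in the previous paragraph (which gives $[Q_{\sqrt a},Q_{\sqrt b}]=0$ and hence $[Q_a,Q_{\sqrt b}]=0$). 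Therefore $Q_a$ commutes with $Q_{\sqrt b}Q_cQ_{\sqrt b}=Q_{b\mult c}$, which is the desired conclusion.

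The only genuinely delicate point is the passage from the \emph{squared} identity $Q_aQ_b=Q_{a\mult b}$, which drops straight out of the fundamental equality, to the \emph{square-root} identity $Q_{\sqrt a}Q_{\sqrt b}=Q_{\sqrt{a\mult b}}$ that associativity actually needs. This is where the uniqueness of positive operator square roots does the work, and it is the step that forces reliance on the self-adjointness and positivity of the quadratic representations on the ambient Hilbert space rather than on purely algebraic manipulation.
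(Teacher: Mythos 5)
Your proof is correct, and its overall shape matches the paper's: both arguments reduce associativity to the operator identity $Q_{\sqrt{a}}Q_{\sqrt{b}}=Q_{\sqrt{a\mult b}}$ via the fundamental equality and the fact (from the preceding proposition) that $a\commu b$ forces the quadratic representations of the relevant powers of $a$ and $b$ to commute, and the second half of your argument is essentially identical to the paper's (indeed slightly more careful, since the paper writes $Q_{Q_b c}$ where $Q_{Q_{\sqrt{b}}c}$ is what is actually needed, and you also verify that $b\mult c$ is an effect before invoking the proposition). The one place you genuinely diverge is the square-root step. The paper works at the level of elements of the Jordan algebra: it first derives $(Q_a b)^2=Q_{a^2}b^2$, hence $\sqrt{Q_a b}=Q_{\sqrt{a}}\sqrt{b}$, and then factors $Q_{\sqrt{a}}=Q_{a^{1/4}}^2$ so that the fundamental equality gives $Q_{\sqrt{a}}Q_{\sqrt{b}}=Q_{a^{1/4}}Q_{\sqrt{b}}Q_{a^{1/4}}=Q_{Q_{a^{1/4}}\sqrt{b}}=Q_{\sqrt{a\mult b}}$. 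You instead square the candidate identity to $(Q_{\sqrt{a}}Q_{\sqrt{b}})^2=Q_aQ_b=Q_{a\mult b}=Q_{\sqrt{a\mult b}}^2$ and invoke uniqueness of the positive square root of a bounded positive operator on the Hilbert space $V$, having correctly noted that a product of two commuting positive self-adjoint operators is again positive. Both routes are sound; yours trades the introduction of fourth roots for a standard operator-theoretic fact, leaning a little more on the Hilbert-space structure of the EJA --- but the paper's own proof of the preceding proposition already uses exactly that structure to pass from $[Q_a,Q_b]=0$ to $[Q_{\sqrt{a}},Q_{\sqrt{b}}]=0$, so nothing new is being assumed.
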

\begin{proof}
	If $a$ and $b$ commute then their quadratic representations commute so we calculate $(Q_a b)^2 = Q_{Q_a b} 1 = Q_aQ_bQ_a 1 = Q_{a^2} Q_b 1 = Q_{a^2} b^2$. By taking square roots we also have $\sqrt{Q_a b} = Q_{\sqrt{a}} \sqrt{b}$. We use this in the following calculation to get the desired result.
	$$a\mult(b\mult c) = Q_{\sqrt{a}} Q_{\sqrt{b}} c = Q_{a^{1/4}} Q_{\sqrt{b}}Q_{a^{1/4}}\,c = Q_{Q_{a^{1/4}} \sqrt{b}} c = Q_{\sqrt{Q_{\sqrt{a}} b}} c = (a\mult b)\mult c.$$

	If $a\commu b$ and $a\commu c$, then $Q_aQ_b = Q_bQ_a$ and $Q_aQ_c =Q_cQ_a$, so that also $Q_{Q_bc} Q_a = Q_bQ_cQ_b Q_a = Q_a Q_bQ_cQ_b = Q_a Q_{Q_bc}$ so that indeed $a\commu (b\mult c)$.
\end{proof}

It seems very unlikely that the following proposition is not already known, but nevertheless the author couldn't find any reference of it in the literature.
\begin{proposition}
	Let $T_a(b) = a*b$ denote the Jordan multiplication of $a$. For effects $a$ and $b$ we have $[Q_a,Q_b]=0$ if and only if $[T_a,T_b]=0$.
\end{proposition}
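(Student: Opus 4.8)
The plan is to show that each of the two operator-commutation conditions is equivalent to the single structural statement that $a$ and $b$ lie in a common associative (equivalently commutative) subalgebra of $V$, and then to read off the result from the behaviour of $Q$ and $T$ on such a subalgebra. Throughout I will use the two identities tying the quadratic representation to the Jordan multiplication: the definitional $Q_a = 2T_a^2 - T_{a^2}$, and its polarisation, which (using $Q_1 = \id$ and $a*1 = a$) gives $Q_{a+1} - Q_a - \id = 2T_a$, i.e. $T_a = \tfrac12(Q_{a+1} - Q_a - \id)$. These two relations let me pass freely between the $T$-picture and the $Q$-picture.

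For the implication $[Q_a,Q_b]=0 \Rightarrow [T_a,T_b]=0$ I would invoke the preceding proposition, which identifies $[Q_a,Q_b]=0$ with $a\mult b = b\mult a$. Since $a\commu b$, the set $\{a,b\}$ is mutually commuting, so by Proposition \ref{prop:bicommutant} its bicommutant $\{a,b\}''$ is a commutative sub-SES containing both $a$ and $b$; by Theorem \ref{theor:commbasicallydisconnect} it is the unit interval of some $C(X)$, and in particular $a$ and $b$ sit in a common associative subalgebra. On such a subalgebra the spectral theorem (Theorem \ref{theor:spectraltheorem}) yields a simultaneous decomposition of $a$ and $b$ into orthogonal sharp effects, and relative to the resulting Peirce decomposition of the whole space both $T_a, T_b$ (acting on $V_{ij}$ by $\tfrac12(\alpha_i+\alpha_j)$ and $\tfrac12(\beta_i+\beta_j)$) are diagonal, hence commute.

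For the converse $[T_a,T_b]=0 \Rightarrow [Q_a,Q_b]=0$ I would use the classical fact that in a Euclidean Jordan algebra operator commutativity $T_aT_b = T_bT_a$ already forces $a$ and $b$ into a common associative subalgebra. This is precisely the point where the Euclidean (formally real) structure is essential: a purely identity-based argument does not work, since in a merely special realisation $[T_a,T_b]=0$ only makes $[a,b]$ central rather than zero. Once $a$ and $b$ are simultaneously diagonalised, one has $Q_a = \sum_{i\le j}\alpha_i\alpha_j P_{ij}$ and $Q_b = \sum_{i\le j}\beta_i\beta_j P_{ij}$ in terms of the mutually orthogonal Peirce projections $P_{ij}$, so $[Q_a,Q_b]=0$ is immediate.

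The main obstacle is this reverse step, specifically justifying that operator commutativity genuinely produces a common associative subalgebra (and hence a simultaneous spectral/Peirce decomposition) rather than only a central commutator; this is the one ingredient that cannot be extracted from the linearised Jordan identities alone and must draw on the Euclidean structure, via either the standard operator-commutativity theorem for EJAs or the spectral apparatus of Section \ref{sec:structureofSES}. A secondary technical point, since the footnote permits infinite-dimensional EJAs, is to phrase the Peirce decomposition and the diagonality of $Q_a, Q_b$ without presupposing a finite Jordan frame; working inside the commutative subalgebra $\cong C(X)$, where both $Q$ and $T$ reduce to multiplication operators, handles all cases uniformly.
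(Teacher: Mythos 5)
There is a genuine gap in the direction $[Q_a,Q_b]=0\Rightarrow[T_a,T_b]=0$: your argument is circular in the context where this proposition lives. It is a lemma in the appendix whose sole purpose is to \emph{verify} that the unit interval of an EJA is a sequential effect algebra (the corollary immediately following it supplies the missing SEA axioms $a\commu b\Rightarrow a\commu b^\perp$ and $a\commu b, a\commu c\Rightarrow a\commu(b+c)$). You cannot therefore invoke Proposition \ref{prop:bicommutant}, Theorem \ref{theor:commbasicallydisconnect} or Theorem \ref{theor:spectraltheorem}, since all of those presuppose the full (convex $\sigma$-)SEA structure that is not yet available for the EJA at this point --- indeed the lemma that $S'$ is closed under complementation, on which \ref{prop:bicommutant} rests, is exactly one of the axioms this proposition is needed to establish. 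Stripped of that machinery, your forward direction reduces to the bare assertion that $[Q_a,Q_b]=0$ forces $a$ and $b$ into a common associative subalgebra, which is precisely the nonstandard content of the proposition (the author remarks that no reference for it could be found); the classical operator-commutativity theorem for EJAs is stated for $T$, not for $Q$, so it cannot be cited here either.

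The paper closes this gap by a different device: reduce to invertible $a,b$, use the identity $Q_{\exp a}=\exp(2T_a)$ to write $Q_a=\exp(2T_{\ln a})$, and use the fact that the exponential of a bounded operator lies in its bicommutant in $B(V)$ to pull $[Q_a,Q_b]=0$ back to $[T_{\ln a},T_{\ln b}]=0$; then the \emph{classical} spectral theorem inside the commutative associative subalgebra generated by $\ln a$ and $\ln b$ recovers $a=\exp\ln a$, $b=\exp\ln b$ and hence $[T_a,T_b]=0$. Your converse direction ($[T_a,T_b]=0\Rightarrow[Q_a,Q_b]=0$ via the standard operator-commutativity theorem and a simultaneous Peirce/functional-calculus picture) is sound and matches the paper's one-line argument from $Q_a=2T_a^2-T_{a^2}$, but the forward direction needs the exponential trick or some other non-circular substitute for your appeal to the SES structure theory.
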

\begin{proof}
	Recall that $Q_a = 2T_a^2 - T_{a^2}$ so if $[T_a,T_b]=0$ we also have $[Q_a,Q_b]=0$.

	So for the other direction lets suppose $[Q_a,Q_b]=0$. It suffices to consider invertible $a$ and $b$ because otherwise we note that $Q_aQ_b = Q_{\ceil{a}}Q_aQ_b = Q_{\ceil{a}}Q_bQ_a = Q_{\ceil{a}}Q_b Q_a Q_{\ceil{a}}$ so that we can restrict to the sub-algebra where they are invertible. By \cite[Proposition II.3.4]{faraut1994analysis} we have $Q_{\exp a} = \exp(2T(a))$. By the invertibility of $a$, its logarithm $\ln a$ can be defined using the spectral theorem. We write $Q_{a} = Q_{\exp(\ln(a))} = \exp(2T(\ln(a)))$ so that $[\exp(T(\ln(a))), Q_b]=0$. The exponent of an operator lies in its bicommutant allowing us to conclude that also $[T(\ln(a)),Q_b]=0$ and by the same reasoning but with $b$ instead of $a$ we get $[T(\ln a), T(\ln b)]=0$. Reasoning inside of $V$ instead of $B(V)$ we can look at the Jordan algebra spanned by $\ln a$ and $\ln b$. This is a commutative associative algebra so that by \cite[Proposition 1.12]{alfsen2012geometry} we have a spectral theorem that allows us to define $\exp\ln a = a$ and $\exp\ln b = b$ that also still operator commute: $[T(a),T(b)]=0$.
\end{proof}
\begin{corollary}
	If $[Q_a,Q_b]=0$ then $Q_ab = T_{a^2}b = a^2*b$.
\end{corollary}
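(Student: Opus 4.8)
The plan is to reduce immediately to the associative setting already produced by the preceding proposition. That proposition tells us $[Q_a,Q_b]=0$ is equivalent to $[T_a,T_b]=0$, so the hypothesis hands us that $T_a$ and $T_b$ operator commute. The fact I want to exploit — and which is exactly what the proof of the preceding proposition establishes — is that operator-commuting elements of an EJA lie inside a common \emph{commutative associative} Jordan subalgebra $W\subseteq V$. In particular $a$, $a^2$ and $b$ all sit in a single subalgebra on which the product $*$ is genuinely associative.

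Once we are inside $W$ the computation is immediate. Recall the identity $Q_a = 2T_a^2 - T_{a^2}$, so that
\[ Q_a b = 2\,a*(a*b) - a^2*b. \]
Because $*$ is associative on $W$ and $a,b\in W$, we have $a*(a*b) = (a*a)*b = a^2*b$. Substituting this into the expression above, the two contributions collapse into one:
\[ Q_a b = 2\,(a^2*b) - a^2*b = a^2*b = T_{a^2}\,b, \]
which is precisely the claim. (As a sanity check, this is the associative-algebra statement $aba = a^2 b$ when $a$ and $b$ commute, specialised to the Jordan product.)

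The only point requiring care — and the step I would cite rather than re-derive — is the passage from $[T_a,T_b]=0$ to the existence of the associative subalgebra $W$ containing both $a$ and $b$. This is the content of the spectral argument carried out in the proof of the preceding proposition: via the exponential identity $Q_{\exp a}=\exp(2T(a))$ and the logarithm one shows (after restricting to the subalgebra where the elements are invertible, exactly as done there) that $\ln a$ and $\ln b$, and hence $a=\exp\ln a$ and $b=\exp\ln b$, generate a commutative associative algebra. Granting that, no genuine obstacle remains: the corollary is simply the observation that associativity forces $2T_a^2 - T_{a^2}$ to agree with $T_{a^2}$ on such a subalgebra.
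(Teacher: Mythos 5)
Your proof is correct, but it takes a mildly different---and heavier---route than the paper for the one step that actually uses the hypothesis. The paper, having reduced $[Q_a,Q_b]=0$ to $[T_a,T_b]=0$ via the preceding proposition, simply evaluates the operator identity at the unit: $Q_ab = 2T_a^2T_b1 - T_{a^2}T_b1 = 2T_bT_a^21 - T_bT_{a^2}1 = T_b(a^2) = a^2*b$. Equivalently, the key identity $a*(a*b)=a^2*b$ falls out in one line by applying $T_aT_b=T_bT_a$ to the element $a$ and using commutativity of $*$: $a*(a*b)=a*(b*a)=b*(a*a)=a^2*b$. You instead justify $a*(a*b)=(a*a)*b$ by placing $a$ and $b$ inside a common commutative \emph{associative} subalgebra $W$, citing the exponential/logarithm construction from the proof of the preceding proposition. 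That fact is true and your citation is fair (it is essentially the standard characterisation of operator commutativity in a Jordan algebra), but it re-imports the invertibility reduction and the spectral machinery, none of which is needed here: full associativity of a generated subalgebra is a strictly stronger statement than the single instance $a*(b*a)=b*(a*a)$ your computation actually consumes, and that instance is literally the definition of $[T_a,T_b]=0$ evaluated at $a$. Both arguments are valid and both collapse $Q_ab=2a*(a*b)-a^2*b$ to $a^2*b$ in the same way; the paper's justification of the middle step is the more economical one.
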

\begin{proof}
	Suppose $[Q_a,Q_b]=0$, then by the previous proposition $[T_a,T_b]=0$. We then calculate
	$$Q_a b = 2 T_a^2 b - T_{a^2}b = 2T_a^2 T_b 1 - T_{a^2} T_b 1 = 2T_b T_a^2 1 - T_b T_{a^2}1 = T_b (a^2) = b*a^2 = a^2*b.$$
\end{proof}
\begin{corollary}
	If $a\commu b$ then $a\commu b^\perp$ and if also $a\commu c$ then $a\commu b+c$.
\end{corollary}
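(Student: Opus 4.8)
The plan is to translate the commutation relation $a\commu b$ into the operator identity $[T_a,T_b]=0$, where it becomes linear in its argument and hence trivial to manipulate. By the first proposition of this appendix, $a\commu b$ holds precisely when $[Q_a,Q_b]=0$, and by the proposition just proved this is in turn equivalent to $[T_a,T_b]=0$. Thus, for effects, the relation $a\commu b$ is exactly the assertion that the Jordan multiplication operators $T_a$ and $T_b$ commute in $B(V)$. The whole corollary will then reduce to the observation that complementation and addition are linear operations at the level of the $T$'s.

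Next I would exploit that $x\mapsto T_x$ is linear, which is immediate from the bilinearity of the Jordan product: $T_{\lambda x+\mu y}=\lambda T_x+\mu T_y$, and that $T_1=\id$ since $1$ is the Jordan unit. For the complement, write $T_{b^\perp}=T_{1-b}=\id-T_b$, so that $[T_a,T_{b^\perp}]=[T_a,\id]-[T_a,T_b]=-[T_a,T_b]$, which vanishes whenever $a\commu b$. Reading the two equivalences backwards then gives $[Q_a,Q_{b^\perp}]=0$ and hence $a\commu b^\perp$.

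For the sum, I would note first that $b+c$ (when it is defined) is again an effect, so that the equivalences of the two cited propositions apply to it as well. Assuming $a\commu b$ and $a\commu c$ we have $[T_a,T_b]=0$ and $[T_a,T_c]=0$, and linearity gives $T_{b+c}=T_b+T_c$, whence $[T_a,T_{b+c}]=[T_a,T_b]+[T_a,T_c]=0$. Running the chain of equivalences in reverse yields $a\commu(b+c)$, which completes the verification of the commutation clauses of the sequential effect algebra axioms for the Jordan product $a\mult b=Q_{\sqrt a}b$.

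The sole genuine content here has already been supplied by the preceding proposition, namely the bridge $[Q_a,Q_b]=0\iff[T_a,T_b]=0$ from the (nonlinear) quadratic representation to the (linear) multiplication operator; once that bridge is in place both claims follow purely from linearity of $x\mapsto T_x$. I therefore expect no real obstacle, the only point requiring care being that $b^\perp$ and $b+c$ are themselves effects, so that the cited equivalences may legitimately be invoked for them.
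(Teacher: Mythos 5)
Your proof is correct and follows essentially the same route as the paper: both reduce $a\commu b$ to the operator identity $[T_a,T_b]=0$ via the two preceding propositions and then use linearity of $x\mapsto T_x$ (with $T_1=\id$) to handle $b^\perp=1-b$ and $b+c$. The only cosmetic difference is that the paper closes by invoking the corollary $Q_ab=a^2*b$ together with commutativity of $*$, whereas you run the stated biconditionals backwards; both are legitimate.
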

\begin{proof}
	We have seen that $a\commu b$ implies that $[Q_a,Q_b]=0$ which in turn implies that $[T_a,T_b]=0$. It then immediately follows that $[T_a,T_{b^\perp}]=0$ so that $a\mult b^\perp = a*b^\perp = b^\perp*a = b^\perp \mult a$ by the previous corollary. The same trick can be used to conclude that $a\commu (b+c)$ when $a\commu c$.
\end{proof}

\begin{theorem}
	Let $V$ be a Euclidean Jordan algebra and let $a\mult b = Q_{\sqrt{a}}b$, for positive $a$ and $b$, then the unit interval of $V$ is a convex $\sigma$-sequential effect algebra.
\end{theorem}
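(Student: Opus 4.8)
The plan is to verify the axioms of a convex $\sigma$-SEA one at a time, harvesting the facts assembled earlier in this appendix. Most of the structure is already in place for free: the unit interval of an EJA is a convex effect algebra, and since an EJA is monotone complete it is a convex $\sigma$-effect algebra. Hence the only thing to check is that the operation $a\mult b = Q_{\sqrt a}b$ satisfies the five axioms of a sequential effect algebra together with the two normality conditions of a $\sigma$-SEA.

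For the five SEA axioms I would argue as follows. Additivity in the second argument, $a\mult(b+c)=a\mult b + a\mult c$, is immediate from the linearity of the fixed operator $Q_{\sqrt a}$; since $Q_{\sqrt a}$ is positive and $Q_{\sqrt a}1=a\leq 1$, every element occurring is again an effect, so the partial addition is respected. The unit law $1\mult a = a$ and the orthogonality axiom $a\mult b = 0\Rightarrow b\mult a = 0$ are precisely the first two bullet points at the head of this appendix (the second resting on the lemma of Alfsen and Shultz). The associativity-under-commutation part of the fourth axiom, $a\commu b\Rightarrow a\mult(b\mult c)=(a\mult b)\mult c$, and the compatibility-closure part of the fifth axiom, $\big(c\commu a \ \text{and}\ c\commu b\big)\Rightarrow c\commu(a\mult b)$, are exactly the corollary deduced from the characterisation $a\commu b\iff[Q_a,Q_b]=0$. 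The remaining implications $a\commu b\Rightarrow a\commu b^\perp$ and $\big(c\commu a\ \text{and}\ c\commu b\big)\Rightarrow c\commu(a+b)$ form the last corollary above, obtained by passing to the Jordan picture via $[Q_a,Q_b]=0\iff[T_a,T_b]=0$ and exploiting the linearity of $c\mapsto T_c$. This settles all five SEA axioms.

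It remains to check the two $\sigma$-conditions. The first, $b\mult\wedge a_i = \wedge(b\mult a_i)$ for a decreasing sequence $a_1\geq a_2\geq\cdots$, is nothing but the normality of the fixed operator $Q_{\sqrt b}$ recorded in the third bullet point. The genuinely delicate step, which I expect to be the main obstacle, is the second condition: if $b\commu a_i$ for all $i$ then $b\commu\wedge a_i$. Here the per-index commutation must be carried across a monotone limit sitting in the \emph{subscript} of the multiplication operator, where $a\mapsto Q_{\sqrt a}$ is neither linear nor monotone, so the naive route $(\wedge a_i)\mult b = \wedge(a_i\mult b)$ is unavailable. My plan is to pass to the linear picture: by the characterisation above $b\commu a_i\iff[T_b,T_{a_i}]=0$, and each Jordan multiplication $T_a$ is itself normal, a consequence of the normality of the quadratic maps through the polarisation identity $T_a=\tfrac12\left(Q_{a+1}-Q_a-\id\right)$. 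Writing $a:=\wedge a_i$, the monotone convergence $a_i\downarrow a$ then forces $T_{a_i}\to T_a$ in the relevant normal ($\sigma$-weak) sense, and applying the likewise normal operator $T_b$ on either side lets me pass each identity $[T_b,T_{a_i}]=0$ to the limit, yielding $[T_b,T_a]=0$, i.e. $b\commu\wedge a_i$. (Alternatively, using the convention stated after the definition of a $\sigma$-SEA that only infima of mutually commuting families are required, one may restrict to the commutative, hence associative, subalgebra generated by $b$ and the $a_i$, where the claim is trivial.) With both $\sigma$-conditions established, all requirements of a convex $\sigma$-SEA are met and the theorem follows.
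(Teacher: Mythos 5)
Your proposal is correct and follows essentially the same route as the paper, whose own proof simply collects the propositions established earlier in the appendix (unit law, orthogonality via Alfsen--Shultz, normality of $Q_a$, and the compatibility corollaries derived from $a\commu b\iff[Q_a,Q_b]=0\iff[T_a,T_b]=0$). You are in fact more careful than the paper on one point: the condition that $b\commu a_i$ for all $i$ implies $b\commu\wedge a_i$ is not explicitly covered by any of the preceding propositions, and either of your two suggested arguments (passing $[T_b,T_{a_i}]=0$ to the limit via the polarisation identity $T_a=\tfrac12(Q_{a+1}-Q_a-\id)$, or restricting to the commutative subalgebra generated by $b$ and the $a_i$ as licensed by the paper's convention on infima of commuting families) plugs that small gap.
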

\begin{proof}
	All the necessary properties have been shown in the previous propositions and corollaries.
\end{proof}

\section{Results concerning basically disconnected spaces}
\begin{proposition}
	Let $X$ be basically disconnected, then the set of clopens forms an $\omega$-complete lattice, that is, it has countable joins and meets.
\end{proposition}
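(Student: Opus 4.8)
The plan is to observe first that the clopen subsets of any space already form a Boolean algebra under finite unions, finite intersections and complementation, so the only thing that requires the hypothesis of basic disconnectedness is the existence of \emph{countable} joins and meets. Moreover, since complementation is an order-reversing bijection of the clopens onto themselves, it suffices to construct countable joins: the meet of a sequence $(U_i)$ is then obtained by the De Morgan formula $\bigwedge_i U_i = X\setminus \bigvee_i (X\setminus U_i)$. So the whole proposition reduces to producing, for an arbitrary sequence $U_1,U_2,\ldots$ of clopen sets, a least upper bound inside the lattice of clopens. The natural candidate is $\cl{\bigcup_i U_i}$, and the heart of the argument will be to show that this closure is again clopen.

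The key step is to recognise $\bigcup_i U_i$ as a cozero-set. Each $U_i$ is clopen, so its characteristic function $\chi_{U_i}:X\to[0,1]$ is continuous, and I would set $f = \sum_i 2^{-i}\chi_{U_i}$. This series converges uniformly (its tails are bounded by $\sum_{i>N}2^{-i}$), hence $f$ is a continuous function $X\to[0,1]$, and by construction $f(x)>0$ precisely when $x\in U_i$ for some $i$, i.e.\ $f^{-1}((0,\infty)) = \bigcup_i U_i$. Thus $\bigcup_i U_i$ is a cozero-set, and by the very definition of a basically disconnected space the closure of a cozero-set is clopen. Therefore $\cl{\bigcup_i U_i}$ is clopen.

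It then remains to check that $\cl{\bigcup_i U_i}$ is the supremum in the lattice of clopens, which is immediate: each $U_i$ is contained in it, and if $W$ is any clopen set with $U_i\subseteq W$ for all $i$, then $\bigcup_i U_i\subseteq W$ and, $W$ being closed, $\cl{\bigcup_i U_i}\subseteq W$. Applying the De Morgan formula noted above yields countable meets, so the lattice is $\omega$-complete. (Alternatively one may verify meets directly: $\bigcap_i U_i$ is a countable intersection of zero-sets, hence a zero-set, so its interior is clopen by the other formulation of basic disconnectedness, and one checks it is the greatest lower bound.) The only genuinely non-formal point in the whole argument is the passage from a countable union of clopens to a cozero-set via the explicitly constructed $f$; everything else is order-theoretic bookkeeping. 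I expect this construction of $f$, together with the appeal to basic disconnectedness, to be the crux, while the verification that the candidate is the least upper bound is routine.
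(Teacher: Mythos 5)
Your proof is correct, and it reaches the same overall shape as the paper's (establish one of the two countable operations directly via basic disconnectedness, then get the other by De Morgan duality through complementation), but it differs in which operation is constructed directly and, more importantly, in the key technical lemma. The paper handles the \emph{meet} first: it observes that $\bigcap_i S_i$ is a closed, hence compact, $G_\delta$ set, invokes the fact that in a regular (compact Hausdorff) space every compact $G_\delta$ is a zero-set, and then applies the ``interior of a zero-set is clopen'' form of basic disconnectedness; the join $\cl{\bigcup_i S_i}$ is then obtained by duality. You instead handle the \emph{join} first, exhibiting $\bigcup_i U_i$ as a cozero-set by the explicit uniformly convergent series $f=\sum_i 2^{-i}\chi_{U_i}$, and then apply the ``closure of a cozero-set is clopen'' form of the definition. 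Your route is the more self-contained of the two: the uniform convergence argument is completely elementary, whereas the paper's appeal to ``compact $G_\delta$ implies zero-set'' quietly uses a Urysohn-type normality argument. (Your parenthetical alternative for meets --- a countable intersection of zero-sets is a zero-set --- is also correct and is really the same series trick in dual form.) The verification that the candidate is the least upper bound, and the order-theoretic bookkeeping around complementation, match the paper's and are unproblematic.
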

\begin{proof}
	We first construct the join.
	Let $\{S_i\}$ be some countable collection of clopens. Let $S=\bigcap S_i$. Since all the $S_i$ are open, $S$ is a $G_\delta$ set and because all the $S_i$ are closed, $S$ is also closed. The set $S$ is then compact because it is a closed subset of the compact $X$. In a regular space (which any compact Haussdorff space is) any compact $G_\delta$ set is a zero-set, and because $X$ is basically disconnected, we see that the interior of $S$ must then be clopen. The claim is that $S^\circ$ is the greatest lower bound of the $S_i$. That it is a lower bound follows by construction. Suppose there is some $U$ that is also a lower bound, then $U$ must be clopen and satisfy $U\subseteq \bigcap S_i = S$. Taking the interior on both sides and using that $U$ is clopen we see immediately that indeed $U\subseteq S^\circ$.

	By taking the De Morgan dual it can be shown that the meet of $\{S_i\}$ is given by $\cl{\bigcup S_i}$
\end{proof}

\begin{proposition}
	If $X$ is basically disconnected, then $C(X)$ is $\sigma$-Dedekind complete: that is, any countable bounded set of continuous functions has a supremum.
\end{proposition}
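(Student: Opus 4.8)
The plan is to reduce to an increasing sequence and then to build the supremum explicitly from a decreasing family of clopen sets, exactly the objects made available by the previous proposition. First I would replace an arbitrary bounded countable family $\{f_n\}$ by its partial maxima $F_n = \max(f_1,\dots,f_n)$, which are continuous, increasing, and have the same continuous upper bounds and the same pointwise supremum as the original family; after an affine rescaling I may assume $0\le F_n\le 1$. Writing $f(x)=\sup_n F_n(x)$ for the (in general merely lower semicontinuous) pointwise supremum, the goal becomes to produce the least continuous function dominating all the $F_n$.

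The central construction is the following family of clopen sets. For each $t\in\R$ put $A_t := \cl{\{x : f(x) > t\}}$. Here $\{f>t\}=\bigcup_n\{F_n>t\}$ is a countable union of cozero-sets, hence itself a cozero-set (take $\sum_n 2^{-n} g_n$, where each $g_n\geq 0$ is a continuous witness with cozero-set $\{F_n>t\}$), so by basic disconnectedness its closure $A_t$ is clopen. The sets $A_t$ decrease in $t$, with $A_t=X$ for $t<0$ and $A_t=\emptyset$ for $t\ge 1$. I then define $g(x):=\sup\{t : x\in A_t\}$, which automatically satisfies $0\le g\le 1$.

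Three things then remain to check, and these are routine once the clopen family is in hand. For continuity of $g$: for every $t$ one has $\{g>t\}=\bigcup_{s>t}A_s$, a union of clopens and hence open, while $\{g\ge t\}=\bigcap_{s<t}A_s$, an intersection of clopens and hence closed (both equalities use that the $A_s$ decrease); so $g$ is both lower and upper semicontinuous, i.e.\ continuous. For the upper bound property: if $F_n(x)=c$ then $x\in\{F_n>t\}\subseteq A_t$ for every $t<c$, whence $g(x)\ge c$, so $g\ge F_n$ for all $n$. For leastness: given any continuous $h\ge F_n$ for all $n$ (equivalently $h\ge f$ pointwise), suppose $g(x_0)>h(x_0)$ and pick $t$ strictly between the two values; then $x_0\in A_t=\cl{\{f>t\}}$ while $\{h<t\}$ is an open neighbourhood of $x_0$, so there is a $y$ with $h(y)<t<f(y)$, contradicting $h\ge f$. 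Hence $h\ge g$, and $g$ is the supremum in $C(X)$.

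The main obstacle is the clopenness of the sets $A_t$: this is the one place where basic disconnectedness is genuinely used, and it is also why the reduction to a \emph{sequence} (rather than an arbitrary bounded family) is essential — it is precisely the countability that turns $\{f>t\}$ into a cozero-set, so that its closure is forced to be open. Everything after that is bookkeeping with a monotone family of clopen sets.
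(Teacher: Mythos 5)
Your proof is correct and follows essentially the same route as the paper's: both construct the clopen sets $A_t=\cl{\{f>t\}}$ (the paper's $U_s$), define the candidate supremum as $\sup\{t : x\in A_t\}$, and verify continuity by checking that the preimages of $(t,\infty)$ and $[0,t)$ are open. The only cosmetic differences are your reduction to an increasing sequence and your direct argument that a countable union of cozero-sets is a cozero-set, where the paper instead invokes its preceding lemma that the closure of a countable union of clopens is clopen.
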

\begin{proof}
	We adapt the proof from the lecture notes of A. van Rooij (2011) on Riesz spaces.

	Let $\{f_i\}$ be a collection of continuous maps $f_i:X\rightarrow \R^+$ (we may without loss of generality assume that they are positive) that have some continuous upper bound $f_0$. Define $g(x):= \sup_i f_i(x)$. Now let $s>0$, then $g^{-1}((s,\infty)) = \bigcup_i f^{-1}((s,\infty))$ is open. Let $U_s=\cl{\bigcup_i f^{-1}((s,\infty))} = \cl{\bigcup_i \cl{f^{-1}((s,\infty))}}$ be a closure of a countable union of clopens so that by the previous proposition it is again a clopen set. By construction $f_0\geq s\xi_{U_s}$. Define $f_\infty(x) := \sup_s s\xi_{U_s}(x) = \sup\{s~;~ x\in U_s\}$. Again by construction $g\leq f_\infty\leq f_0$ so if we show that $f_\infty$ is continuous it is an upperbound, and since $f_0$ was arbitrary it will also be the least upper bound. We note that $f_\infty^{-1}((t,\infty)) = \bigcup_{s>t} U_s$ which is a union of open sets, so is open. And since $f_\infty(x)<s \implies x\not\in U_s\implies f_\infty(x)\leq s$ we get $f_\infty^{-1}([0,t)) = \bigcup_{s<t}X\backslash U_s$ which is again a union of open sets so is open. Since the inverse images of $f_\infty$ on a basis of $\R^+$ are open, it is indeed continuous.
\end{proof}

\begin{proposition}
	Let $E$ be a commutative SES, then any $a\in E$ is the supremum and norm limit of an increasing sequence of effects of the form $\sum_{k=1}^n \lambda_i p_i$ where $\lambda_i>0$ and the $p_i$ are orthogonal sharp effects.
\end{proposition}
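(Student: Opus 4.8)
The plan is to reduce to the concrete model. By Theorem \ref{theor:commbasicallydisconnect} a commutative SES is isomorphic to the unit interval of $C(X)$ for some basically disconnected $X$, so it suffices to approximate an arbitrary continuous effect $a:X\to[0,1]$ by an increasing sequence of simple effects. The guiding idea is the usual approximation of a function by step functions, but with the crucial requirement that the ``steps'' be sharp, i.e.\ characteristic functions of clopen sets (Proposition \ref{prop:sharpclopen}).

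First I would build the right clopen sets. For $s\in[0,1)$ set $U_s := \cl{a^{-1}((s,1])}$, the closure of the cozero-set where $a$ exceeds $s$. Because $X$ is basically disconnected, each $U_s$ is clopen, so $\chi_{U_s}$ is a sharp effect. Two facts drive everything: the family $\{U_s\}$ is decreasing in $s$ (closures of nested open sets), and by continuity $a^{-1}((s,1])\subseteq U_s\subseteq a^{-1}([s,1])$, so $a(x)>s$ forces $x\in U_s$ while $x\in U_s$ forces $a(x)\ge s$.

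Next I would define the approximants. For each $n$ set $a_n := 2^{-n}\sum_{k=1}^{2^n}\chi_{U_{k/2^n}}$. Rewriting this over the orthogonal differences $D_k := U_{(k-1)/2^n}\setminus U_{k/2^n}$ (clopen, as differences of nested clopens, hence pairwise disjoint) gives $a_n=\sum_{k=2}^{2^n}\tfrac{k-1}{2^n}\chi_{D_k}$, which is exactly an effect of the required form $\sum_i\lambda_i p_i$ with $\lambda_i>0$ and the $p_i=\chi_{D_k}$ orthogonal sharp. Three estimates then finish the job. Monotonicity $a_n\le a_{n+1}$ follows by splitting the level-$(n+1)$ sum into even and odd indices and using $U_{(2k-1)/2^{n+1}}\supseteq U_{2k/2^{n+1}}$. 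The sandwich inclusion above gives the pointwise bounds $0\le a(x)-a_n(x)\le 2^{-n}$ at every $x$, whence $\norm{a-a_n}\le 2^{-n}$ and $a_n\to a$ in norm. Finally, since the $a_n$ increase, lie below $a$, and converge to $a$, any upper bound $b$ of $\{a_n\}$ satisfies $a-b\le a-a_n\le 2^{-n}1$ for all $n$, so the Archimedean property (Proposition \ref{prop:normalsum}) forces $b\ge a$; thus $a=\vee a_n$ is the supremum.

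The main obstacle is conceptual rather than computational: the naive level sets $a^{-1}((s,1])$ and $a^{-1}([s,1])$ need not be clopen, so their characteristic functions are not sharp and cannot serve as the $p_i$. The whole argument hinges on replacing them by the closures $U_s$ and invoking basic disconnectedness to guarantee these are clopen; the sandwich inclusion $a^{-1}((s,1])\subseteq U_s\subseteq a^{-1}([s,1])$ is then exactly what keeps the approximation both below $a$ and uniformly close to it. Everything else is routine bookkeeping with nested clopen sets.
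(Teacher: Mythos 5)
Your proposal is correct and follows essentially the same route as the paper's proof: both approximate $a$ from below by $2^{-n}\sum_k \chi_{\cl{a^{-1}((k/2^n,1])}}$, using basic disconnectedness to make these superlevel-set closures clopen (hence sharp), and both convert the nested sum into a combination of orthogonal sharp effects via successive differences. The only cosmetic difference is that you work with dyadic levels from the outset (and give a slightly sharper error bound $2^{-n}$ and a more explicit supremum argument), whereas the paper defines $q_n$ for all $n$ and then passes to the subsequence $q_{2^i}$ to obtain monotonicity.
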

\begin{proof}
	The effect $E$ corresponds to the unit interval of a $C(X)$ where $X$ is a basically disconnected compact Haussdorff space, so an effect $a$ can be represented by a continuous function $a:X\rightarrow [0,1]$. The sharp effects correspond to clopens in $X$. Write $p_n^k$ for the characteristic function of the clopen set $\cl{a^{-1}((\frac{k}{n},1])}$, so that we have the implication $p_n^k(x)=1 \implies a(x)\geq \frac{k}{n}$. Of course $p_n^k\geq p_n^{k+1}$. Define $q_n:= \sum_{k=1}^n \frac{1}{n}p_n^k$, then $q_n$ only takes values $\frac{l}{n}$ for some $0\leq l\leq n$ and we obviously have $q_n(x)=\frac{l}{n} \iff p_n^l(x)=1 \implies a(x)\geq \frac{l}{n}$ so that $q_n\leq a$.

	In general $q_n$ and $q_m$ might not have an obvious order relation, but if we consider $q_n$ and $q_{2n}$ then it is straightforward to check that the latter will always be larger. The sequence $(q_{2^i})$ is therefore an increasing sequence that has $a$ as an upper bound. We will show that $\norm{a-q_{2^n}}\leq 2^{1-n}$ so that this sequence indeed converges to $a$. As it is an increasing sequence, $a$ will then also be its supremum.

	For every $x\in X$ there exists a $0\leq l\leq 2^n$ such that $\frac{l}{2^n}\leq a(x)< \frac{l+1}{2^n}$. We then always have either $q_{2^n}(x)=\frac{l-1}{2^n}$ or $q_{2^n}(x)=\frac{l}{2^n}$. In both cases $a(x)-q_{2^n}(x) \leq \frac{2}{2^n}=2^{1-n}$. Since this bound does not depend on $x$ or $l$, we indeed have $\norm{a-q_{2^n}} \leq 2^{1-n}$.

	The last thing we have to show is that we can write $q_n$ as a linear combination of orthogonal sharp effects, but this is easily done by writing $q_n = \sum_{k=1}^n \frac{1}{n} p_n^k$ as $q_n = \sum_{k=1}^n \frac{k}{n} (p_n^k - p_n^{k+1})$.
\end{proof}

\end{document}